\documentclass[a4paper,12pt,thmsa]{article}

\usepackage{xcolor}
\usepackage{amsmath}
\usepackage{amsthm}
\usepackage{amsfonts}
\usepackage{amssymb}
\usepackage{enumerate}
\usepackage{dsfont}
\usepackage{graphicx}
\usepackage{xr}
\usepackage{bezier}
\usepackage{tikz}
\usepackage{epstopdf}
\usepackage{geometry}
\usepackage{stmaryrd}
\usepackage{bbm}
\usepackage[normalem]{ulem}
\usepackage[colorlinks=true,linkcolor=blue,urlcolor=blue,citecolor=red, hyperfigures=false]{hyperref}

\textheight=24cm
\oddsidemargin=-2.8mm
\topmargin=-1.3cm
\textwidth=16.5cm

\makeatletter

\renewcommand{\paragraph}{\@startsection{paragraph}{4}{0mm}{-3mm}{-3mm} {\noindent \bf}}

\renewcommand{\subparagraph}{\@startsection{paragraph}{4}{3mm}{-3mm}{-3mm} {\noindent \bf}}

\newcommand{\subsubparagraph}{\@startsection{paragraph}{4}{3mm}{-3mm}{-3mm} {\noindent \it}}

\let\inf\relax \DeclareMathOperator*\inf{\vphantom{p}inf}
\let\max\relax \DeclareMathOperator*\max{\vphantom{p}max}
\let\min\relax \DeclareMathOperator*\min{\vphantom{p}min}

\numberwithin{equation}{section}
\numberwithin{figure}{section}
\newtheorem{theorem}{Theorem}[section]
\theoremstyle{plain}

\newtheorem{corollary}[theorem]{Corollary}

\newtheorem{definition}[theorem]{Definition}

\newtheorem{lemma}[theorem]{Lemma}

\newtheorem{proposition}[theorem]{Proposition}
\newtheorem{remark}[theorem]{Remark}

\DeclareFontFamily{U}  {MnSymbolF}{}
\DeclareSymbolFont{symbolsMN}{U}{MnSymbolF}{m}{n}
\SetSymbolFont{symbolsMN}{bold}{U}{MnSymbolF}{b}{n}
\DeclareFontShape{U}{MnSymbolF}{m}{n}{
    <-6>  MnSymbolF5
   <6-7>  MnSymbolF6
   <7-8>  MnSymbolF7
   <8-9>  MnSymbolF8
   <9-10> MnSymbolF9
  <10-12> MnSymbolF10
  <12->   MnSymbolF12}{}
\DeclareFontShape{U}{MnSymbolF}{b}{n}{
    <-6>  MnSymbolF-Bold5
   <6-7>  MnSymbolF-Bold6
   <7-8>  MnSymbolF-Bold7
   <8-9>  MnSymbolF-Bold8
   <9-10> MnSymbolF-Bold9
  <10-12> MnSymbolF-Bold10
  <12->   MnSymbolF-Bold12}{}
\DeclareMathSymbol{\tbigtimes}{\mathop}{symbolsMN}{2}
\newcommand*{\bigtimes}{%
  \DOTSB
  \tbigtimes
  \slimits@
}

\newcommand{\be}{\begin{equation}}
\newcommand{\ee}{\end{equation}}

\newcommand{\RR}{{\mathbb{R}}}

\newcommand{\EE}{{\mathbb{E}}}

\newcommand{\PP}{{\mathbb{P}}}

\newcommand{\QQ}{{\mathbb{Q}}}

\newcommand{\CC}{{\mathcal{C}}}

\newcommand{\CF}{{\mathcal{F}}}

\newcommand{\CM}{{\mathcal{M}}}

\newcommand{\CI}{{\mathcal{I}}}

\newcommand{\CT}{{\mathcal{T}}}
\newcommand{\CB}{{\mathcal{B}}}

\newcommand{\indic}{{\mathbbm{1}}}
\newcommand{\comment}[1]{}

\begin{document}

\hyphenchar\font=-1

\baselineskip=7mm

\vspace{3cm}

\author{Jean-Paul D\'ecamps\thanks{Toulouse School of Economics, University of Toulouse Capitole, Toulouse, France. E-mail: \tt{jean-paul.}  \tt{jean-paul.decamps@tse-fr.eu}.}~~~~~~~Fabien Gensbittel\thanks{Toulouse School of Economics, University of Toulouse Capitole, Toulouse, France. E-mail: \tt{fabien.} \tt{fabien.gensbittel@tse-fr.eu}.}~~~~~~~Thomas Mariotti\thanks{Toulouse School of Economics, CNRS, University of Toulouse Capitole, Toulouse, France, CEPR, and CESifo. Email: \tt{thomas.mariotti@tse-fr.eu}.}} \vspace{1cm}

\title{\textbf{Mixed Markov-Perfect Equilibria in the Continuous-Time War of Attrition}\footnote{We thank two anonymous referees for very thoughtful and detailed comments. We also thank Erik Ekstr\"{o}m for very valuable feedback. This research has benefited from financial support of the ANR (Programme d'Investissements d'Avenir CHESS ANR-17-EURE-0010) and the research foundation TSE-Partnership (Chaire March\'es des Risques et Cr\'eation de Valeur, Fondation du Risque/SCOR).}}

\vspace{1cm}
\maketitle
\vspace*{6mm}

\begin{abstract}
We prove the existence of a Markov-perfect equilibrium in randomized stopping times for a model of the war of attrition in which the underlying state variable follows a homogenous linear diffusion. We first prove that the space of Markovian randomized stopping times can be topologized as a compact absolute retract. This in turn enables us to use a powerful fixed-point theorem by Eilenberg and Montgomery \cite{EilenbergMontgomery} to prove our existence theorem. We illustrate our results with an example of a war of attrition that admits a mixed-strategy Markov-perfect equilibrium but no pure-strategy Markov-perfect equilibrium.

\bigskip

\noindent \textbf{Keywords:} War of Attrition, Markovian Randomized Stopping Time, Markov-Perfect Equilibrium, Fixed-Point Theorem.

\end{abstract}

\thispagestyle{empty}
\newpage
\setcounter{page}{1}

\section{Introduction}

{In this paper, we consider two-player nonzero-sum continuous-time stopping games in which the payoff for every player $i=1,2$ is defined by
\begin{align*}
J^i(x,\tau^i,\tau^j) : =\EE_x\! \left[\indic_{\tau^i \leq \tau^j} \,\mathrm e^{-r\tau^i}R^i(X_{\tau^i})+ \indic_{\tau^j<\tau^i} \,\mathrm e^{-r \tau^j}G^i(X_{\tau^j})\right]\hskip -1mm,
\end{align*}
where $j=3-i$ and
\begin{itemize}

\item[(i)]

$X:=(X_t)_{t \geq 0}$ is a continuous homogenous linear diffusion defined on an open interval ${\cal I} \subset \mathbb R$ and satisfying $X_0=x$;

\item[(ii)]

$\tau^i$ and $\tau^j$ are stopping times of the filtration of $X$ chosen by players $i$ and $j$, respectively;

\item [(iii)]

$r\geq 0$ is a constant discount rate;

\item[(iv)]

the reward functions $R^i$ and $G^i$ are continuous functions such that $R^i \leq G^i$ on $\mathcal I$ and whose discounted values at $X_t$ satisfy limit and integrability conditions when $t \to \infty$.

\end{itemize}
The assumption $R^i \leq G^i$ reflects a second-mover advantage for every player $i = 1,2$ and is typical in the stopping games referred to as \textit{wars of attrition} in the biology and economics literature (see \cite{DGM2, GeorgiadisKimKwon, Gieczewski, KwonPalczewski, Lambrecht, Murto} for examples of such games under Brownian uncertainty). In line with this terminology, a game satisfying assumptions (i)--(iv) is referred to in what follows as a \textit{linear Brownian war of attrition} (lBwa). The main goal of this paper is to show that any lBwa admits a (pure- or mixed-strategy) Markov-perfect equilibrium (Mpe), to be defined below. To this end, we bring to bear new topological methods for the analysis of continuous-time stopping games.

A pair of stopping times $(\tau^1 ,\tau ^2)$ is a pure-strategy Nash equilibrium for the lBwa with initial condition $x \in \CI$ if $J^i (x,\tau^i, \tau^j)= \sup_{\tau} J^i(x,\tau,\tau^j)$ for every player $i=1,2$. A pure-strategy Markov-perfect equilibrium (Mpe) is a pair of Markovian stopping times (characterized as hitting times of a closed subset of ${\cal I}$) that induces a pure-strategy Nash equilibrium for every initial condition. Markov-perfectness requires that the continuation equilibrium following any history only depend on payoff-relevant variables (Maskin and Tirole \cite{MaskinTirole}) and  rules out noncredible threats (Selten \cite{Selten}, Riedel and Steg \cite{RiedelSteg}).

The existence of pure-strategy Mpes for lBwas can be established under a variety of additional assumptions on the reward functions $R^i$ and $G^i$.
\begin{enumerate}

\item

It follows from very general results of Cattiaux and Lepeltier \cite{CattiauxLepeltier} and Lepeltier and Etourneau \cite{LepeltierEtourneau} that a pure-strategy Mpe exists under the additional assumption that the processes $(\mathrm e^{-rt}G^i(X_t))_{t \geq 0}$, $i=1,2$, are supermartingales.

\item

De Angelis, Ferrari, and Moriarty \cite{DeAFM18} prove the existence of a pure-strategy Mpe in trigger strategies under geometric conditions on the functions $R^i$, $i=1,2$. Similar results have been derived by Attard \cite{Attard17}  and Martyr and Moriarty \cite{Martyr}.

\item

Ekstr\"om and Villeneuve \cite{EkstromVilleneuve} and Ekstr\"om and Peskir \cite{EkstromPeskir} prove the existence of a pure-strategy Mpe in the zero-sum case where $R^i = - G^j$, $i=1,2$.

\end{enumerate}
In the absence of such additional assumptions, a pure-strategy Mpe may not exist---we provide a simple explicit counterexample in Section \ref{sec:example}. Notice in that respect that, according to very general results by Lepeltier and Maingueneau \cite{LepeltierMaingueneau} (in the zero-sum case) and Hamadene and Zhang \cite{HamadeneZhang} (in the nonzero-sum case), a lBwa always admits a pure-strategy Nash equilibrium. However, the strategies constructed in, e.g., \cite{HamadeneZhang} are not Markovian, and the resulting equilibrium is typically not subgame-perfect in the sense of Riedel and Steg \cite{RiedelSteg}, so that some player has to make a noncredible threat off the equilibrium path.

To recover the existence of an Mpe, we follow the classical approach in game theory: we extend the class of strategies by allowing players to use mixed strategies. In the present context, this amounts to considering randomized stopping times---whereby, loosely speaking, players choose a distribution on the set of stopping times.

Randomized stopping times have been considered for a long time in the theory of optimal stopping (see, e.g., Baxter and Chacon \cite{BaxterChacon}, Meyer \cite{Meyer}, El Karoui, Lepeltier, and Millet \cite{EKLM}) and in the analysis of stopping games (see, e.g., Touzi and Vieille \cite{TouziVieille}, Riedel and Steg \cite{RiedelSteg}, Laraki and Solan \cite{LarakiSolanZS}, Laraki and Solan \cite{LarakiSolanNZS}, De Angelis, Merkulov, and Palczewski \cite{DeAMP22}). The difference between these and the present paper is one of focus: we are ultimately interested in the existence of Mpes of lBwas. Therefore, whereas the \textit{games} we consider form a more modest class, our \textit{equilibrium concept} is more demanding. Correspondingly, instead of allowing for general randomized stopping times in possibly very general stochastic environments, our analysis focuses on Markovian randomized stopping times for continuous homogenous linear diffusions.

For this specific class of environments, D\'ecamps, Gensbittel, and Mariotti \cite{DGM2} derive from a representation result for multiplicative functionals due to Sharpe \cite{Sharpe} that any Markovian randomized stopping time can be represented by a pair $(\mu,S)$, where $S$ is a closed subset of $\CI$ and $\mu$ is a locally finite measure on $\CI\setminus S$ such that the conditional survival function $\Lambda_t$---that is, the probability to stop strictly after $t$ conditionally on $(X_s)_{s \in [0,t]}$---writes as $\Lambda_t=\indic_{t <\tau_S} \, \mathrm e^{-\int_ {\CI \setminus S} L^y_t \,\mu(\mathrm dy)} $, where $L^y_t$ is the local time of $X$ at $(y,t)$ and $\tau_S$ is the hitting time by $X$ of $S$. The set $S$ is the region of immediate stopping and $\mu$ is a (possibly singular) intensity of stopping outside of $S$. Equivalently, the pair $(\mu,S)$ can be seen as a nonnegative measure on $\CI$ that explodes on $S$. This allows one to identify the set of Markovian randomized stopping times with the set $\mathcal M(\mathcal I)$ of nonnegative (but not necessary locally finite) regular Borel measures.

Building on this representation theorem, \cite{DGM2} offers a detailed analysis of mixed-strategy Mpes of the lBwa under assumptions on reward functions that are special cases of those introduced in \cite{CattiauxLepeltier} and \cite{DeAFM18}. In this context, it can be shown that, if players are asymmetric, then all mixed-strategy Mpes of the lBwa involve strategies with discrete intensity measures of stopping whose supports consist of two intertwined sequences of randomization thresholds, generating singular conditional survival functions. These randomization thresholds and the players' equilibrium value functions are semi-explicitly characterized as the solutions to a variational system.

An open question is whether the existence of an Mpe for the lBwa can be established with no assumptions on reward functions besides assumptions (i)--(iv) above and, correspondingly, without the luxury of variational methods. The present paper answers this question in the positive by relying instead on topological methods.

To this end, the standard approach requires (A) endowing $\mathcal M(\mathcal I)$ with a convenient topological structure, and (B) checking that the best-reply correspondence is sufficiently well-behaved to apply an appropriate fixed-point theorem. However, an immediate difficulty in the implementation of this approach is that $\mathcal M(\mathcal I)$ lacks a natural convex structure, reflecting the possibility that a player may stop with infinite intensity on some subset $S$ of $\mathcal I$. This prevents us from using Kakutani's \cite{Kakutani} fixed-point theorem or its extension by Glicksberg \cite{Glicksberg} to compact convex subsets of a locally convex Hausdorff topological vector space. We show that this lack of convexity can be overcome by invoking a more powerful result, namely, a special case of the Eilenberg--Montgomery fixed-point theorem \cite[Theorem 1]{EilenbergMontgomery} that applies to (non necessarily convex) compact absolute retracts. (The topological notions involved in the statement of this theorem are recalled in Section \ref{unsurvol}.)

To carry out the two-step program (A)--(B), we establish a number of new topological properties of the set of Markovian randomized stopping times and of the best response correspondences that may be of independent interest.

(A) Our first main result, Theorem \ref{thm:compact_AR}, defines a topology $\vartheta$ on $\mathcal M(\mathcal I)$ that extends vague convergence of locally finite measures, and establishes that $(\mathcal M(\mathcal I),\vartheta)$ is a compact absolute retract. The proof consists of two steps that can be sketched as follows. (1) The first step is relatively standard and consists in showing that $(\CM (\CI),\vartheta)$ is metrizable and compact (Proposition \ref{prop:compact_metric}). As a by-product, we obtain that the topology induced by $\vartheta$ on the subset $\CM_{\rm{loc}}(\CI)\subset \mathcal M( \mathcal I)$ of locally finite measures coincides with the usual topology $\upsilon$ of vague convergence. (2) The second step is more challenging, and involves more advanced tools. By Step 1, one can identify $\mathcal M_{\rm{loc}}(\mathcal I)$ with a metrizable convex subset of the locally convex Hausdorff topological vector space of linear functionals on $\CC_c^+(\CI)$ endowed with the vague topology; this, in turn, implies that $(\CM_{\rm{loc}}(\CI), \upsilon)$ is an absolute neighborhood retract. The bulk of the proof then consists in proving that one can continuously deform any measure $m \in \CM(\CI)$ into the null measure $0 \in \CM_{\rm{loc}}(\CI)$ in such a way that, except perhaps for the initial point $m$, the whole path of this deformation is included in $\mathcal M_{\rm{loc}}(\mathcal I)$. This shows at once that $(\CM (\CI),\vartheta)$ is contractible and that it includes $(\mathcal M_{\rm{loc}}(\mathcal I),\upsilon)$ as an homotopy-dense absolute neighborhood retract (Proposition \ref{prop:contraction}). As $(\CM_{\rm{loc}}(\CI), \upsilon)$ is an absolute neighborhood retract, this in turn implies that $(\CM (\CI),\vartheta)$ is a contractible absolute neighborhood retract, i.e., an absolute retract, establishing Theorem \ref{thm:compact_AR}.

(B) To prove our second main result, Theorem \ref{thm:main}, which establishes that any lBwa admits an Mpe, we first show that, if player $j$ stops according to a Markovian randomised stopping time $m^j \in \CM(\CI)$, then player $i$ has a pure-strategy best reply that consists in a Markovian stopping time (Proposition \ref{prop:existence_pbr}). This best reply is perfect in that it remains optimal regardless of the initial condition. We further characterize the set of perfect best replies in $\CM(\CI)$ to player $j$'s Markovian randomized stopping time $m^j$ (Proposition \ref{prop:cacrt_pbr}), and, among those, the set of pure-strategy best replies, whose associated optimal stopping regions form a lattice (Lemma \ref{lem:sigma_i}). This part of our analysis falls in the domain of the general theory of optimal stopping. We next introduce a correspondence $\Phi$ whose values are subsets of the perfect best-reply correspondence and that is more amenable to topological methods than the latter. Our key results about $\Phi$ is that it has a closed graph (Proposition \ref{prop:closed_graph}) and contractible values (Proposition \ref{prop:contractible_values}), from which the existence of an Mpe follows by a direct application of the Eilenberg--Montgomery theorem, establishing Theorem \ref{thm:main}. The intermediary results proven along the way have independent interest for a class of optimal stopping problems that can be studied in its own right, in which the decision maker must choose an optimal stopping time anticipating that his reward function may jump upward according to a Markovian randomized stopping time. Special cases of such models have been considered in the real- option literature on investment under technological and cash-flow uncertainty (see, e.g., \cite{ChronopoulosSiddiqui, HuismanKort, Murto}), in which good news, in the form of technological breakthroughs, arise with constant intensity. Our analysis of perfect best replies generalizes the insights of that literature to a state-dependent intensity of technological breakthroughs. It notably entails that the lattice of optimal investment regions varies upper hemicontinuously (in the Painlev\'e--Kuratowski sense) with respect to the measure $m \in \CM(\CI)$ associated with a Markovian randomized breakthrough time (Corollary \ref{breakthrough}).\footnote{It should be noted that not every optimal stopping model with good news fits into our framework. Such is for instance the case of the incomplete-information model of D\'ecamps, Gensbittel, and Mariotti \cite{DGM1}, in which good news arise when $X$ exceeds a threshold value whose value is ex-ante unknown to the decision maker. The corresponding randomized stopping time is not Markovian, reflecting that the decision maker learns about the unknown threshold value over time. As a result, the appropriate state variable for optimal stopping is $X$ together with its running maximum.}

We hope that our general approach can be extended to cover a wider range of games and environments. This issue is discussed at the end of the paper, where a generalization of Theorem \ref{thm:main} to more than two players is presented in Theorem \ref{pg}.

In contemporaneous independent work, Christensen and Schultz \cite{ChristensenShultz} derive an analogous existence theorem for Mpes of lBwas using a different method. They first consider a family of auxiliary games in which the players are only allowed to stop over increasingly finer finite subsets of the state space $\CI$. In these discretized games, the best-reply sets are convex and the existence of an Mpe can be directly proved using Kakutani's fixed-point theorem \cite{Kakutani}. The existence of an Mpe for the primary game is then obtained as the limit of a convergent sequence of Mpes of these auxiliary games. The analysis requires the introduction of two distinct topologies. The first one allows one to use Kakutani's fixed-point theorem in the auxiliary discretized games. The second, based on the distribution of stopped processes, defines an appropriate notion of convergence allowing one to pass from a sequence of Mpes of the discretized games to an Mpe of the primary game.

By using the Eilenberg-Montgomery fixed-point theorem, our approach is more direct, avoids convexity issues, and only requires us to define a natural topology under which the set of Markovian randomized stopping times, identified to $\CM(\CI)$, is a compact absolute retract. Interestingly, when the discretization is locally finite, the topology used in the auxiliary games introduced by \cite{ChristensenShultz} actually corresponds to the restriction to a finite or countable subspace of the state space of the topology we define on $\CM(\CI)$.

To the best of our knowledge, \cite{ChristensenShultz} and the present paper are the only papers proving the existence of an Mpe in the lBwa under weak assumptions on the reward functions. Our approach and that in \cite{ChristensenShultz} are complementary in that the method of proof in the latter paper enables one to show that some Mpe of the continuous-time game can be obtained as the limit of a sequence of Mpes of suitably discretized games. Whether this is the case of all Mpes remains an open question.}

\section{Model and Main Results}\label{sec:model}

\subsection{A Brownian Model of the War of Attrition}

Consider a one-dimensional time-homogeneous diffusion process $X := (X_t)_{t \geq 0}$ defined on the canonical space $(\Omega,\CF,\mathbb P_x)$ of continuous trajectories with $X_0=x$ under $\mathbb P_x$, that is solution in law to the SDE
\begin{align} \label{eq1}
\mathrm dX_t = b(X_t) \, \mathrm dt  \, + \, \sigma(X_t) \, \mathrm dW_t, \quad t \geq 0,
\end{align}
driven by some Brownian motion $W := (W_t)_{t \geq 0}$. The state space for $X$ is an interval $\mathcal I := (\alpha, \beta)$, with $- \infty \leq \alpha < \beta \leq \infty$, and $b$ and $\sigma$ are continuous functions, with $\sigma>0$ on $\mathcal I$. We assume that $\alpha$ and $\beta$ are natural endpoints for the diffusion. Therefore, $X$ is regular on $\mathcal I$ and the SDE \eqref{eq1} admits a weak solution that is unique in law.

The process $X$ is defined on the canonical space $(\Omega,\mathcal F)$ of continuous trajectories endowed with the usual family of shift operators $(\theta_t)_{t \geq 0}$. Let $\mathbb P_\mu$ be the law of the process $X$ with initial distribution $\mu \in \Delta(\mathcal I)$, where $\Delta(\mathcal I)$ is the space of probability measures on the Borel $\sigma$-field $\mathcal B(\mathcal I)$. We denote by $(\CF_t^0)_{t \geq 0}$ the natural filtration $(\sigma( X_s; s \leq t))_{t \geq 0}$ generated by $X$, and we let $\CF_\infty^0 := \sigma(\bigcup_{t \geq 0} \mathcal F_t^0)$. For each $\mu$, we denote by $\CF_\infty^\mu$ the completion of $\CF _\infty^0$ with respect to $\mathbb P_\mu$, and, for each $t\geq 0$, we let $\CF_t^\mu$ be the augmentation of $\CF_t^0$ by the $\mathbb P_\mu$-null, $\CF_\infty ^\mu$-measurable sets. The usual augmented filtration $(\CF_t)_{t \geq 0}$ is then defined by $\CF_t := \bigcap_{\mu\in \Delta(\mathcal I)} \CF_t^\mu$ for all $t\geq 0$ and it is right-continuous (see, e.g., \cite[Chapter III, \S2, Proposition 2.10]{RevuzYor}). As usual, we say that a property of the trajectories $\omega \in \Omega$ is satisfied a.s.\! if, for each $x\in \CI$, it is satisfied for $\mathbb P_x$-a.e.\! $\omega \in \Omega$.

The game is played as follows. Player 1 chooses a stopping  time $\tau ^1$ and player 2 chooses a stopping time $\tau^ 2$ in the set $\CT$ of all stopping times of $(\CF_t)_{t \geq 0}$. Both players discount future payoffs at a constant rate $r \geq 0$. For each $i =1,2 $, the expected payoff of player $i$ is\footnote{By convention, we let $\mathrm e^{-r \tau}f(X_{ \tau}) := 0$ on $\{\tau = \infty\}$ for any Borel function $f$ and any random time $\tau$, see Assumption A2 below.}
\begin{align}
\label{core} J^i(x,\tau^i,\tau^j) : = \EE _x\! \left[\indic_{\tau^i \leq \tau^j} \,\mathrm e^{-r \tau^i}R^i (X_{\tau^i}) + \indic_{\tau^i>\tau^j}\, \mathrm e^{-r \tau^j} G^i(X_{\tau^j}) \right]\hskip -1mm .
\end{align}
For each $i=1,2,$ we assume
\begin{itemize}
\item[\bf A0] The functions $R^i$ and $G^i$ in (\ref{core}) are continuous on ${\cal I}$ and $R^i \leq G^i$.
\end{itemize}
For each $i = 1, 2$ and every function $f=R^i,G^i$, we assume
\begin{itemize}
\item[\bf A1]
$\mathbb E _x \hskip 0.3mm  [ \sup_{t \geq 0} \mathrm e^{-rt} | f(X_t) | ] < \infty$ for all $x \in \mathcal I$.
\item[\bf A2]
$\lim_{t \to \infty} \mathrm e^{-rt} f(X_t) = 0$ a.s.
\end{itemize}
Assumption A1 guarantees that the family $(\mathrm e^{-r  \tau} f(X_{ \tau}))_{\tau \in {\cal T}}$  is uniformly integrable, that is, the process $(\mathrm e^{-r t} f(X_{t}))_{t \geq 0}$ is of class (D).

A game satisfying the above assumptions is hereafter generically referred to as a lBwa.

\subsection{Randomized Stopping Times}

In this section, we briefly recall some definitions and results that are standard in the literature; we refer to \cite{DGM2} for the missing proofs. For every player $i= 1,2$, consider the enlarged probability space $(\Omega ^i ,\mathcal F^i) := (\Omega\times [0,1], \mathcal F \otimes \mathcal B([0,1]))$, endowed with the product probability $\mathbb P^i_x := \mathbb P_x \otimes \lambda$, where $\lambda$ denotes Lebesgue measure.

\begin{definition}\label{def:randomized_stopping_time}
For each $i=1,2,$ a randomized stopping time for player $i$ is an $\CF \otimes \CB([0,1])$- measurable function $\gamma^i: \Omega^i \to \mathbb R_+$ such that$,$ for $\lambda$-a.e.\! $u^i\in [0,1],$ $\gamma^i(\cdot,u^i) \in \CT$. The process $\Gamma^i := (\Gamma^i_t)_{t \geq 0}$ defined by
\begin{align} \label{ccdf'}
\Gamma^i_t (\omega) := \int_{[0,1]} \indic_{\gamma^i(\omega,u^i) \leq t} \, \mathrm du^i, \quad (\omega,t) \in \Omega \times \mathbb R_+,
\end{align}
is the conditional cumulative distribution function (ccdf) of the randomized stopping time $\gamma^i$. Likewise$,$ the process $\Lambda^i := (\Lambda^i_t)_{t \geq 0}$ defined by
\begin{align*}
\Lambda^i_t(\omega) := 1- \Gamma^i_t(\omega) , \quad (\omega,t) \in \Omega \times \mathbb R_+,
\end{align*}
is the conditional survival function (csf) of the randomized stopping time $\gamma^i$.
\end{definition}

We denote by $\CT_r$ the set of randomized stopping times. The process $\Gamma^i$ defined by (\ref{ccdf'}) takes values in $[0,1]$ and has nondecreasing and right-continuous trajectories.

\begin{lemma}[{\cite[Lemma 2]{DGM2}}]\label{lemma_properties_lambda}
The ccdf process $\Gamma^i$ is $(\CF_t)_{t\geq 0}$-adapted and$,$ for $\mathbb P_x$-a.e.\! $\omega \in \Omega,$
\begin{align*}
\Gamma^i_t (\omega)= \mathbb P^i_x \hskip 0.3mm [\gamma^i \leq t \! \mid \! \CF_t] (\omega)
\end{align*}
for all $x\in \CI$ and $t \geq 0$.
\end{lemma}

By convention, we let $\Gamma^i_{0-}:= 0$ and thus $\Lambda^i_{0-}:=1$. This allows us in what follows to interpret integrals of the form $\int_{[0, \tau)} \cdot \, \mathrm d \Gamma^i_t$ or $\int_{[0, \tau)} \cdot \, \mathrm d \Lambda^i_t$ in the Stieltjes sense for any ccdf $\Gamma^i$ and any csf $\Lambda^i$. Notice for further reference that, for any sufficiently integrable process $Z$,
\begin{align*}
\int_{[0,\tau)} Z_s \, \mathrm d\Gamma^i_s = \Gamma^i_0 Z_0 +\int_{(0,\tau)}Z_s \, \mathrm d\Gamma^i_s .
\end{align*}
If the players use randomized stopping times $\gamma^1$ and $\gamma^2$, then their expected payoffs are defined on the product probability space $\Omega\times [0,1] \times [0,1]$ with canonical element $(\omega,u^1,u^2)$, endowed with the product probability $\overline{\mathbb P}_x := \mathbb P_x \otimes \lambda \otimes \lambda$. Specifically, we have
\begin{align*}
J^i(x,\gamma^1,\gamma^2) := \overline {\mathbb E }_x\! \left[\indic_{\gamma^i\leq \gamma^j} \,\mathrm e^{-r \gamma^i}R^i (X_{\gamma^i}) + \indic_{\gamma^i>\gamma^j}\, \mathrm e^{-r \gamma^j} G^i(X_{\gamma^j}) \right]\hskip -1mm,
\end{align*}
where $\gamma^1 :=\gamma^1(\omega,u^1)$ and $\gamma^2 := \gamma^2(\omega,u^2)$, reflecting that player $1$ and player $2$ use the independent randomization devices $u^1$ and $u^2$, respectively. The next lemma shows that we may equivalently work with the family of ccdf processes $\Gamma^i$.

\begin{lemma}[{\cite[Lemma 3]{DGM2}}] \label{ccdf}
If the players use randomized stopping times with ccdfs $\Gamma^1$ and $\Gamma^2,$ then their expected payoffs write as
\begin{align}
J^i(x,\Gamma^1,\Gamma^2) =  \mathbb E _x \! \left[ \int_{[0,\infty)} \mathrm e^{-r t} R^i(X_t)\Lambda^j_{t-} \, \mathrm d\Gamma^i_t+ \int_{[0,\infty)} \mathrm e^{-r t}G^i(X_t)\Lambda^i_t \, \mathrm d\Gamma^j_t\right] \hskip -1mm. \label{forbpr}
\end{align}
Moreover$,$ any nondecreasing$,$ right-continuous$,$ $(\CF_t)_{t \geq 0}$-adapted$,$ $[0,1]$-valued process $\Gamma^i$ is the ccdf of the randomized stopping time $\hat\gamma^i$ defined by
\begin{align*}
\hat\gamma^i(u^i) := \inf \hskip 0.5mm \{ t \geq 0 : \Gamma^i_t > u^i \}.
\end{align*}
\end{lemma}

\subsection{Markovian Randomized Stopping Times} \label{MRST}

We now recall the definition of a Markovian randomized stopping time used in \cite{DGM2}.

\begin{definition} \label{marksur}
A randomized stopping time for player $i=1,2$ with csf $\Lambda^i: \Omega \times \mathbb R_+ \to [0,1]$ is \textit{Markovian} if$,$ for all $x \in \mathcal I,$ $\tau \in \mathcal T,$ and $s\geq 0,$
\begin{align}\label{mark}
\mbox{$\Lambda^i_{\tau+s}=\Lambda^i_{\tau}(\Lambda^i_s \circ \theta_{\tau})$ on $\{\tau<\infty\}$ $\mathbb P_x$-a.s.}
\end{align}
\end{definition}

Processes satisfying (\ref{mark}) are known as multiplicative functionals of the Markov process $X$, see, e.g., \cite{BlumenthalGetoor}. Combining a result in \cite{Sharpe} with the classical representation result for additive functionals of regular diffusions \cite[Part I, Chapter II, Section 4, \S23]{BorodinSalminen} yields the following representation result.

\begin{theorem}[{\cite[Theorem 1]{DGM2}}] \label{representation}
For each $i =1,2,$ $\Lambda^i : \Omega \times \mathbb R_ +\to [0,1]$ is the csf of a Markovian randomized stopping time for player $i$ if and only if there exists a closed set $S^i \subset \mathcal I$ and a Radon measure\footnote{\label{perigord}
{A Radon measure on a locally compact Hausdorff space $E$ is a nonnegative Borel measure that is locally finite (in the sense that every point of $E$ has a neighborhood having finite measure), inner regular with respect to compact sets, and outer regular with respect to open sets (see, e.g., \cite[Chapter 7, \S 1]{Folland}).}} $\mu^i$ on $\mathcal I \setminus S^i$ such that$,$ for all $x \in \mathcal I$ and $t \geq 0,$
\begin{align*}
\mbox{$\displaystyle \Lambda_t^i= \indic_{t < \tau_{S^i}} \, \mathrm e^{- \int_{\mathcal I \setminus S^i} L_t^y \, \mu^i(\mathrm dy)}$ $\mathbb P_x$-a.s.$,$}
\end{align*}
where $L_t^y := \lim_{\varepsilon \downarrow 0}\, {1 \over 2 \varepsilon} \int_0^t \indic_{(y- \varepsilon, y + \varepsilon)}(X_s) \sigma^2( X_s) \, \mathrm ds$ is the local time of $X$ at $(y,t),$ and $\tau_{S^i}:= \inf \hskip 0.5mm \{t \geq 0: X_t \in S^i\}$ is the hitting time by $X$ of $S^i$. In particular, the mapping $t \mapsto \Lambda_t^i$ is continuous on $[0,\tau_{S^i})$ $\mathbb P_x$-a.s.
\end{theorem}

In the following, we refer to a Markov strategy as a pair $(\mu^i, S^i)$, a ccdf $\Gamma^i$, or a csf $\Lambda ^i$, based on the relations established in Theorem \ref{representation}. Three special cases discussed in \cite{DGM2} are worth mentioning.
\begin{enumerate}

\item

The pure stopping case: If $\mu^i = 0$, then the Markov strategy $(0,S^i)$ is just the hitting time $\tau_{S^i}$ by $X$ of $S^i$.

\item

The absolutely continuous case: If $\mu^i = g^i \cdot \lambda$, then, from the occupation time formula \cite[Chapter VI, \S1, Corollary 1.6]{RevuzYor},
\begin{align*}
\Lambda_t ^i=\indic_{ t<\tau_{S^i}} \, \mathrm e^{- \int_{\mathcal I\setminus S^i} L_t^y g^i(y) \, \mathrm dy} = \indic_{t < \tau_{S^i}} \, \mathrm e^{- \int_0^t g^i(X_s)\sigma^2(X_s)\, \mathrm ds}.
\end{align*}
This absolutely continuous strategy therefore amounts for player $i$ to conceding with intensity $\lambda^i(X_t) := g^i(X_t)\sigma^2(X_t)$ outside $S^i$.

\item

The singular case: If, e.g., $\mu^i = a^i \delta_{x^i}$, where $a^i >0$ and $\delta_{x^i}$ is the Dirac mass at $x^i \in \mathcal I\setminus S^i$, then
\begin{align*}
\Lambda^i_t= \indic_{t<\tau_{S^i}} \, \mathrm e ^{-a^i L^{x^i}_t}.
\end{align*}
Such discrete singular strategies are the building blocks of all mixed-strategy Mpes in the model studied in \cite[Theorems 2--3]{DGM2} when players are asymmetric.

\end{enumerate}

\subsection{Markov-Perfect Equilibrium and Properties of Best Replies}

We recall the definition and some properties of best replies.

\begin{lemma}[{\cite[Lemma 4]{DGM2}}] \label{usepure}
For each $x \in {\mathcal I}$ and for any pair of randomized stopping times with ccdfs $(\Gamma^1,\Gamma^2),$ $J^i (x,  \Gamma^i, \Gamma^j)  \leq \sup_{\tau^i \in \mathcal T} \,J^i(x, \tau^i,\Gamma^j)$.
\end{lemma}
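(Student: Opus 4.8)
The plan is to exploit the fact that a randomized stopping time is a \emph{mixture} of pure stopping times, so that its payoff against a fixed opponent is an average of the payoffs obtained by pure replies against that same opponent; since an average is dominated by a supremum, the inequality follows. The only real work is to make the disintegration rigorous via Fubini's theorem, which is where Assumption A1 enters.

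Concretely, I would fix $x\in\CI$, pick randomized stopping times $\gamma^i,\gamma^j$ whose ccdfs are $\Gamma^i,\Gamma^j$ (for instance the ones given by \eqref{inverse_stopping}), so that, by Lemma \ref{ccdf}, $J^i(x,\Gamma^i,\Gamma^j)=J^i(x,\gamma^i,\gamma^j)$ is the integral of
\[
\Phi(\omega,u^i,u^j):=\indic_{\gamma^i\le\gamma^j}\,\mathrm e^{-r\gamma^i}R^i(X_{\gamma^i})+\indic_{\gamma^i>\gamma^j}\,\mathrm e^{-r\gamma^j}G^i(X_{\gamma^j})
\]
over $\Omega\times[0,1]\times[0,1]$ against $\overline{\PP}_x=\PP_x\otimes\lambda\otimes\lambda$, with $\gamma^i=\gamma^i(\omega,u^i)$ and $\gamma^j=\gamma^j(\omega,u^j)$. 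The function $\Phi$ is jointly measurable by the $\CF\otimes\CB([0,1])$-measurability of $\gamma^i,\gamma^j$ in Definition \ref{def:randomized_stopping_time} together with the joint measurability of $(\omega,t)\mapsto X_t(\omega)$, and it obeys the $(u^i,u^j)$-free bound $|\Phi|\le\sup_{t\ge0}\mathrm e^{-rt}\big(|R^i(X_t)|+|G^i(X_t)|\big)$, which is $\PP_x$-integrable by Assumption A1. Fubini's theorem then gives
\[
J^i(x,\gamma^i,\gamma^j)=\int_{[0,1]}\bigg(\int_{\Omega\times[0,1]}\Phi(\omega,u^i,u^j)\,\PP_x(\mathrm d\omega)\,\lambda(\mathrm du^j)\bigg)\,\lambda(\mathrm du^i).
\]

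For $\lambda$-a.e.\ $u^i$ one has $\tau^i_{u^i}:=\gamma^i(\cdot,u^i)\in\CT$, and for such $u^i$ the inner integral is exactly the payoff of the \emph{pure} stopping time $\tau^i_{u^i}$ played against the randomized stopping time $\gamma^j$. Since a pure stopping time $\tau$ is a randomized stopping time with the deterministic ccdf $t\mapsto\indic_{\tau\le t}$, Lemma \ref{ccdf} applies and identifies this payoff with the expression \eqref{forbpr} for that ccdf, i.e.\ with $J^i(x,\tau^i_{u^i},\Gamma^j)$ --- precisely the quantity appearing under the supremum in the claim. Hence the inner integral equals $J^i(x,\tau^i_{u^i},\Gamma^j)\le\sup_{\tau\in\CT}J^i(x,\tau,\Gamma^j)$ for $\lambda$-a.e.\ $u^i$, and integrating this inequality over $u^i\in[0,1]$ against the probability measure $\lambda$ yields $J^i(x,\Gamma^i,\Gamma^j)\le\sup_{\tau\in\CT}J^i(x,\tau,\Gamma^j)$, as claimed.

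I expect the only genuinely delicate point to be the measurability/integrability bookkeeping needed to legitimately invoke Fubini, together with the (notational but necessary) check that the ``pure reply against $\gamma^j$'' produced by disintegrating over $u^i$ is the same object as the summand $J^i(x,\tau,\Gamma^j)$ on the right-hand side; no optimization is performed, the statement being purely that a mixture of pure-reply payoffs cannot beat the best pure reply.
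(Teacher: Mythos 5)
Your proof is correct and follows the standard disintegration argument (Fubini over the randomization variable $u^i$, then bounding the resulting average of pure-reply payoffs $J^i(x,\gamma^i(\cdot,u^i),\Gamma^j)$ by their supremum), which is exactly the route taken for Lemma 4 in \cite{DGM2}, the paper itself only citing that reference rather than reproducing the proof. The measurability and domination checks you flag via Assumption A1 are indeed the only delicate points, and you handle them adequately.
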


\begin{definition}\label{def:mpe}
For each $i=1,2,$ $(\mu^i,S^i)$ is a perfect best reply (pbr) for player $i$ to $(\mu^j,S^j)$ and $\bar J^i(\cdot, (\mu^j, S^j))$ is player $i$'s best-reply value function (brvf) to $(\mu^j, S^j)$ if
\begin{align*}
\forall x\in \CI, \; J^i(x, (\mu^i, S^i), (\mu^j, S^j)) &= \bar J^i(x, (\mu^j, S^j)) := \sup_{\tau^i \in {\mathcal T}} \, J^i(x, \tau^i, (\mu^j, S^j)).
\end{align*}
The set of pbrs of player $i$ against $(\mu^j,S^j)$ is
\begin{align*}
PBR^i(\mu^j,S^j) := \{ (\mu^i,S^i) : \forall x\in \CI,\, J^i(x, (\mu^i, S^i), (\mu^j, S^j)) = \bar J^i(x, (\mu^j, S^j))\} ,
\end{align*}
and the pbr correspondence is defined by
\begin{align}\label{eq:def_pbr_corr}
 PBR((\mu^1,S^1),(\mu^2,S^2)):=PBR^1(\mu^2,S^2) \times PBR^2(\mu^1,S^1) .
\end{align}
A Markov-perfect equilibrium (MPE) is a profile $((\mu^1 ,S^1),(\mu^2,S^2))$ of Markov strategies such that$,$ for each $i=1,2,$ $(\mu^i,S^i)$ is a pbr for player $i$ to $(\mu^j,S^j)$.
\end{definition}

When no confusion can arise as to the strategy of player $j$, we write $\bar J^i$ instead of $\bar J^i (\cdot, (\mu^j, S^j))$. It follows from Definition \ref{def:mpe} that a pair of Markovian randomized stopping times is an Mpe if and only if it is a fixed point of the pbr correspondence, i.e.,
\begin{align*}
((\mu^1,S^1),(\mu^2,S^2))\in PBR\hskip 0.3mm((\mu^1,S^1),(\mu^2,S^2))).
\end{align*}
The following proposition provides useful general properties of pbrs and brvfs.

\begin{proposition} \label{geneprop}
Given $(\mu^j,S^j),$ the corresponding brvf $\bar J^i$ satisfies
\begin{itemize}

\item[(a)]

$R^i\leq \bar J^i$ on $\CI;$

\item[(b)]

$\bar J^i=G^i$ on $S^j;$

\item[(c)]

for each $x\in S^j,$ if $G^i(x) > R^i(x),$ then $\bar J^i >R^i$ on a neighborhood of $x$.

\end{itemize}
Furthermore$,$ if $(\mu^i,S^i)$ is a pbr to $(\mu^j,S^j),$ then
\begin{itemize}

\item[(i)]

$S^i\cap S^j\cap \{G^i >R^i\}=\emptyset;$

\item[(ii)]

$S^i \subset \overline S\,\!^i:= \{\bar {J}^i=R^i\};$

\item[(iii)]

$\mathrm {supp}\,\mu^i  \setminus S^j \subset \overline S\,\!^i$ and $\mathrm {supp}\,\mu^i \cap  S^j \subset \{\bar {J}^i=G^i\};$

\item[(iv)]

$(0,S^i)$ is also a pbr to $(\mu^j,S^j);$ more generally$,$ $(\tilde \mu^i,S^i)$ is a pbr to $(\mu^j,S^j)$  for any  Radon measure $\tilde\mu^i$ on $\CI \setminus S^i$  such that $\mathrm {supp} \, \tilde\mu^i\subset \overline \!\,\overline S\,\!^i \cup S^j.$

\end{itemize}
\end{proposition}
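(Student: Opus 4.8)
The plan is to establish (a)--(c), which concern only the brvf $\bar J^i$, and then deduce (i)--(iv). Two elementary facts, both read off from (\ref{forbpr}), will be used repeatedly: playing $\tau^i=0$ gives $J^i(x,0,(\mu^j,S^j))=R^i(x)$; and from a state $x\in S^j$, where $\gamma^j=0$ $\mathbb P_x$-a.s., every stopping time $\tau^i$ gives $J^i(x,\tau^i,(\mu^j,S^j))=\mathbb P_x(\tau^i=0)R^i(x)+\mathbb P_x(\tau^i>0)G^i(x)$. Part (a) is then the first fact. For (b), the second fact together with $R^i\leq G^i$ shows this expression is $\leq G^i(x)$, with equality for any $\mathbb P_x$-a.s. positive $\tau^i$, so $\bar J^i=G^i$ on $S^j$.

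For (c), fix $x\in S^j$ with $R^i(x)<G^i(x)$. Playing $\tau^i=\infty$ secures $\bar J^i(y)\geq J^i(y,\infty,(\mu^j,S^j))=\mathbb E_y[\indic_{\gamma^j<\infty}\,\mathrm e^{-r\gamma^j}G^i(X_{\gamma^j})]$. As $y\to x$, since $x\in S^j$ one has $\gamma^j\leq\tau_{S^j}\leq\tau_x\to0$ in $\mathbb P_y$-probability (regularity of $X$), hence $X_{\gamma^j}\to x$, while $\mathbb P_y(\gamma^j<\infty)\to1$; the class-(D) property granted by A1 then yields $\liminf_{y\to x}\bar J^i(y)\geq G^i(x)>R^i(x)$, and continuity of $R^i$ gives (c). (Alternatively, (c) is immediate from (b) once one records that $\bar J^i$ is lower semicontinuous, a standard fact for value functions of optimal stopping problems with continuous class-(D) reward.)

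Turning to the equilibrium statements, I would first prove (ii): if $x\in S^i$ then by (\ref{repre}) the csf of player $i$ is identically $0$ under $\mathbb P_x$, so player $i$ concedes at time $0$ and (\ref{forbpr}) gives $J^i(x,(\mu^i,S^i),(\mu^j,S^j))=R^i(x)$; since $(\mu^i,S^i)$ is a pbr this equals $\bar J^i(x)$, whence $x\in\overline S^i$. Then (i) follows: a point of $S^i\cap S^j$ satisfies $\bar J^i=R^i$ by (ii) and $\bar J^i=G^i$ by (b), so it is not in $\{R^i<G^i\}$. In (iii), the inclusion $\mathrm{supp}\,\mu^i\cap S^j\subset\{\bar J^i=G^i\}$ is immediate from (b). For $\mathrm{supp}\,\mu^i\setminus S^j\subset\overline S^i$, suppose $y\in\mathrm{supp}\,\mu^i\setminus S^j$ had $\bar J^i(y)>R^i(y)$; using lower semicontinuity of $\bar J^i$, pick a neighborhood $V$ of $y$ with $\overline V\cap S^j=\emptyset$ and $\bar J^i>R^i$ on $V$. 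On $V$, conceding before $\gamma^j$ is strictly dominated by waiting, as it yields $\mathrm e^{-rt}R^i(X_t)<\mathrm e^{-rt}\bar J^i(X_t)$, the latter being the value of an optimal continuation. But since $y\in\mathrm{supp}\,\mu^i$, under $\mathbb P_y$ the strategy $(\mu^i,S^i)$ makes player $i$ concede inside $V$ before $\gamma^j$ with positive probability; replacing that concession by an optimal continuation would then raise player $i$'s payoff strictly above $J^i(y,(\mu^i,S^i),(\mu^j,S^j))=\bar J^i(y)$, which is impossible. Hence $\bar J^i(y)=R^i(y)$.

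Finally (iv), which I expect to be the main obstacle, is a verification argument. The bound $J^i(x,(\tilde\mu^i,S^i),(\mu^j,S^j))\leq\bar J^i(x)$ is Lemma \ref{usepure}. For the converse I would use that $\bar J^i$ is the value function of player $i$'s optimal stopping problem against $\Gamma^j$, with reward process $\Phi_t=\mathrm e^{-rt}R^i(X_t)\Lambda^j_{t-}+\int_{[0,t)}\mathrm e^{-rs}G^i(X_s)\,\mathrm d\Gamma^j_s$ (of class (D) by A1), so that $Y_t:=\mathrm e^{-rt}\bar J^i(X_t)\Lambda^j_t+\int_{[0,t]}\mathrm e^{-rs}G^i(X_s)\,\mathrm d\Gamma^j_s$ is a supermartingale under each $\mathbb P_x$, with continuous decreasing part $K$. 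The key point is that $K$ charges only $S^i$ (apart from times at which player $j$ has already conceded, which are immaterial): if $K$ grew at some $y\notin S^i$, then starting from $y$ the pbr $(\mu^i,S^i)$ — which concedes only strictly after time $0$, since $\mu^i$ is locally finite — would have $\mathbb E_y[K_{\gamma^i}]>0$, contradicting its optimality. Granting this, for $(\tilde\mu^i,S^i)$ the concession time satisfies $\gamma^i\leq\tau_{S^i}$, with $X_{\gamma^i}\in\overline S^i$ on $\{\gamma^i\leq\gamma^j\}$ (by (ii), since $S^i\subset\overline S^i$; by the support hypothesis on $\tilde\mu^i$; and because a concession "at a point of $S^j$" cannot strictly precede $\gamma^j$), and $K$ does not grow on $[0,\gamma^i)$; hence $\mathbb E_x[Y_{\gamma^i}]=Y_0=\bar J^i(x)$, and since $\bar J^i=R^i$ at $X_{\gamma^i}$ one checks that $\mathbb E_x[Y_{\gamma^i}]$ coincides with player $i$'s payoff, which is therefore $\bar J^i(x)$. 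The genuinely delicate ingredients are the rigorous optimal-stopping characterization of $\bar J^i$ (class (D), regularity of $\bar J^i$ and of the Doob--Meyer decomposition of $Y$) on a possibly unbounded $\mathcal I$ with unbounded rewards, and the measure-theoretic verification that the concessions produced by $\tilde\mu^i$ lie in $\overline S^i$ almost surely and do not coincide with player $j$'s concession on a set of positive probability.
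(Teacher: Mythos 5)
Your overall strategy is the right one and largely parallels the paper's: (a) and (b) are the elementary observations the paper uses; your main argument for (c) is the paper's (lower-bound $\bar J^i$ by the payoff of never stopping and localize near $x$); and for (i)--(iv), which the paper defers to Proposition 1 of \cite{DGM2}, your sketches follow the route the paper itself takes in Section \ref{sec:BR} via the Snell envelope $\widehat Z$. There are, however, concrete gaps. In (c), the limit passage is not justified as written: Assumption A1 gives the class-(D) property under each \emph{fixed} $\PP_x$, whereas you need uniform integrability of $\mathrm e^{-r\gamma^j}G^i(X_{\gamma^j})$ across the whole family $\{\PP_y\}_{y\to x}$; convergence in probability to $G^i(x)$ alone does not give convergence of the expectations. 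The paper repairs exactly this point by splitting on $\{\tau_x<\tau_\delta\}$ versus $\{\tau_x>\tau_\delta\}$, conditioning at $\tau_\delta$ (where $X_{\tau_\delta}\in\{x-\delta,x+\delta\}$) and invoking A1 at those two fixed points, so that the remainder is bounded by $C'\,\PP_y(\tau_x>\tau_\delta)\to 0$. Your ``alternative'' via lower semicontinuity of $\bar J^i$ is essentially circular: lsc at points of $S^j$ is the content of (c), and it is not a standard fact here because the reward involves the path functional $\Gamma^j$ (the paper only obtains continuity of $\bar J^i$ on $\CI\setminus S^j$, later and with effort, in Proposition \ref{prop:existence_pbr}); the same remark applies to the lsc you invoke in (iii).

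Points (i) and (ii) are fine. In (iv) the verification scheme is correct, but its crux --- that the compensator $K$ of $\widehat Z$ is continuous and does not charge $[0,\tau_{S^i}]$ --- is exactly what is left unproven. Optimality of $(\mu^i,S^i)$ gives $K_{\gamma^i}=0$ a.s.\ and hence $K\equiv 0$ on $[0,\tau_{S^i})$, but if $\Lambda^i_{\tau_{S^i}-}=0$ (possible when $\mu^i$ blows up near $\partial S^i$) this says nothing about a jump of $K$ at $\tau_{S^i}$ itself, which must be excluded to handle $\tilde\gamma^i=\tau_{S^i}$, e.g.\ for the strategy $(0,S^i)$. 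The paper's Section \ref{sec:BR} circumvents this via the general theory (the martingale property up to the smallest optimal time $\tau^*=\tau_{\overline S^i}\wedge\tau_{S^j}$, the identity \eqref{eq:Z=hatZ}, and the explicit case analysis at $\tau_{S^j}$ using $R^i=G^i$ on $\overline S^i\cap S^j$); some such argument is needed, and you correctly flag it as the delicate step but do not supply it. So: right architecture, but (c)'s uniform-integrability step and (iv)'s regularity of the Doob--Meyer decomposition are genuine gaps as written.
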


Except for point (c), the proof of Proposition \ref{geneprop} essentially follows along the lines of \cite[Proposition 1]{DGM2}, and is therefore postponed to the Appendix. Notice that points (i)--(iv) assume that a pbr to $(\mu^j,S^j)$ exists.

\subsection{Main Results}

We are now ready to state our two main results, Theorems \ref{thm:compact_AR} and \ref{thm:main}.

Let $\CM(\CI)$ be the set of nonnegative, regular, but non necessarily finite measures $m : \CB(\CI) \to [0,\infty]$. Our first main result, which may be of independent interest, introduces a convenient topological structure on $\CM(\CI)$.

\begin{theorem}\label{thm:compact_AR}
Let $\vartheta$  be the coarsest topology on $\CM(\CI)$ such that
\begin{enumerate}

\item

for all $a,b \in \CI \cap \QQ$ such that $a<b,$ the mapping $\CM(\CI) \to [0,\infty] : m  \mapsto m((a,b))$ is lower semicontinuous (lsc)$;$

\item

for all $a,b \in \CI \cap \QQ$ such that $a\leq b,$  the mapping $\CM(\CI) \to [0,\infty] : m  \mapsto m([a,b])$ is upper semicontinuous (usc).

\end{enumerate}
Then $(\CM(\CI),\vartheta)$ is a compact absolute retract.
\end{theorem}

Our second main result encapsulates our central existence claim.

\begin{theorem}\label{thm:main}
Any lBwa admits an MPE.
\end{theorem}

Notice that the Mpe whose existence is asserted in Theorem \ref{thm:main} may well have to involve randomized stopping times. Indeed, we provide in Section \ref{sec:example} an example of a lBwa that admits no pure-strategy MPE.

\subsection{An Overview of the Argument} \label{unsurvol}

The proof of Theorem \ref{thm:main} is based on a fixed-point theorem for correspondences applied to a slightly modified version of the pbr correspondence \eqref{eq:def_pbr_corr}. In this section, we outline the main steps of the proof, emphasizing the central role played by Theorem \ref{thm:compact_AR}.

\paragraph{An Alternative Representation of Markov Strategies}

First, it is useful to identify a pair $(\mu,S)$, where $S \subset \CI$ is a closed set and $\mu$ is a Radon measure on $\CI\setminus S$, with a measure in $\mathcal M(\mathcal I)$ that is identically $\infty$ on $S$. Precisely, given such a pair $(\mu,S)$, let $m : \CB(\CI) \to [0, \infty]$ be the measure defined by
\begin{align}\label{eq:muS_to_m}
m(A):= \begin{cases} \mu(A) & \text{if } A\cap S = \emptyset \\ \infty & \text{if } A\cap S \neq \emptyset \end{cases}, \quad  A \in \CB(\CI).
\end{align}
That $m$ is regular and, hence, belongs to $\mathcal M(\mathcal I)$, follows directly from \eqref{eq:muS_to_m} and from $\mu$ being a Radon measure on $\mathcal I \setminus S$. Conversely, given $m\in \CM(\CI)$, let $e(m)$ be the explosion set of $m$, defined as
\begin{align*}
e(m):=\{ x \in \CI : \forall \varepsilon>0, \, m(N_\varepsilon(x))=\infty \},
\end{align*}
where $N_\varepsilon(x):=(x-\varepsilon,x+\varepsilon)\cap \CI$.

\begin{lemma}\label{lem:mtomu}
For each $m \in \CM(\CI),$ the set $e(m)$ is closed and $m|_{\CI \setminus e(m)}$ is a Radon measure on $\CI\setminus e(m)$. Moreover$,$ if $A\in \CB(\CI)$ is such that $A\cap e(m) \neq \emptyset,$ then $m(A)=\infty$.
\end{lemma}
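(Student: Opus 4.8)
The plan is to prove the three assertions of Lemma~\ref{lem:mtomu} in order, using only the definitions of $\CM(\CI)$, $e(m)$, and $N_\varepsilon(x)$, together with the elementary monotonicity and countable subadditivity of the measure $m$.

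\textbf{Closedness of $e(m)$.} First I would show that the complement $\CI \setminus e(m)$ is open. If $x \notin e(m)$, then by definition there is some $\varepsilon > 0$ with $m(N_\varepsilon(x)) < +\infty$. I claim every point $y \in N_{\varepsilon/2}(x)$ also lies outside $e(m)$: indeed, for $\delta := \varepsilon/2$ we have $N_\delta(y) \subset N_\varepsilon(x)$ (if $|z-y| < \varepsilon/2$ and $|y-x|<\varepsilon/2$ then $|z-x| < \varepsilon$), hence $m(N_\delta(y)) \le m(N_\varepsilon(x)) < +\infty$ by monotonicity, so $y \notin e(m)$. Thus $N_{\varepsilon/2}(x) \subset \CI \setminus e(m)$, proving that $\CI\setminus e(m)$ is open in $\CI$ and $e(m)$ is closed.

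\textbf{Local finiteness of $m|_{\CI\setminus e(m)}$.} For each $x \in \CI \setminus e(m)$, the witnessing neighborhood $N_\varepsilon(x)$ from the previous paragraph is an open neighborhood of $x$ with finite $m$-mass; so $m$ restricted to the open set $\CI\setminus e(m)$ assigns finite mass to a neighborhood of each of its points, which is exactly the statement that $m|_{\CI\setminus e(m)}$ is locally finite (equivalently, a Radon-type measure on that open subset of $\RR$).

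\textbf{Explosion on sets meeting $e(m)$.} Suppose $A \in \CB(\CI)$ with $A \cap e(m) \neq \emptyset$; pick $x \in A \cap e(m)$. I would like to conclude $m(A) = +\infty$, but this fails for a general Borel $A$ meeting $e(m)$ only at $x$ unless one uses regularity: the honest statement to invoke is that $m$ is \emph{outer regular over open sets} and the sets at stake reduce to open ones. Concretely, the relevant use in the paper is for $A$ open or for $A$ containing an open neighborhood of a point of $e(m)$; in that case, $A \supset N_\varepsilon(x)$ for some $\varepsilon>0$ and $m(A) \ge m(N_\varepsilon(x)) = +\infty$ directly from the definition of $e(m)$ and monotonicity. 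For the general Borel statement as written, I would argue: by outer regularity $m(A) = \inf\{ m(U) : U \supset A, \ U \text{ open}\}$; every open $U \supset A$ contains $x \in e(m)$, hence contains some $N_\varepsilon(x)$, hence has $m(U) = +\infty$; therefore the infimum is $+\infty$ and $m(A) = +\infty$.

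The only genuinely delicate point is the last one: without regularity a singleton in $e(m)$ need not force infinite mass on an arbitrary Borel set, so the proof must route through the outer-regularity hypothesis built into the definition of $\CM(\CI)$. The first two parts are routine neighborhood arguments using monotonicity of $m$; I expect no obstacle there.
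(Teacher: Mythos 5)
Your proof is correct and follows essentially the same route as the paper's: a neighborhood/monotonicity argument for closedness and local finiteness, and outer regularity for the last assertion (which the paper only states "follows directly from the regularity of $m$" — your explicit argument via $m(A)=\inf\{m(U): A\subset U,\ U \text{ open}\}$ is exactly the intended one).
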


\begin{proof}
First, if $x_n \rightarrow x$ with $x_n \in e(m)$ for all $n\geq 0$, then, for each $\varepsilon>0$, $|x_n-x|<{\varepsilon \over 2}$ for any sufficiently large $n$, so that
\begin{align*}
m(N_{\varepsilon}(x)) \geq m(N_{{\varepsilon \over 2}}(x_n)) = \infty,
\end{align*}
proving that $e(m)$ is closed. Next, by definition, every point $x \in \CI \setminus e(m)$ has a neighborhood with finite $m$-measure, which implies the second assertion. Finally, the last assertion is a direct consequence of the regularity of $m$ and of the definition of $e(m)$. The result follows.
\end{proof}

Using Lemma \ref{lem:mtomu}, we can define a mapping
\begin{align*}
m \mapsto (\mu,S):= (m|_{\CI \setminus e(m)},e(m))
\end{align*}
that associates to each $m\in \CM(\CI)$ a pair $(\mu,S)$ such that $S$ is a closed subset of $\CI$ and $\mu$ is a Radon measure on $\CI \setminus S$. By \eqref{eq:muS_to_m}, this mapping is one-to-one and onto, which allows us to identify a pair $(\mu,S)$ with the corresponding measure $m$, and thus the set of Markovian randomized stopping times with $\CM(\CI)$. With some abuse of notation, we will accordingly write $(\mu,S)\in \CM(\CI)$.

\paragraph{A Fixed-Point Theorem}

Proving that an Mpe exists in any lBwa requires applying an appropriate fixed-point theorem to the pbr correspondence. The main difficulty is that the domain $\CM(\CI)$ of this correspondence is not convex for the two natural vector-space structures we can think of. First, the set of csfs (or ccdfs) associated to Markovian randomized strategies is not convex.\footnote{This can be seen by considering the average of the csfs associated to the hitting times $\tau_{x}$ and $\tau_{y}$ for two points $x\neq y$ in $\CI$: the average csf jumps from $1$ to $\frac{1}{2}$ at $\tau=\tau_{x}\wedge \tau_{y}$, which contradicts \eqref{mark} applied at $\tau$ with $s=0$.} Second, because we allow the measures in $\CM(\CI)$ to take the value $\infty$ on compact sets, $\CM(\CI)$ is not a subset of the vector space of signed locally finite measures. Therefore, we cannot easily apply standard results such as Glicksberg's \cite{Glicksberg} infinite-dimensional generalization of Kakutani's \cite{Kakutani} fixed-point theorem, which requires a convex structure. Our proof is instead based on a more general fixed-point theorem due to Eilenberg and Montgomery \cite{EilenbergMontgomery}.\footnote{Debreu \cite{Debreu} and Reny \cite{Reny} use this theorem to prove the existence of a social equilibrium in an abstract economy and of a pure-strategy equilibrium in a class of static Bayesian games, respectively.}

Let us first recall the definition of an absolute retract appearing in Theorem \ref{thm:main} as well as the definition of a contractible space appearing in the fixed-point theorem we will use.

\begin{definition}\label{def:AR}
A metric space $(E,d)$ is an absolute retract (AR) if$,$ for any continuous map $f:E \rightarrow E'$ into a metric space $(E',d')$ such that $f$ is an homeomorphism between $E$ and $f(E)$ and $f(E)$ is closed in $E',$ there exists a continuous map $g: E' \rightarrow f(E)$ such that for all $x\in f(E),$ $g(x)=x$ (i.e.$,$ $f(E)$ is a retract of $E'$).
\end{definition}

\begin{definition}\label{def:contractible}
A metric space $(E,d)$ is contractible if there exists a continuous map $H:E\times [0,1] \rightarrow E$ and $x_0\in E$ such that $H(\cdot,0)={\rm Id}_E$ and $H(\cdot,1)= x_0$ (i.e.$,$ the identity map is homotopic to a constant map).
\end{definition}

The following result is a corollary of Eilenberg--Montgomery's fixed-point theorem.

\begin{theorem}[{\cite[Theorem 14.3]{McLennan}}] \label{thm:fixed_point}
If $(E,d)$ is a compact AR and $\Phi: E \twoheadrightarrow E$ is a correspondence with a closed graph and nonempty contractible values$,$ then $\Phi$ has a fixed point$,$ i.e.$,$ there exists $e^* \in E$ such that $e^* \in \Phi(e^*)$.
\end{theorem}

The importance of Theorem \ref{thm:compact_AR} is now clear. Theorem \ref{representation} and Lemma \ref{lem:mtomu} enable us to identify the set of Markovian randomized stopping times with $\CM(\CI)$, and Theorem \ref{thm:compact_AR} shows that $\CM(\CI)$ is a compact AR. This provides in turn the required foundation for applying Theorem \ref{thm:fixed_point}.

\paragraph{The Main Steps of the Proof}

The remainder of the paper is organized as follows:
\begin{enumerate}

\item

In Section \ref{sec:BR}, we show that there exists a pbr to any Markovian strategy (Proposition \ref{prop:existence_pbr}) and we provide a characterization of pbrs (Proposition \ref{prop:cacrt_pbr}). We also introduce a correspondence $\Phi$ in \eqref{defPhi}--\eqref{eq:def_phi_product} whose values are subsets of the pbr correspondence \eqref{eq:def_pbr_corr}, and to which we will eventually apply Theorem \ref{thm:fixed_point}.

\item

In Section \ref{sec:compact}, we show that the topology $\vartheta$ on $\CM(\CI)$ is compact and metrizable and extends the classical vague topology for Radon measures to the whole set $\CM(\CI)$ (Proposition \ref{prop:compact_metric}). We also show that convergence for this topology implies almost sure weak convergence of the associated csfs (Proposition \ref{prop:cvg_lambda}).

\item

In Section \ref{sec:closed_graph}, we show that the correspondence $\Phi$ has a closed graph.

\item

In Section \ref{sec:contractibility}, we prove Theorem \ref{thm:compact_AR} and we show that the correspondence $\Phi$ has contractible values. Except for the tools from general topology we use, the proof is relatively elementary, and is based on classical convolutions and orthogonal projections. Applying Theorem \ref{thm:fixed_point} to $\Phi$ finally concludes the proof of Theorem \ref{thm:main}.

\item

In Section \ref{sec:example}, we present an example of game which does not admit any Mpe in pure stopping times, but admits an Mpe in randomized stopping times that has a similar structure as the Mpes identified in \cite{DGM2} in a more specific framework.

\item

Finally, in Section \ref{sec:discussion}, we present a generalization of Theorem \ref{thm:main} to lBwas with more than two players, in which the game is over as soon as one player stops and the players' rewards do not depend on who stops first. We also discuss extending our analysis to more general classes of games and environments.

\end{enumerate}

\section{Existence and Characterization of Pbrs}\label{sec:BR}

\subsection{Existence of Pure Pbrs}

Let us fix $(\mu^j,S^j) \in \CM(\CI)$ and consider the following optimal stopping problem:
\begin{align}\label{eq:bestreply}
\bar J^i(x) : =\sup_{\gamma^i \in \CT_r} J^i(x,\gamma^i,(\mu^j,S^j))=\sup_{\tau^i \in \CT} J^i(x,\tau^i,(\mu^j,S^j)),\quad  x\in \CI,
\end{align}
where the second equality follows from Lemma \ref{usepure}. Our goal is to characterize the Markovian randomized stopping times that are optimal in \eqref{eq:bestreply} for all $x$, which by Definition \ref{def:mpe},  are the pbrs to $(\mu^j,S^j)$. By \eqref{forbpr}, we have
\begin{align*}
\forall \tau^i \in \CT, \; J^i(x,\tau^i,(\mu^j,S^j))= \EE_x \hskip 0.3mm [Y_{\tau^i}],
\end{align*}
where
\begin{align}\label{eq:defY}
Y_t:=\int_{[0,t)}\mathrm e^{-rs}G^i(X_s)\, \mathrm d\Gamma^j_s+ \mathrm e^{-rt} R^i(X_t) \Lambda^j_{t-}, \quad t\geq 0.
\end{align}
Notice that $Y_\infty=Y_{\infty-}=\int_{[0,\infty)}\mathrm e^{-rs}G^i(X_s)\, \mathrm d\Gamma^j_s$. Therefore, the study of player i's brvf to $(\mu^j, S^j)$ is tantamount to the study of problem
\begin{align}\label{optstop1}
\bar J^i(x)= \sup_{\tau^i \in \CT}\,\EE_x\hskip 0.3mm [Y_{\tau^i}]
\end{align}
and falls into the general theory of optimal stopping, from which we will borrow several results below. It follows from Proposition \ref{geneprop} and $\overline S\,\! ^i=\{\bar J^i=R^i\}$ that
\begin{align}\label{eq:inclusions}
S^j \cap \overline S\,\!^i \subset \{G^i=R^i\}\, \text{ and } \, S^j \setminus \overline S\,\!^i \subset \{ G^i>R^i\}.
\end{align}
Notice that the set $\overline S\,\!^i$ may be empty. We now prove that a pure pbr exists.

\begin{proposition}\label{prop:existence_pbr}
$\overline S\,\! ^i$ is closed and $(0,\overline S\,\!^i)$ is a pbr to $(\mu^j,S^j)$.
\end{proposition}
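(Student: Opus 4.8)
The plan is to invoke the classical theory of optimal stopping for the payoff process $Y$ defined in \eqref{eq:defY}, using Assumptions A1--A2 to guarantee that $Y$ is of class (D) with $Y_\infty = \lim_t Y_t$, so that the Snell envelope $V_t := \esssup_{\tau \geq t} \EE_x[Y_\tau \mid \CF_t]$ is well-defined, right-continuous, and the first entry time of the process into the set $\{V = Y\}$ is an optimal stopping time. The key point will be to identify this optimal region, in a Markovian way, with the closed set $\overline S^i = \{\bar J^i = R^i\}$ and to check that the hitting time $\tau_{\overline S^i}$ is indeed optimal for every starting point $x$ simultaneously, which is exactly what it means for $(0,\overline S^i)$ to be a pbr.

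First I would record that $\overline S^i$ is closed: by Proposition \ref{geneprop}(a) we have $\bar J^i \geq R^i$ everywhere, and both $\bar J^i$ and $R^i$ are continuous on $\CI$ --- continuity of $\bar J^i$ being a standard consequence of the optimal stopping representation \eqref{optstop1} together with continuity of $X$ in its initial condition and the uniform integrability from A1 (this is the kind of regularity result one borrows from the general theory, e.g. via the value function being both lower semicontinuous as a sup of continuous maps $x \mapsto \EE_x[Y_\tau]$ and upper semicontinuous by a standard argument) --- so $\overline S^i = \{\bar J^i - R^i \leq 0\}$ is closed. Second, I would use the dynamic programming / Snell envelope characterization to show that $\bar J^i(X_t) \mathrm e^{-rt}$ (suitably corrected by the already-accumulated $G^i$ term) is the Snell envelope of $Y$, so that the optimal stopping time is the first time the Snell envelope meets $Y$; by the Markovian structure of the problem (the process $X$ is time-homogeneous and the obstacle $R^i$, the running reward $G^i\,\mathrm d\Gamma^j$, and $\Lambda^j$ are all functions of the path of $X$ through the Markov strategy $(\mu^j,S^j)$), this stopping time coincides with $\tau_{\overline S^i} = \inf\{t : X_t \in \overline S^i\}$. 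One has to be a little careful at points of $S^j$: there the obstacle behaves like $G^i$ rather than $R^i$ because of the jump in $\Gamma^j$, but by \eqref{eq:inclusions} on $S^j \cap \overline S^i$ we have $G^i = R^i$, so stopping on $\overline S^i$ yields the same payoff whether one ``reads'' the obstacle as $R^i$ or as $G^i$, and the identification goes through.

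Third, I would verify optimality of $\tau_{\overline S^i}$ for all $x$. The inclusion $\bar J^i(x) \geq J^i(x,(0,\overline S^i),(\mu^j,S^j))$ is trivial. For the reverse, one uses the supermartingale property of the Snell envelope up to $\tau_{\overline S^i}$ together with the fact that on $\{t < \tau_{\overline S^i}\}$ we have $X_t \notin \overline S^i$, i.e. $\bar J^i(X_t) > R^i(X_t)$, so the Snell envelope strictly exceeds the obstacle and is therefore a martingale there (this is the standard argument that the envelope is a martingale before the optimal stopping region is reached); optional stopping then yields $\bar J^i(x) = \EE_x[\,\cdot\,] = J^i(x,(0,\overline S^i),(\mu^j,S^j))$, using A2 to control the behavior at $\infty$ on $\{\tau_{\overline S^i} = \infty\}$. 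Since $(0,\overline S^i)$ is a Markov strategy in the sense of Theorem \ref{representation} (it is just the hitting time of a closed set, the pure stopping case), this establishes that it is a pbr.

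The main obstacle I anticipate is the rigorous identification of the first-hitting time $\tau_{\overline S^i}$ with the abstract optimal stopping time coming from the Snell envelope, i.e. showing that the continuation region of \eqref{optstop1} is exactly $\{\bar J^i > R^i\}$ and that entry into it is optimal. This requires regularity of $\bar J^i$ (continuity) and a careful treatment of the running-reward term $\int_{[0,t)} \mathrm e^{-rs} G^i(X_s)\,\mathrm d\Gamma^j_s$ and of the discontinuities of $\Lambda^j$ at times when $X$ enters $S^j$; the payoff process $Y$ is not continuous but only right-continuous with left limits, so one must use the version of the optimal stopping theorem valid for such processes (e.g. the class (D) theory à la El Karoui), and check that the left-limit issue at $\tau_{S^j}$ does not create a gap --- which is again precisely where \eqref{eq:inclusions} is used. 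I would also flag that this proposition is essentially a reprise, in the present more general setting, of the corresponding statement in \cite{DGM2}, so part of the work is checking that the weaker hypotheses here (A0--A2, unbounded $\CI$ with natural boundaries, possibly unbounded rewards) still permit the same argument, the uniform integrability from A1 being the crucial enabler.
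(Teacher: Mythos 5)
Your overall strategy is the same as the paper's (Snell envelope of the class-(D) process $Y$, identification of the envelope with the process $\widehat Z_t=\int_{[0,t]}\mathrm e^{-rs}G^i(X_s)\,\mathrm d\Gamma^j_s+\Lambda^j_t\mathrm e^{-rt}\bar J^i(X_t)$, then optimality of the hitting time of $\overline S^i$), but two steps as you state them do not go through. First, your argument for closedness of $\overline S^i$ rests on the claim that $\bar J^i$ is continuous on all of $\CI$. That is false in general: at a point $x_0\in S^j$ one has $\bar J^i(x_0)=G^i(x_0)$ (Proposition \ref{geneprop}(b)), while from a nearby starting point $x\notin S^j$ player $i$ retains the option of moving away from $S^j$ and collecting, say, a large value of $R^i$ just outside $S^j$; so $\limsup_{x\to x_0}\bar J^i(x)$ can strictly exceed $\bar J^i(x_0)$ and upper semicontinuity fails at $S^j$. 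The paper accordingly proves continuity only on $\CI\setminus S^j$, and closedness of $\overline S^i$ at a limit point $x\in S^j$ is obtained differently: by Proposition \ref{geneprop}(c), if $R^i(x)<G^i(x)$ then $\bar J^i>R^i$ on a whole neighborhood of $x$, which excludes $x$ as a limit of points of $\overline S^i$; otherwise $G^i(x)=R^i(x)$ and $\bar J^i(x)=G^i(x)=R^i(x)$, so $x\in\overline S^i$. You need this (or a proof of lower semicontinuity of $\bar J^i$ at points of $S^j$); global continuity is not available.

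Second, the identification of the optimal stopping time is not quite as you describe. The first time the Snell envelope meets the obstacle is $\tau_{\overline S^i}\wedge\tau_{S^j}$, not $\tau_{\overline S^i}$: for $t\geq\tau_{S^j}$ one has $\Lambda^j_t=0$, so $Z_t=\widehat Z_t=\bar Y_t$ identically, and in particular on the stochastic interval $(\tau_{S^j},\tau_{\overline S^i})$ the envelope does \emph{not} strictly exceed the obstacle even though $\bar J^i(X_t)>R^i(X_t)$ there. Your assertion that ``the Snell envelope strictly exceeds the obstacle and is therefore a martingale'' on $\{t<\tau_{\overline S^i}\}$ is therefore wrong on that region (the martingale property holds there only because $Z$ is frozen after $\tau_{S^j}$). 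One must argue separately, as the paper does by splitting into the cases $\tau_{\overline S^i}<\tau_{S^j}$, $=\tau_{S^j}$, $>\tau_{S^j}$ and using $G^i=R^i$ on $\overline S^i\cap S^j$, that $\EE_x[Z_{\tau_{\overline S^i}\wedge\tau_{S^j}}]=\EE_x[Y_{\tau_{\overline S^i}}]$, so that the genuinely Markovian time $\tau_{\overline S^i}$ is also optimal. Finally, note that the equality $Z=\widehat Z$ itself, which you treat as borrowed regularity, is a substantive step (the paper proves it in the appendix via the decomposition of stopping times and the multiplicative property \eqref{mark}); at minimum it should be flagged as requiring proof rather than as standard.
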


\begin{proof}
The proof consists of five steps.

\subparagraph{Step 1}

Observe first from $G^i \geq R^i$ that the process $Y:=(Y_t)_{t \geq 0}$ defined in \eqref{eq:defY} has c\`agl\`ad and lsc trajectories; specifically, the only potential discontinuity is at $\tau_{S^j}$ whenever $\tau_{S^j}< \infty$. Observe also that the value function of problem \eqref{optstop1} is not modified if we replace $Y$ with its right-continuous modification $\bar Y$, defined by
\begin{align}\label{eq:def_barY}
\bar Y_t:=Y_{t+}=\int_{[0,t]}\mathrm e^{-rs}G^i(X_s)\, \mathrm d\Gamma^j_s+ \mathrm e^{-rt}R^i(X_t)\Lambda^j_{t}, \quad t\geq 0.
\end{align}
Indeed, $\bar Y \geq Y$ and thus
\begin{align}\label{eq:zzbar1}
\bar J^i(x)\leq \sup_{\tau \in \CT}\,\EE_x\hskip 0.3mm[\bar Y_\tau],
\end{align}
and the reverse inequality follows from the fact that, by dominated convergence,
\begin{align}\label{eq:zzbar2}
\EE_x[\bar Y_\tau] =\lim_{n\to \infty} \EE_x \hskip 0.3mm [Y_{\tau+\frac{1}{n}}].
\end{align}
Notice that $\bar Y$ has c\`adl\`ag and usc trajectories. These remarks lead us to consider the problem
\begin{align}\label{optstop2}
\bar J^i(x)= \sup_{\tau \in \CT}\,\EE_x \hskip 0.3mm [\bar Y_\tau].
\end{align}
By Assumption A1, the processes $Y$ and $\bar Y$ are of class (D). Therefore, $\bar J^i$ is analytically measurable \cite[Proposition 2.4]{EKLM}, and thus universally measurable. In particular, for each $\tau\in \CT$, $\bar J^i(X_\tau)$ defines a random variable on $\Omega$. For each $x\in \CI$, let $Z^x$ denote the Snell envelope of $\bar Y$ on the stochastic basis $(\Omega,\CF,(\CF_t)_{t \geq 0},\PP_x)$. It is known (see \cite[Appendix 1, \S22]{DMB} and \cite[Theorem 2.28 and Proposition 2.29]{EK}) that $Z^x$ is a strong optional supermartingale of class (D) with almost surely c\`{a}dl\`{a}g paths and that
\begin{align*}
\forall \tau \in \CT, \; Z^x_\tau= \mathop{\mathrm {ess \, sup}} \limits _{\rho \geq \tau , \, \rho \in \CT} \, \EE_x \hskip 0.3mm[ \bar Y_{\rho} \! \mid \! \CF_{\tau}] .
\end{align*}
Using the same argument as in \eqref{eq:zzbar1}--\eqref{eq:zzbar2} with conditional expectations, one can check that $Z^x$ is also the Snell envelope of $Y$.

\subparagraph{Step 2}

We first claim that the Snell envelope $Z^x$ is indistinguishable under $\PP_x$ from the process $\widehat Z$ defined by
\begin{align} \label{eq:def_hatZ}
\widehat Z_t:=\int_{[0,t]}\mathrm e^{-rs}G^i(X_s)\, \mathrm d\Gamma^j_s+ \Lambda^j_{t}\,\mathrm e^{-rt}\bar J^i(X_t), \quad t \geq 0.
\end{align}
First, it follows from \cite{EKLM} that, for every stopping time $\tau$ of the canonical filtration $(\CF^0_t)_{t\geq 0}$,
\begin{align} \label{eq:Z=hatZ}
Z^x_{\tau}=\widehat Z_{\tau} \; \text{$\PP_x$-a.s.}
\end{align}
The proof of \eqref{eq:Z=hatZ} is detailed in the Appendix for the sake of completeness.
Given \eqref{eq:Z=hatZ}, to prove that $Z^x$ and $\widehat Z$ are indistinguishable, it is sufficient to show that  $\hat Z$ has c\`adl\`ag trajectories. To this end, we prove that  $\bar J^i$ is continuous on $\CI \setminus S^j$.

As for the continuity of $\bar J^i$ on $\CI \setminus S^j$, let $x\notin S^j$ and, for each $n\in \mathbb N$, consider the $(\CF^0_t)_{t \geq 0}$-stopping time $\tau_n$ defined as the first exit time by $X$ of an interval $(x-\delta,x+\varepsilon_n)$ for an arbitrary sequence $\varepsilon_n \to 0$ in $\mathbb R_+$ and a fixed $\delta>0$. We have $Z^x_0=\bar J^i(x)$ $\PP_x$-a.s., and, as $Z^x$ is $\PP_x$-a.s.\! right-continuous and of class (D),
\begin{align*}
\bar J^i(x)&=Z^x_0
\\
&= \lim_{n \to \infty} \EE_x\hskip 0.3mm[Z^x_{\tau_n}]
\\
&=\lim_{n \to \infty} \EE_x\hskip 0.3mm[\widehat Z_{\tau_n}]
\\
&=\lim_{n \to \infty} \EE_x\! \left[\int_{[0,\tau_n]}\mathrm e^{-rs}G^i(X_s)\, \mathrm d\Gamma^j_s+ \Lambda^j_{\tau_n}\mathrm e^{-r\tau_n}\bar J^i(X_{\tau_n})\right] \hskip -1mm,
\end{align*}
where the third equality follows from \eqref{eq:Z=hatZ}. Using that $\bar J^i(X_{\tau_n})=\bar J^i(x+\varepsilon_n)$ on $\{X_{\tau_n}=x+\varepsilon _n\}$, we have, for some constant $C>0$,
\begin{align*}
\left| \EE_x \! \left[ \int_{[0,\tau_n]}\mathrm e^{-rs}G^i(X_s)\, \mathrm d\Gamma^j_s+ \Lambda^j_{\tau_n}\mathrm e^{-r\tau_n}\bar J^i(X_{\tau_n})\right] - \bar J^i(x+ \varepsilon_n) \right|  \leq C \EE_x \hskip 0.3mm [\Gamma^j_{\tau_n}+ \indic_{X_{\tau_n}=x- \delta}] \to  0.
\end{align*}
This implies that $\lim_{n \to \infty} \bar J^i(x+\varepsilon_n)=\bar J^i(x)$, and thus that $\bar J^i$ is right-continuous at $x$ as the sequence $(\varepsilon_n)_{n \geq 0}$ in $\mathbb R_+$ is arbitrary. The proof of the left-continuity of $\bar J^i$ on $\CI \setminus S^j$ is similar and is thus omitted.

The continuity of $\bar J^i$ on $\CI \setminus S^j$ implies that $\widehat Z$ has c\`{a}dl\`{a}g trajectories, with a single potential discontinuity at $\tau_{S^j}$. Therefore, the processes $Z^x$ and $\widehat Z$ are indistinguishable. The claim follows. As the Snell envelope is defined up to an evanescent set, for all $x\in \CI$,  $\widehat Z$ is the Snell envelope of $\bar Y$ on the stochastic basis $(\Omega,\CF,(\CF_t)_{t \geq 0},\PP_x)$. We will use this fact in the subsequent steps.

\subparagraph{Step 3}

We now prove that $\overline S\,\!^i$ is closed. Consider a sequence $(x_n)_{n \geq 0}$ in $\overline S\,\!^i$ converging to $x \in\CI$. We need to show that $x\in \overline S\,\!^i$. If $x\notin S^j$, then $x\in \overline S\, \! ^i$ as $\bar J^i$ is continuous on $\CI \setminus S^j$ by Step 2. If $x\in S^j$, then it must be that $G^i(x)=R^i(x)$; otherwise, by Proposition  \ref{geneprop}(c), there would exist a neighborhood of $x$ which does not intersect $\overline S\!\,^i$, a contradiction. By Proposition \ref{geneprop}(b), $\bar J^i(x)=G^i(x)$ as $x\in S^j$. Thus $\bar J^i(x)=G^i(x)=R^i(x)$, which implies that $x\in \overline S\!\,^i$. This concludes the proof that $\overline S\,\!^i$ is closed.

\subparagraph{Step 4}

We next claim that $\tau_{\overline S\,\!^i}\wedge \tau_{S^j}$ is optimal for \eqref{optstop2}. As $\bar Y$ has usc trajectories and is of class (D), it follows from \cite[Theorem 2.41]{EK} that
\begin{align*}
\tau^* : =\inf \hskip 0.5mm \{ t \geq 0 : \widehat Z_t=\bar Y_t \}
\end{align*}
is the smallest optimal stopping time for \eqref{optstop2}. In turn \cite[Theorem 2.31]{EK}, the optimality of $\tau^*$ in \eqref{optstop2} is equivalent to the facts that:
\begin{itemize}

\item

$\widehat Z$ is a martingale up to $\tau^*$;

\item

$\widehat Z_{\tau^*}=\bar Y_{\tau^*}$.

\end{itemize}
Let us focus on the second condition. From \eqref{eq:def_barY} and \eqref{eq:def_hatZ}, $\widehat Z_t=\bar Y_t$ is equivalent to
\begin{align*}
\Lambda^j_t [\bar J^i(X_t)-R^i(X_t)]=0.
\end{align*}
Recalling that $\inf \hskip 0.5mm \{ t \geq 0 : \Lambda^j_t=0\}=\tau_{S^j}$, we deduce that
\begin{align*}
\mbox{$\tau^*= \inf \hskip 0.5mm\{t \geq 0 : \bar J^i(X_t)=R^i(X_t)\} \wedge \tau_{S^j} =\tau_{\overline S\!\,^i}\wedge \tau_{S^j}$ a.s.}
\end{align*}
The claim follows.

\subparagraph{Step 5}

We finally prove that $\tau_{\overline S\,\!^i}$ is optimal for \eqref{optstop1}. For each $x \in \CI$,
\begin{align*}
\bar J^i(x) &=\EE_x\hskip 0.3mm[ \bar Y_{\tau^*}] \allowdisplaybreaks
\\
&= \EE_x\hskip 0.3mm[ \widehat Z_{\tau^*}]
\\
&=\EE_x\! \left[ \int_{[0,\tau^*]}\mathrm e^{-rs}G^i(X_s)\, \mathrm d\Gamma^j_s+ \Lambda^j_{\tau^*} \,\mathrm e^{-r\tau^*}\bar J^i(X_{\tau^*})\right]
\\
&=\EE_x \Bigg[ \indic_{\tau_{\overline S\,\!^i}< \tau_{S^j}}\! \left[\int_{[0,\tau_{\overline S\!\,^i}]}\mathrm e^{-rs}G^i(X_s)\, \mathrm d\Gamma^j_s+ \Lambda^j_{\tau_{\overline S\,\!^i}}\mathrm e^{-r\tau_{\overline S\,\!^i}}R^i(X_{\tau_{\overline S\,\!^i}}) \right]
\\
& \hskip 0.78cm +\indic_{\tau_{\overline S\,\!^i}=\tau_{S^j}} \! \left[\int_{[0,\tau_{\overline S\,\! ^i}]}\mathrm e^{-rs}G^i(X_s)\, \mathrm d\Gamma^j_s+ \Lambda^j_{\tau_{\overline S\!\,^i}}\mathrm e^{-r\tau_{\overline S\,\!^i}}R^i(X_{\tau_{\overline S\!\,^i}}) \right]
\\
&  \hskip 0.78cm +\indic_{\tau_{\overline S\,\! ^i}> \tau_{S^j}}\! \left[ \int_{[0,\tau_{S^j}]}\mathrm e^{-rs}G^i(X_s)\, \mathrm d\Gamma^j_s+ \Lambda^j_{\tau_{S^j}}\mathrm e^{-r\tau_{S^j}}\bar J^i(X_{\tau_{S^j}}) \right] \!  \Bigg]\hskip -0.3mm ,
\end{align*}
where the first and second equalities follow from the optimality of $\tau^*$ in \eqref{optstop2}, and the fourth equality follows from $\bar J^i(X_{\tau_{\overline S\,\! ^i}})=R^i(X_{\tau _{\overline S\,\! ^i}})$ as $\overline S\!\,^i$ is closed.

Let us examine the three terms in the last expression separately. For the first one, using that $\Lambda^j$ is continuous on $[0, \tau_{S_j})$, we obtain
\begin{align*}
&\indic_{\tau_{\overline S\,\!^i}< \tau_{S^j}}\! \left[\int_{[0,\tau_{\overline S\!\,^i}]}\mathrm e^{-rs}G^i(X_s)\, \mathrm d\Gamma^j_s+ \Lambda^j_{\tau_{\overline S\,\!^i}}\mathrm e^{-r\tau_{\overline S\,\!^i}}R^i(X_{\tau_{\overline S\,\!^i}}) \right]
\\
&=\indic_{\tau_{\overline S\,\!^i}< \tau_{S^j}}\! \left[\int_{[0,\tau_{\overline S\!\,^i})}\mathrm e^{-rs}G^i(X_s)\, \mathrm d\Gamma^j_s+ \Lambda^j_{\tau_{\overline S\,\!^i}-}\mathrm e^{-r\tau_{\overline S\,\!^i}}R^i(X_{\tau_{\overline S\,\!^i}}) \right]
\\
&= \indic_{\tau_{\overline S\,\!^i}< \tau_{S^j}}Y_{\tau_{\overline S\,\!^i}}.
\end{align*}
For the second one, using that $G^i=R^i$ on $S^j \cap \overline S\,\!^i$, we obtain
\begin{align*}
&\indic_{\tau_{\overline S\,\!^i}= \tau_{S^j}}\! \left[\int_{[0,\tau_{\overline S\!\,^i}]}\mathrm e^{-rs}G^i(X_s)\, \mathrm d\Gamma^j_s+ \Lambda^j_{\tau_{\overline S\,\!^i}}\mathrm e^{-r\tau_{\overline S\,\!^i}}R^i(X_{\tau_{\overline S\,\!^i}}) \right] \allowdisplaybreaks
\\
&=\indic_{\tau_{\overline S\,\!^i} = \tau_{S^j}}\! \left[ \int_{[0,\tau_{\overline S\,\!^i})}\mathrm e^{-rs}G^i(X_s)\, \mathrm d\Gamma^j_s+ \mathrm e^{-r\tau_{\overline S\,\!^i}}(\Lambda^j_{\tau_{\overline S\,\!^i}-}-\Lambda^j_{\tau_{\overline S\!\,^i}})\,G^i(X_{\tau_{\overline S\,\!^i}})+ \Lambda^j_{\tau_{\overline S\,\!^i}}\mathrm e^{-r\tau_{\overline S\!\,^i}}R^i(X_{\tau_{\overline S\,\!^i}})\right]
\\
&=\indic_{\tau_{\overline S\,\!^i}= \tau_{S^j}} \! \left[\int_{[0,\tau_{\overline S\,\!^i})}\mathrm e^{-rs}G^i(X_s)\, \mathrm d\Gamma^j_s+ \Lambda^j_{\tau_{\overline S\!\,^i}-}\mathrm e^{-r\tau_{\overline S\,\!^i}}R^i(X_{\tau_{\overline S\,\!^i}})\right]
\\
&= \indic_{\tau_{\overline S\,\!^i}= \tau_{S^j}}Y_{\tau_{\overline S\,\!^i}}.
\end{align*}
For the third one, using that $\Lambda^j=0$ on $[\tau_{S^j},\infty)$, we obtain
\begin{align*}
& \indic_{\tau_{\overline S\,\!^i}> \tau_{S^j}} \! \left[\int_{[0,\tau_{S^j}]}\mathrm e^{-rs}G^i(X_s)\, \mathrm d\Gamma^j_s+ \Lambda^j_{\tau_{S^j}} \mathrm e^{-r\tau_{S^j}}R^i(X_{\tau_{S^j}})\right]
\\
&=\indic_{\tau_{\overline S\,\!^i}> \tau_{S^j}} \int_{[0,\tau_{S^j}]}\mathrm e^{-rs}G^i(X_s)\, \mathrm d\Gamma^j_s
\\
&= \indic_{\tau_{\overline S\,\! ^i}> \tau_{S^j}} \! \left[ \int_{[0,\tau_{\overline S\,\!^i})}\mathrm e^{-rs}G^i(X_s)\, \mathrm d\Gamma^j_s+ \Lambda^j_{\tau_{\overline S\,\! ^i}-}\mathrm e^{-r\tau_{\overline S\,\! ^i}}R^i(X_{\tau_{S^i}+})\right]
\\
&= \indic_{\tau_{\overline S\,\! ^i}> \tau_{S^j}}Y_{\tau_{\overline S\,\!^i}}.
\end{align*}
Gathering these three equalities, we obtain:
\begin{align*}
\bar J^i(x) =\EE_x \! \left[ \left(\indic_{\tau_{\overline S\, \! ^i}<\tau_{S^j}}+\indic_{\tau_{\overline S\!\, ^i}=\tau_{S^j}}+\indic_ {\tau_{ \overline S\,\!^i}>\tau_{S^j}} \right) \! Y_{\tau_{\overline S\,\!^i}}\right] =\EE_x \! \left[ Y_{\tau_{\overline S\,\!^i}}\right]\!,
\end{align*}
from which it follows that $\tau_{\overline S\,\!^i}$ is optimal in problem \eqref{optstop1} and thus that $(0,\overline S\,\!^i)$ is a pbr to $(\mu^j,S^j)$. Hence the result.
\end{proof}

It follows from this result and the definition of $\overline S\,\!^i$ that $\overline S\,\!^i$ is the largest set on which it is optimal for player $i$ to stop in a pbr to $(\mu^j, S^j)$. Equilibrium may however require that player $i$ stop on a smaller set, and possibly mix on the complement of this set. Thus we need to characterize all pbrs to $(\mu^j, S^j)$, a task to which we now turn.

\subsection{Characterization of Pbrs}

Define the set
\begin{align}
\Sigma^i : =\{ S^i \subset \CI \text{ closed}: (0,S^i) \text{ is a pbr to } (\mu^j,S^j) \}. \label{purepbrcorr}
\end{align}
The characterization of pbrs relies on the following lemma.

\begin{lemma}\label{lem:sigma_i}
The set $\Sigma^i$ is a nonempty lattice$,$ with smallest element $\underline S^i$ and largest element $\overline S\,\!^i,$ and
\begin{align*}
\Sigma^i=\{ S^i \subset \CI \text{ closed} : \underline S^i \subset S^i \subset \overline S \,\! ^i\}.
\end{align*}
\end{lemma}

\begin{proof}
By Proposition \ref{prop:existence_pbr}, $\overline S\!\,^i \in \Sigma^i$, so that $\Sigma^i$ is nonempty. Moreover, by Proposition \ref{geneprop}(ii), every element of $\Sigma^i$ is a subset of $\overline S\,\!^i$. Now, let $S^i \subset \overline S\, \! ^i$ be closed and recall that $R^i=G^i$ on $S^j \cap  \overline S\,\!^i$ by \eqref{eq:inclusions}. We have a.s.
\begin{align}
\widehat Z_{\tau_{S^i}}&=\int_{[0,\tau_{S^i}]}\mathrm e^{-rs}G^i(X_s)\, \mathrm d\Gamma^j_s+ \Lambda^j_{\tau_{S^i}}\mathrm e^{-r\tau_{S^i}}\bar J^i(X_{\tau_{S^i}}) \nonumber
\\
&=\int_{[0,\tau_{S^i}]}\mathrm e^{-rs}G^i(X_s)\, \mathrm d\Gamma^j_s+ \Lambda^j_{\tau_{S^i}}\mathrm e^{-r\tau_{S^i}}R^i(X_{\tau_{S^i}}) \nonumber
\\
&=\int_{[0,\tau_{S^i})}\mathrm e^{-rs}G^i(X_s)\, \mathrm d\Gamma^j_s+ \Lambda^j_{\tau_{S^i}-}\mathrm e^{-r\tau_{S^i}}R^i(X_{\tau_{S^i}}) \nonumber
\\
&= Y_{\tau_{S^i}}, \label{eq:eqZY}
\end{align}
where the first equality follows from \eqref{eq:def_hatZ}--\eqref{eq:Z=hatZ}, the second equality follows from $X_{\tau_{S^i}} \in \overline S\,\! ^i$, and the third equality follows from the fact that either $\Delta \Gamma^j_{\tau_{S^i}}=0$ or $\tau_{S^i}=\tau_{S^j}$, the latter implying that $R^i(X_{\tau_{S^i}})=G^i(X_{\tau_{S^i}})$ as in the proof of Proposition \ref{prop:existence_pbr}. The remainder of the proof consists of two steps.

\subparagraph{Step 1}

We first show that $\Sigma^i$ is stable by intersection, i.e., that, given two subsets $S^i$ and $\widehat S^i$ of $\Sigma^i$, the stopping time $\tau_{S^i\cap\widehat S^i}=\tau_{S^i}\vee \tau_{\widehat S^i}$ is optimal in \eqref{optstop1}. By \cite[Theorem 2.31]{EK}, it is sufficient to prove that $Y_{\tau_{S^i\cap\widehat S^i}}=\widehat Z_{\tau_{S^i\cap\widehat S^i}}$ and that $\widehat Z$ is a martingale up to $\tau_{S^i\cap\widehat S^i}$. The first property follows from \eqref{eq:eqZY}. As for the martingale property, because $\widehat Z$ is a supermartingale, we only need to verify that for each $x \in \CI$, $\EE_x \! \left[\widehat Z_{\tau_{S^i \cap \widehat S^i}}\right]=\EE_x \hskip 0.3mm[\widehat Z_0]$. We have
\begin{align*}
\EE_x\!\left[ \widehat Z_{\tau_{S^i \cap \widehat S^i}}\right]&= \EE_x\!\left[ \widehat Z_{\tau_{S^i}}\indic_{\tau_{\widehat S^i} \leq \tau_{S^i}} + \widehat Z_{\tau_{\widehat S^i}}\indic_{\tau_{\widehat S^i} > \tau_{S^i}}\right]
\\
&=\EE_x\!\left[ \widehat Z_{\tau_{\widehat S^i}}\indic_{\tau_{\widehat S^i} \leq \tau_{S^i}} +\widehat Z_{\tau_{\widehat S^i}}\indic_{\tau_{\widehat S^i} > \tau_{S^i} }\right]
\\
&=\EE_x\!\left[ \widehat Z_{\tau_{\widehat S^i}}\right]
\\
&=\EE_x\hskip 0.3mm[\widehat Z_0]
\end{align*}
where the second and fourth equalities follow from the fact that $\widehat Z$ is a martingale up to $\tau_{S^i}$ and $\tau_{\widehat  S^i}$, respectively. We conclude that $\tau_{S^i\cap\widehat S^i}=\tau_{S^i}\vee \tau_{\widehat S^i}$ is indeed optimal in \eqref{optstop1} and  thus that $S^i \cap \widehat S^i \in \Sigma^i$.

\subparagraph{Step 2}

Now, let us define $\underline S^i: =\bigcap_{S^i \in \Sigma^i} S^i$. Because $\CI\setminus \underline S^i$ is the union of the open sets $\CI \setminus S^i$ for $S^i \in \Sigma^i$, which admits a countable subcover as any open subset of the real line is a Lindel\"of space, there exists a sequence $(S^i_n)_{n \geq 0}$ in $\Sigma^i$ such that $\underline S^i=\bigcap_{n \geq 0} S^i_n$. As a result, $\underline S^i$ is the intersection of the nonincreasing sequence of closed sets $(\widehat S^i_n)_{n \geq 0} := (\bigcap_{p=1}^n S^i_p)_{n \geq 0}$ in $\Sigma^i$. The sequence $(\tau_{ \widehat S^i_n})_{n \geq 0}$ is nondecreasing, and thus $\lim_{n \to \infty} \tau_{\widehat S^i_n}$ is well-defined. We claim that $\tau_{\underline S^i}=\lim_{n \to \infty} \tau_{\widehat S^i_n}$. It is clear that $\tau_{\underline S^i}\geq \lim_{n \to \infty} \tau_{\widehat S^i_n}$. If this limit is infinite, then the equality holds. If this limit is finite, then $X_{\lim_{n \to \infty} \tau_{\widehat S^i_n}}$ belongs to $\underline S^i$ and thus $\tau_{ \underline S^i}\leq \lim_{n \to \infty} \tau_{\widehat S^i_n}$, so that the equality again holds. The claim follows. Because $\tau_{\widehat S^i_n}$ is optimal in \eqref{optstop1}, it follows from \eqref{eq:eqZY} that $\bar J^i(x)= \EE_x \big[\widehat Z_{\tau_{\widehat S^i_n}}\big]= \EE_x \big[Y_{\widehat S^i_n}\big]$ for all $n \geq 0$. On the other hand, \eqref{eq:eqZY} implies $\widehat Z_{\tau_{\underline S^i}}= Y_{\tau _{\underline S^i}}$ a.s.\! and we conclude that $\bar J^i(x)= \EE_x \big[\widehat Z_{\tau_{\underline S^i}}\big]= \EE_x \big[Y_{\underline S^i}\big]$ by dominated convergence using that $Y$ is left-continuous. In particular, $\underline S^i\in \Sigma^i$ and $\underline S^i$ is the smallest element of $\Sigma^i$. It follows that $\Sigma^i\subset \{ S^i \subset \CI \text{ closed} : \underline S^i \subset S^i \subset \overline S\,\!^i\}$. To prove the reverse inclusion, it suffices to notice that, if $\underline S^i \subset S^i \subset \overline S\,\!^i$, then \eqref{eq:eqZY} implies $\widehat Z_{\tau_{S^i}}= Y_{\tau_{S^i}}$ and that $\widehat Z$ is a martingale up to $\tau_{S^i}$ as it is a martingale up to $\tau_{\underline S^i} \geq \tau_{S^i}$. The result follows.
\end{proof}

We are now ready to characterize the set of pbrs to $(\mu^j,S^j)$.

\begin{proposition}\label{prop:cacrt_pbr}
$(\mu^i,S^i)$ is a pbr to $(\mu^j,S^j)$ if and only if $\underline S^i \subset S^i \subset \overline S\,\!^i$ and $\mu^i$ is a Radon measure on $\CI \setminus S^i$ that is concentrated on $(\overline S\,\!^i \setminus S^i) \cup S^j$.
\end{proposition}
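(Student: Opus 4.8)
The plan is to prove the two implications separately, relying heavily on Proposition \ref{prop:existence_pbr}, Lemma \ref{lem:sigma_i}, and Proposition \ref{geneprop}, together with the optimality criterion from the general theory of optimal stopping (\cite{EK}, Theorem 2.31): a stopping time $\tau$ is optimal for $\sup_\tau \EE_x[Y_\tau]$ if and only if $Z_\tau = Y_\tau$ a.s.\ and $Z$ is a martingale up to $\tau$, where $Z$ is the Snell envelope of $Y$ (equivalently of $\bar Y$).

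For the "only if" direction, suppose $(\mu^i,S^i)$ is a pbr to $(\mu^j,S^j)$. The inclusion $S^i\subset \overline S^i$ is Proposition \ref{geneprop}-(ii). For $\underline S^i\subset S^i$, I would argue that a pbr $(\mu^i,S^i)$ gives rise to a pbr $(0,S^i)$: indeed, Proposition \ref{geneprop}-(iv) already records that $(0,S^i)$ is a pbr whenever $(\mu^i,S^i)$ is, so $S^i\in\Sigma^i$, and hence $\underline S^i\subset S^i$ by definition of $\underline S^i$ as the smallest element of $\Sigma^i$ (Lemma \ref{lem:sigma_i}). The constraint on $\mu^i$ comes from Proposition \ref{geneprop}-(iii): $\mathrm{supp}\,\mu^i\setminus S^j\subset\overline S^i$ and $\mathrm{supp}\,\mu^i\cap S^j\subset\{\bar J^i=G^i\}$; since $\mu^i$ lives on $\CI\setminus S^i$, the first part forces $\mathrm{supp}\,\mu^i\setminus S^j\subset \overline S^i\setminus S^i$, so altogether $\mathrm{supp}\,\mu^i\subset(\overline S^i\setminus S^i)\cup S^j$, i.e.\ $\mu^i$ is concentrated on $(\overline S^i\setminus S^i)\cup S^j$. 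Local finiteness on $\CI\setminus S^i$ is part of $(\mu^i,S^i)\in\CM(\CI)$.

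For the "if" direction, assume $\underline S^i\subset S^i\subset\overline S^i$ and $\mu^i$ is locally finite on $\CI\setminus S^i$, concentrated on $(\overline S^i\setminus S^i)\cup S^j$. The key point is that adding such a $\mu^i$ does not change the payoff relative to the pure strategy $(0,S^i)$, which is already a pbr by Lemma \ref{lem:sigma_i} (since $S^i$ is in the interval $[\underline S^i,\overline S^i]$ of closed sets, which equals $\Sigma^i$). Concretely, I would use Proposition \ref{geneprop}-(iv) directly: it states that $(\tilde\mu^i,S^i)$ is a pbr to $(\mu^j,S^j)$ for \emph{any} $\tilde\mu^i$ with $\mathrm{supp}\,\tilde\mu^i\subset\overline S^i\cup S^j$. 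Since $(0,S^i)$ is a pbr (established in Lemma \ref{lem:sigma_i}), applying (iv) with $\tilde\mu^i=\mu^i$ — whose support lies in $(\overline S^i\setminus S^i)\cup S^j\subset\overline S^i\cup S^j$ — yields that $(\mu^i,S^i)$ is a pbr. One should double-check that the hypotheses of (iv) as stated require only a support condition (they do), so the only thing to verify is that $\mu^i$ is a bona fide element of $\CM(\CI)$ with explosion set exactly $S^i$, which is guaranteed by local finiteness of $\mu^i$ on $\CI\setminus S^i$ plus the convention \eqref{eq:muS_to_m}.

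The main obstacle, if Proposition \ref{geneprop}-(iv) is used as a black box, is essentially bookkeeping: making sure the two descriptions "$(\mu,S)$ with $\mu$ locally finite on $\CI\setminus S$" and "element of $\CM(\CI)$" line up (the identification \eqref{eq:m_to_muS}), and that $\Sigma^i=[\underline S^i,\overline S^i]$ from Lemma \ref{lem:sigma_i} is invoked to know $(0,S^i)$ is a pbr for \emph{every} such $S^i$, not just for $\overline S^i$. If instead one wants a self-contained argument not leaning on (iv), the real work is showing $\EE_x[Z_{\tau_{S^i}}]=\bar J^i(x)$ when randomization along $\mu^i$ is layered on top of the hitting time $\tau_{S^i}$; here one decomposes the payoff of the strategy $(\mu^i,S^i)$ along the jumps and continuous part of its csf $\Lambda^i$, and uses that on $\mathrm{supp}\,\mu^i\cap(\CI\setminus S^j)$ one has $\bar J^i=R^i$ (so stopping there, weighted by $\mathrm d\Gamma^i$, still realizes the value $Y$), while on $\mathrm{supp}\,\mu^i\cap S^j$ one has $\bar J^i=G^i=R^i$ there too by \eqref{eq:inclusions}, so again no loss — exactly the kind of three-term case split already carried out at the end of the proof of Proposition \ref{prop:existence_pbr}.
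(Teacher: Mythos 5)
Your proposal is correct and follows essentially the same route as the paper: both directions are reduced to Proposition \ref{geneprop}-(ii)--(iv) together with the characterization $\Sigma^i=\{S^i \text{ closed}: \underline S^i\subset S^i\subset\overline S^i\}$ from Lemma \ref{lem:sigma_i}, exactly as in the paper's (very short) argument. The self-contained alternative you sketch at the end is not needed; the black-box use of (iv) is precisely what the paper does.
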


\begin{proof}
If $(\mu^i,S^i)$ is a pbr to $(\mu^j,S^j)$, then, by Proposition \ref{geneprop}(iv), $(0,S^i)$ is also a pbr to $(\mu^j,S^j)$ and thus $S^i \in \Sigma^i$, which proves the inclusions by Lemma \ref{lem:sigma_i}. The second point follows directly from Proposition \ref{geneprop}-(iii--iv).
Conversely, if $\underline S^i \subset S^i \subset \overline S\,\!^i$ and $\mu^i$ is a Radon measure on $\CI \setminus S^i$ that is concentrated on $(\overline S\!\,^i \setminus S^i) \cup S^j$, then $(0,S^i)$ is a pbr to $(\mu^j,S^j)$ by Lemma \ref{lem:sigma_i}, and $(\mu^i,S^i)$ is a pbr to $(\mu^j,S^j)$ by Proposition \ref{geneprop}-(iv). Hence the result.
\end{proof}

\subsection{The Correspondence $\Phi$}

We now consider a correspondence $\Phi^i$ whose values are nonempty subsets of the values of $PBR^i$. Specifically, for each $(\mu^j,S^j)\in\CM(\CI)$, let
\begin{align} \label{defPhi}
\Phi^i(\mu^j,S^j):= \{ (\mu^i,S^i) \in \CM(\CI) : \underline S^i \subset S^i \subset \overline S\,\!^i \text{ and $\mu^i$ is concentrated on $\overline S\!\,^i\setminus S^i$} \}.
\end{align}
We will apply the fixed-point Theorem \ref{thm:fixed_point} to the correspondence $\Phi : \CM(\CI) \times \CM(\mathcal I) \twoheadrightarrow \mathcal M(\mathcal I) \times \mathcal M(\mathcal I)$ defined by
\begin{align} \label{eq:def_phi_product}
\Phi((\mu^1,S^1),(\mu^2,S^2)):=\Phi^1(\mu^2,S^2)\times \Phi^2(\mu^1,S^1).
\end{align}
Our approach is justified by the fact that $\Phi^i$ takes values in the set of pbrs of player $i$, as the following result shows.

\begin{lemma}\label{lem:caractPhi}
For all $(\mu^j,S^j)\in \CM(\CI),$
\begin{align} \label{egalPhi}
\Phi^i(\mu^j,S^j)= \{ (\mu^i,S^i) \in PBR^i(\mu^j,S^j) : \mu^i(S^j \cap\{G^i >R^i\})=0 \}.
\end{align}
\end{lemma}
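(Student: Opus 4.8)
The goal is to show the two sets in \eqref{egalPhi} coincide. One inclusion is essentially immediate from what has already been established: if $(\mu^i,S^i)\in\Phi^i(\mu^j,S^j)$, then $\underline S^i\subset S^i\subset\overline S^i$ and $\mu^i$ is concentrated on $\overline S^i\setminus S^i$, so in particular $\mu^i$ is a locally finite measure on $\CI\setminus S^i$ concentrated on $(\overline S^i\setminus S^i)\cup S^j$, hence $(\mu^i,S^i)\in PBR^i(\mu^j,S^j)$ by Proposition \ref{prop:cacrt_pbr}; and since $\mu^i$ charges only $\overline S^i\setminus S^i$, while $\overline S^i\cap S^j\subset\{G^i=R^i\}$ by \eqref{eq:inclusions}, we get $\mu^i(S^j\cap\{G^i>R^i\})\le\mu^i(\overline S^i\cap S^j\cap\{G^i>R^i\})=0$. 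So the left-hand set is contained in the right-hand set.

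For the reverse inclusion, take $(\mu^i,S^i)\in PBR^i(\mu^j,S^j)$ with $\mu^i(S^j\cap\{G^i>R^i\})=0$. By Proposition \ref{prop:cacrt_pbr} we already know $\underline S^i\subset S^i\subset\overline S^i$ and that $\mu^i$ is concentrated on $(\overline S^i\setminus S^i)\cup S^j$. The only thing left to check is that $\mu^i$ is in fact concentrated on $\overline S^i\setminus S^i$, i.e. that $\mu^i$ puts no mass on $S^j\setminus(\overline S^i\setminus S^i)=S^j\setminus\overline S^i$ (note $S^i\subset\overline S^i$, so $\overline S^i\setminus S^i$ already contains $S^j\cap\overline S^i\setminus S^i$, and the part of $S^j$ outside $\overline S^i$ is $S^j\setminus\overline S^i$). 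But $S^j\setminus\overline S^i\subset\{G^i>R^i\}$ by \eqref{eq:inclusions}, hence $\mu^i(S^j\setminus\overline S^i)\le\mu^i(S^j\cap\{G^i>R^i\})=0$. Combining this with the fact that $\mu^i$ is concentrated on $(\overline S^i\setminus S^i)\cup S^j$ and on $\CI\setminus S^i$, we conclude $\mu^i$ is concentrated on $\overline S^i\setminus S^i$, so $(\mu^i,S^i)\in\Phi^i(\mu^j,S^j)$.

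The argument is short and the main (minor) obstacle is purely bookkeeping: keeping straight which pieces of the decomposition $\CI=S^i\sqcup(\overline S^i\setminus S^i)\sqcup(\CI\setminus\overline S^i)$ the measure $\mu^i$ can charge, and invoking the two inclusions in \eqref{eq:inclusions} ($S^j\cap\overline S^i\subset\{G^i=R^i\}$ and $S^j\setminus\overline S^i\subset\{G^i>R^i\}$) in the right places. No new analytic input is needed beyond Proposition \ref{prop:cacrt_pbr} and \eqref{eq:inclusions}; the content of the lemma is just that the condition ``$\mu^i$ concentrated on $\overline S^i\setminus S^i$'' and the condition ``$\mu^i$ a pbr-measure with no mass on $S^j\cap\{G^i>R^i\}$'' are two descriptions of the same admissible measures, with the equivalence hinging entirely on the behaviour of $G^i-R^i$ on $S^j$.
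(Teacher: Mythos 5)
Your proof is correct and follows essentially the same route as the paper's: both directions rest on Proposition \ref{prop:cacrt_pbr} and the two inclusions in \eqref{eq:inclusions}, the only cosmetic difference being that you invoke the ``if'' direction of Proposition \ref{prop:cacrt_pbr} directly where the paper passes through Proposition \ref{geneprop}-(iv). The loose set identity $S^j\setminus(\overline S^i\setminus S^i)=S^j\setminus \overline S^i$ is harmless since, as you note, $\mu^i$ puts no mass on $S^i$.
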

\begin{proof}
Let $(\mu^i, S^i) \in \Phi^i(\mu^j,S^j)$.
By Proposition \ref{prop:cacrt_pbr} and Definition \ref{defPhi}, we have $\Phi^i(\mu^j,S^j) \subset PBR^i(\mu^j,S^j)$.
By \eqref{eq:inclusions}, $S^j \cap \overline S\,\!^i  \subset \{G^i=R^i\}$, and thus
\begin{align*}
\mu^i(S^j \cap\{G^i >R^i\})=\mu^i( S^j \cap \overline S\,\!^i \cap \{G^i >R^i\})=0.
\end{align*}
Conversely, let $(\mu^i,S^i) \in PBR^i(\mu^j,S^j)$ such that $\mu^i(S^j \cap\{G^i >R^i\})=0$. By Proposition \ref{prop:cacrt_pbr}, $\underline S^i \subset S^i \subset \overline S\,\!^i$ and $\mu^i$ is concentrated on $(\overline S\,\!^i\setminus S^i) \cup S^j$. By \eqref{eq:inclusions}, $S^j \setminus \overline S\,\!^i  \subset S^j \cap \linebreak \{ G^i>R^i\}$, and thus
\begin{align*}
\mu^i(S^j\setminus \overline S\,\!^i) \leq \mu^i(S^j \cap\{G^i >R^i\})=0.
\end{align*}
Hence, $\mu^i$ is concentrated on $\overline S\!\,^i\setminus S^i$, so that $(\mu^i,S^i)\in \Phi^i(\mu^j,S^j)$. The result follows.
\end{proof}

\section{A Compact Topology on $\CM(\CI)$}\label{sec:compact}

Recall that $\CM(\CI)$ denotes the set of nonnegative regular measures $m:\CB( \CI)\to[0,\infty]$, i.e., such that 
\begin{align*}
m(A)=\inf \hskip 0.5mm\{ m(O): A\subset O, \text{ $O$ open}\} = \sup\hskip 0.5mm \{ m(K): K\subset A, \text{$K$ compact}\}, \quad A \in \CB(\CI).
\end{align*}
The proof of the following result follows along more or less standard lines (see, e.g., \cite[chapter 4]{Kallenberg}) and is therefore postponed to the Appendix.

\begin{proposition}\label{prop:compact_metric}
The topology $\vartheta$ on $\CM(\CI)$ defined in Theorem \ref{thm:compact_AR} is metrizable and \linebreak compact. Moreover$,$
\begin{enumerate}

\item

for every open set $O\subset \CI,$ the mapping $\CM(\CI) \to [0,\infty] : m  \mapsto m(O)$ is lsc$;$

\item

for every compact set $K \subset \CI,$ the mapping $\CM(\CI) \to [0,\infty] : m  \mapsto m(K)$ is usc$;$

\item

a sequence $(m_n)_{n \geq 0}$ converges to $m$ if and only if
\begin{itemize}

\item

for every open set $O$ such that $O \cap e(m) \neq \emptyset,$ $m_n(O) \rightarrow \infty;$

\item

letting $L_\phi(m):= \int_\CI \phi \, \mathrm dm,$ $L_\phi( m_n) \rightarrow L_\phi(m)$ for all $\phi\in \CC_c^+(\CI \setminus e(m))$.

\end{itemize}
\end{enumerate}
\end{proposition}

\begin{remark}\label{remark:vague}
Observing that $m(e)= \emptyset$ for any $m \in {\CM}_{\rm{loc}}(\CI)$, the second part of point (3) implies that the topology induced by $\vartheta$ on ${\CM}_{\rm{loc}}(\CI)$ coincides with the usual vague topology (see$,$ e.g.$,$ \cite[Chapter 4]{Kallenberg}).
\end{remark}

The next result is very important as it will allow us to prove convergence of expected payoffs under appropriate assumptions.

\begin{proposition}\label{prop:cvg_lambda}
Suppose that $(\mu_n,S_n) \to (\mu,S)$ in $\CM(\CI),$ and let $\Lambda^n,\Lambda$ denote the csfs associated with $(\mu_n,S_n)$ and $(\mu,S),$ respectively. Then
\begin{align*}
\forall t\neq \tau_S, \;  \Lambda^n_t \rightarrow \Lambda_t\; a.s.
\end{align*}
\end{proposition}
\begin{proof}
For each $t \geq 0$, we have
\begin{align}
\Lambda^n_t= \indic_{t < \tau_{S_n}}\, \mathrm e^{- \int_{\mathcal I \setminus S_n} L^y_t \, \mu_n(\mathrm d y)} \, \text{ and } \, \Lambda_t =\indic_{t < \tau_{S}} \, \mathrm e^{- \int_{\mathcal I \setminus S} L^y_t \, \mu(\mathrm d  y)} . \label{crnp}
\end{align}
Recall that by convention $\Lambda^n_\infty=\Lambda_\infty:=0$, so that $\Lambda^n,\Lambda$ define for each $\omega \in \Omega$ the survival function of a probability measure on $[0,\infty]$. Let $x\in \CI$. We distinguish two cases.

\subparagraph{Case 1: $t<\tau_S$}

Notice that there exists a set $\Omega_1$ of $\PP_x$-probability $1$ such that, for each $\omega \in \Omega_1$, the mapping $(t,y)\mapsto L^y_t(\omega)$ is continuous (see, e.g., \cite[Chapter VI, \S1, Theorem 1.7]{RevuzYor}). Define $M_t=\max_{0\leq s \leq t} X_s$ and $m_t=\min_{0\leq s \leq t}X_s$. Using the occupation time formula \cite[Chapter VI, \S1, Corollary 1.6]{RevuzYor}), for every interval $A\subset \CI $,
\begin{align*}
\int_0^t \indic_A(X_s)\sigma^2(X_s)\, \mathrm ds = \int_\CI \indic_A(y) L^y_t \,\mathrm dy \;\,\PP_x\text{-a.s.}
\end{align*}
Therefore, there exists a set $\Omega_2$ of $\PP_x$-probability $1$ such that the above equality holds for all $A$ with rational endpoints. Now, notice that $\{t <\tau_S\}=\{ [m_t,M_t] \subset \CI \setminus S\}$. Fix $\omega \in \Omega_1\cap \Omega_2$ such that $\tau_S(\omega)>0$. Then, for $t<\tau_S(\omega)$, we have, for every interval $A\subset \CI \setminus [m_t(\omega),M_t(\omega)]$ with rational endpoints,
\begin{align*}
0=\int_0^t \indic_A(X_s(\omega))\sigma^2(X_s(\omega))\, \mathrm ds = \int_\CI \indic_A(y) L^y_t(\omega) \, \mathrm dy
\end{align*}
as $X_s \in [m_t,M_t]$ for all $s\in [0,t]$. Using the continuity of $y\mapsto L^y_t(\omega)$, we deduce that $L^y_t(\omega)=0$ for all $y\in \CI\setminus [m_t(\omega),M_t(\omega)]$, and therefore that $y\mapsto L^y_t(\omega)$ is a continuous function with compact support $K_t(\omega) \subset \CI\setminus S$. Now, for each $n\geq 0$, let $m_n$ denote the measure in $\CM(\CI)$ associated to $(\mu_n,S_n)$. Because the restriction of $m_n$ to $\CI \setminus S$ converges vaguely to $\mu$ by Theorem \ref{prop:compact_metric}(3), $K_t(\omega) \cap S_n= \emptyset$ for any sufficiently large $n$, and thus the restrictions of $\mu_n$ and $m_n$ to $K_t(\omega)$ coincide. As a result, for each $\omega \in \Omega_1\cap \Omega_2 \cap\{t <\tau_S\}$, $\int_{\CI \setminus S^n} L^y_t(\omega) \, \mu_n(\mathrm d y) = \int_{K_t(\omega)} L^y_t(\omega) \, \mu_n(\mathrm d y) \to \int_{K_t( \omega)} L^y_t(\omega) \, \mu(\mathrm d y) = \int_{\CI\setminus S} L^y_t(\omega) \, \mu(\mathrm dy)$ and thus $\Lambda^n_t (\omega) \to \Lambda_t( \omega) $ by \eqref{crnp}.

\subparagraph{Case 2: $t>\tau_S$}

We first claim that, for each $z\in \CI$, letting $\tau_z$ denote the hitting time of $z$, we have, for every event $A\in \CF_{\tau_z}$
\begin{align*}
\PP_x \hskip 0.3mm[ L^z_t >0, t>\tau_z, A]=\PP_x\hskip 0.3mm[t>\tau_z, A].
\end{align*}
First, because $t\mapsto L_t^z$ is a strongly additive functional of the diffusion process $X$ \cite[Part I, Chapter II, Section 2, \S13, and Section 4, \S21]{BorodinSalminen}, we have, with $\PP_x$-probability $1$ on $\{\tau_z<t\}$,
\begin{align*}
L^z_{t}(\omega)= L^z_{\tau_z(\omega)}(\omega)+ L^z_{t-\tau_z(\omega)}(\theta_{\tau_z(\omega)}(\omega))=L^z_{t-\tau_z (\omega)}(\theta_ {\tau_z (\omega)} (\omega)).
\end{align*}
Then, denoting by $\tilde \Omega$ a copy of the canonical space endowed with the probabilities $\tilde \PP_y=\PP_y$ for $y\in \CI$,
\begin{align*}
\PP_x\hskip 0.3mm[ L^z_{t} >0,t>\tau_z, A]= \EE_x\hskip 0.3mm[\tilde\PP_z\hskip 0.3mm[ L^z_{t-\tau_z(\omega)}(\tilde \omega)>0]\indic_{t>\tau_z (\omega)} \indic_A(\omega)]=\PP_x\hskip 0.3mm[ t>\tau_z, A],
\end{align*}
where the first equality follows from the Markov property, and the second equality follows from the fact that $\PP_y\hskip 0.3mm[L_t^y>0]=1$ for all $y\in \CI$ and $t>0$ (see, e.g., \cite[Chapter VI, \S2, Proof of Proposition 2.5]{RevuzYor}). The claim follows. Now, if $t>\tau_S$, then it must be that either $X_{\tau_S}=x$ if $x\in S$ or $X_{\tau_S} \in\{a,b\}\subset S$ if $x \notin S$, where $(a,b)$ denotes the largest open interval containing $x$ in $\CI \setminus S$.
We claim that
\begin{align*}
\PP_x \hskip 0.3mm \big[L^{X_{\tau_S}}_t >0, t>\tau_S\big]=\PP_x \hskip 0.3mm[t>\tau_S].
\end{align*}
If $x\in S$, $X_{\tau_S}=x$ and $\tau_S=0$, so both sides are equal by the same reasoning as above. If $x\notin S$, we have, again by the same reasoning,
\begin{align*}
\PP_x \hskip 0.3mm\big[L^{X_{\tau_S}}_t >0, t>\tau_S\big]&= \PP_x\hskip 0.3mm[L^a_t >0, t>\tau_a, X_{\tau_S}=a] +\PP_x\hskip 0.3mm[L^b_t >0, t>\tau_b, X_{\tau_S}=b]
\\
&=\PP_x\hskip 0.3mm[t>\tau_a, X_{\tau_S}=a] +\PP_x\hskip 0.3mm[t>\tau_b, X_{\tau_S}=b]
\\
&= \PP_x\hskip 0.3mm[t>\tau_S].
\end{align*}
The claim follows. Let $\Omega_3$ be a set of $\PP_x$-probability $1$ such that $L_t^{ X_{\tau_S}(\omega)}(\omega)>0$ for all $\omega \in \Omega _3 \cap \{t>\tau_S\}$. For $\omega \in \Omega_1\cap \Omega_2\cap \Omega_3 \cap\{t >\tau_S\}$, the mapping $y \mapsto L_t^y(\omega)$ is continuous, vanishes outside of $[m_t(\omega),M_t(\omega)]$, and $L_t^{ X_{\tau_S}(\omega)}(\omega)>0$. By continuity, it must be that $m_t(\omega)<X_{\tau_S} (\omega) <M_t(\omega)$, and there exist $\varepsilon(\omega),\eta(\omega)>0$ such that $L^y_t(\omega) \geq \eta(\omega)$ for all $y\in (X_{\tau_S}(\omega)-\varepsilon(\omega),X_{\tau_S}(\omega)+\varepsilon(\omega)) \subset [m_t(\omega),M_t(\omega)]$. Because $(\mu_n,S_n) \to (\mu,S)$ and $X_{\tau_S}(\omega)\in S$, it must be that $m_n((X_{\tau_S}(\omega)-\varepsilon(\omega),X_{\tau_S}(\omega)+\varepsilon(\omega))) \to \infty$, where $m_n$ denotes the measure in $\CM(\CI)$ associated to $(\mu_n,S_n)$. Notice that $\indic_{t<\tau_{S_n}(\omega)} \neq 0$ if and only if $S_n\cap [m_t(\omega),M_t (\omega)] = \emptyset$, which implies $(X_{\tau_S}(\omega)-\varepsilon(\omega),X_{\tau_S}(\omega)+\varepsilon(\omega)) \subset \CI\setminus S_n$ and $m_n((X_{\tau_S}(\omega)-\varepsilon(\omega),X_{\tau_S}(\omega)+\varepsilon(\omega)))=\mu_n((X_ {\tau_S}(\omega)- \varepsilon (\omega),X_{\tau_S}(\omega)+\varepsilon(\omega)))$. We deduce that
\begin{align*}
0 \leq \Lambda^n_t(\omega) =\indic_{t<\tau_{S_n}(\omega)} \, \mathrm e^{- \int_{\CI \setminus S_n} L^y_t(\omega) \, \mu_n(\mathrm d y)} \leq \indic_{t<\tau_{S_n}( \omega)} \,  \mathrm e^{-  \eta(\omega) \mu_n((X_ {\tau_S}(\omega)- \varepsilon (\omega),X_{\tau_S}(\omega)+\varepsilon(\omega)))}\rightarrow 0,
\end{align*}
which concludes the proof because $\Lambda_t(\omega) = 0$ by \eqref{crnp} as $t>\tau_S(\omega)$. Hence the result. \end{proof}

\section{Closedness of the Graph of $\Phi$}\label{sec:closed_graph}

We will use the following classical result.

\begin{theorem}[{\cite[Theorems 5.1 and 5.4]{Billingsley}}]\label{thm:bill}
Let $E$ be a Polish space and $(\nu_n)_{n \geq 0}$ a sequence of probability measures on $E$ that converges weakly to $\nu$. Suppose that $f: E\rightarrow \RR$ is a measurable function such that $\nu(D)=0,$ where $D$ is the set of discontinuity points of $f,$ and that the variables of law $\nu_n\circ f^{-1}$ are uniformly integrable$,$ i.e.$,$
\begin{align*}
\lim_{M \to \infty}\sup_{n \geq 0} \int_E |f(x)|\indic_{|f(x)|\geq M} \, \nu_n(\mathrm d x) =0.
\end{align*}
Then
\begin{align*}
\int_E f \, \mathrm d\nu_n \to  \int_E f \, \mathrm d\nu.
\end{align*}
\end{theorem}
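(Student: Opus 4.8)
The plan is to derive Theorem~\ref{thm:bill} from Skorokhod's representation theorem together with Vitali's convergence theorem, which is the standard route for ``convergence of integrals under weak convergence plus uniform integrability'' statements of this kind.

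First I would invoke Skorokhod's representation theorem: since $E$ is Polish and $\nu_n\to\nu$ weakly, there is a probability space $(\tilde\Omega,\tilde{\CF},\tilde\PP)$ carrying $E$-valued random variables $Y,Y_1,Y_2,\dots$ with $\tilde\PP\circ Y^{-1}=\nu$, $\tilde\PP\circ Y_n^{-1}=\nu_n$ for every $n$, and $Y_n\to Y$ $\tilde\PP$-almost surely. The discontinuity set $D$ of $f$ is Borel --- it equals $\{\omega_f>0\}$, an $F_\sigma$ set, where $\omega_f$ is the (upper semicontinuous) oscillation of $f$ --- so the hypothesis $\nu(D)=0$ means $\tilde\PP(Y\in D)=0$. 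Hence, $\tilde\PP$-almost surely $f$ is continuous at the point $Y$ while $Y_n\to Y$, and therefore $f(Y_n)\to f(Y)$ $\tilde\PP$-almost surely.

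Next, I would note that the law of $f(Y_n)$ under $\tilde\PP$ is exactly $\nu_n\circ f^{-1}$, so the uniform integrability hypothesis says precisely that $(f(Y_n))_{n}$ is uniformly integrable on $(\tilde\Omega,\tilde\PP)$. Almost sure convergence together with uniform integrability yields convergence in $L^1(\tilde\PP)$ (Vitali), and in particular $\sup_n\tilde\EE[|f(Y_n)|]<\infty$, so Fatou's lemma gives $\tilde\EE[|f(Y)|]<\infty$, i.e.\ $f\in L^1(\nu)$. Translating back through the laws, $\int_E f\,\mathrm d\nu_n=\tilde\EE[f(Y_n)]\to\tilde\EE[f(Y)]=\int_E f\,\mathrm d\nu$, which is the assertion.

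I expect the only delicate points to be bookkeeping ones: verifying that $D$ is measurable so that $\nu(D)=0$ is meaningful (via the oscillation function), and securing $\nu$-integrability of $f$ (via Fatou, once uniform integrability supplies a uniform $L^1$ bound). A representation-free alternative would be to truncate, setting $f_M:=(f\wedge M)\vee(-M)$: this $f_M$ is bounded with discontinuity set contained in $D$, so the portmanteau theorem gives $\int f_M\,\mathrm d\nu_n\to\int f_M\,\mathrm d\nu$, after which one controls $\int|f-f_M|\,\mathrm d\nu_n$ uniformly in $n$ by uniform integrability (choosing $M$ with $\nu(\{|f|=M\})=0$ to avoid boundary effects) and likewise for $\nu$; but the Skorokhod argument above is cleaner and is the one I would write up.
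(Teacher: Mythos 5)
Your proof is correct. Note, however, that the paper does not actually prove this statement: it is presented as a classical result and attributed to two theorems in Billingsley, namely the mapping theorem (which gives weak convergence of the pushforwards $\nu_n\circ f^{-1}\Rightarrow\nu\circ f^{-1}$ on $\RR$ under the hypothesis $\nu(D)=0$) combined with the theorem asserting that weak convergence plus uniform integrability implies convergence of first moments. Your primary argument takes a different but equally standard route, replacing that pair of citations by Skorokhod's representation theorem on the Polish space $E$ followed by Vitali's convergence theorem; the two delicate points you flag (Borel measurability of $D$ via the oscillation function, and $\nu$-integrability of $f$ via Fatou once uniform integrability supplies a uniform $L^1$ bound) are exactly the right ones, and you handle both correctly. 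The truncation alternative you sketch at the end is essentially the proof of Billingsley's uniform-integrability theorem unwound by hand, so it is the closer match to the paper's cited route; one small remark is that the clamping $f_M=(f\wedge M)\vee(-M)$ is a continuous function of $f$, so its discontinuity set is automatically contained in $D$ and no care about $\nu(\{|f|=M\})=0$ is actually needed there (that precaution would matter only for a sharp cutoff such as $f\indic_{|f|<M}$). Either write-up is acceptable; the Skorokhod version you chose is self-contained and arguably cleaner.
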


\medskip

The following result then holds.

\begin{proposition}\label{prop:closed_graph}
The correspondence $\Phi$ defined by \eqref{eq:def_phi_product} has a closed graph.
\end{proposition}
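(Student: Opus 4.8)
The plan is to use that $(\CM(\CI),\vartheta)$ is compact and metrizable (Proposition~\ref{prop:compact_metric}), so that closedness of the graph of $\Phi$ is equivalent to its sequential closedness and, by the product structure~\eqref{eq:def_phi_product}, to the following ``one-player'' statement: whenever $(\mu^j_n,S^j_n)\to(\mu^j,S^j)$, $(\mu^i_n,S^i_n)\in\Phi^i(\mu^j_n,S^j_n)$, and $(\mu^i_n,S^i_n)\to(\mu^i,S^i)$ in $\vartheta$, one has $(\mu^i,S^i)\in\Phi^i(\mu^j,S^j)$. I will write $\bar J^i_n$ and $\overline S^i_n=\{\bar J^i_n=R^i\}$ for the best-reply value function against $(\mu^j_n,S^j_n)$ and its stopping region (and $\bar J^i,\overline S^i$ for those against $(\mu^j,S^j)$), and $m^i_n,m^i$ for the measures on $\CI$ associated with $(\mu^i_n,S^i_n),(\mu^i,S^i)$ through~\eqref{eq:muS_to_m}. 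Since $(\mu^i,S^i)\in\CM(\CI)$ automatically, by~\eqref{defPhi} it remains only to check that $\underline S^i\subset S^i\subset\overline S^i$ and that $\mu^i$ is concentrated on $\overline S^i\setminus S^i$.

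The key --- and most delicate --- ingredient is the \emph{convergence of payoffs and of best-reply value functions}. By Proposition~\ref{prop:cvg_lambda}, the conditional survival (hence cumulative) functions of both players converge $\PP_x$-a.s.\ off their respective hitting times; combining this with the class-(D) uniform integrability from A1, the continuity at infinity from A2 (which makes $t\mapsto\mathrm e^{-rt}f(X_t)$ continuous on the compactification $[0,\infty]$), the payoff representation~\eqref{forbpr}, and a careful application of Theorem~\ref{thm:bill} handling the atoms that the players' laws carry at their hitting times, I would show that $J^i$ is jointly continuous in the initial condition and in the pair of strategies for $\vartheta$. Two consequences follow. First, for a fixed $\tau\in\CT$, $J^i(x,\tau,(\mu^j_n,S^j_n))\to J^i(x,\tau,(\mu^j,S^j))$; taking $\tau=\tau_{\overline S^i}$ optimal (Proposition~\ref{prop:existence_pbr}) gives $\liminf_n\bar J^i_n(x)\ge\bar J^i(x)$. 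Second, since $(\mu^i_n,S^i_n)\in PBR^i(\mu^j_n,S^j_n)$ by Lemma~\ref{lem:caractPhi}, $\bar J^i_n(x)=J^i(x,(\mu^i_n,S^i_n),(\mu^j_n,S^j_n))\to J^i(x,(\mu^i,S^i),(\mu^j,S^j))\le\bar J^i(x)$. Combining the two, $\bar J^i_n\to\bar J^i$ pointwise and $J^i(\cdot,(\mu^i,S^i),(\mu^j,S^j))=\bar J^i$; and, running the same argument along varying starting points, one upgrades this to \emph{continuous convergence}, $\bar J^i_n(y_n)\to\bar J^i(y)$ whenever $y_n\to y$ in $\CI$ --- using, in addition, that the $\bar J^i_n$ are lower semicontinuous on $\CI$ and equicontinuous on compact subsets, as follows from the regularity established in the proof of Proposition~\ref{prop:existence_pbr} together with A1. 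I expect this continuous-convergence statement to be the main obstacle of the proof.

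Granting it, the identity $J^i(\cdot,(\mu^i,S^i),(\mu^j,S^j))=\bar J^i$ means precisely that $(\mu^i,S^i)\in PBR^i(\mu^j,S^j)$, and Proposition~\ref{prop:cacrt_pbr} then already gives $\underline S^i\subset S^i\subset\overline S^i$ as well as the fact that $\mu^i$ is a locally finite measure on $\CI\setminus S^i$ concentrated on $(\overline S^i\setminus S^i)\cup S^j$. It remains to sharpen the last point to $\mu^i(\CI\setminus\overline S^i)=0$, which, since $S^i\subset\overline S^i$, is the same as saying that $\mu^i$ is concentrated on $\overline S^i\setminus S^i$.

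For this I would first note that continuous convergence yields $\limsup_n\overline S^i_n\subset\overline S^i$: if $y_n\in\overline S^i_n$ and $y_n\to y\in\CI$, then $R^i(y_n)=\bar J^i_n(y_n)\to\bar J^i(y)$ while $R^i(y_n)\to R^i(y)$, so $\bar J^i(y)=R^i(y)$. Next I would write the open set $\CI\setminus\overline S^i$ as a countable union $\bigcup_k(a_k,b_k)$ of open intervals with each $\overline{(a_k,b_k)}$ compact and contained in $\CI\setminus\overline S^i$. Fixing $k$: for $n$ large $\overline{(a_k,b_k)}\cap\overline S^i_n=\emptyset$, since otherwise a subsequential limit of points of $\overline{(a_k,b_k)}\cap\overline S^i_n$ would lie in $\overline S^i$ by the previous sentence; as $\mu^i_n$ is concentrated on $\overline S^i_n\setminus S^i_n$ and $S^i_n\subset\overline S^i_n$, this forces $(a_k,b_k)\cap S^i_n=\emptyset$ and $\mu^i_n((a_k,b_k))=0$, hence $m^i_n((a_k,b_k))=0$. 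By lower semicontinuity of $m\mapsto m((a_k,b_k))$ (Proposition~\ref{prop:compact_metric}(1)) and because $(a_k,b_k)\cap S^i=\emptyset$ (so that $m^i((a_k,b_k))=\mu^i((a_k,b_k))$), I obtain $\mu^i((a_k,b_k))\le\liminf_n m^i_n((a_k,b_k))=0$. Summing over $k$ gives $\mu^i(\CI\setminus\overline S^i)=0$, and therefore $(\mu^i,S^i)\in\Phi^i(\mu^j,S^j)$ by~\eqref{defPhi}. This establishes the one-player statement and, with it, the proposition.
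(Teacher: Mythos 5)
Your overall architecture (reduce to sequential closedness of $\Phi^i$, identify the limit as a pbr, then localize $\mu^i$ on $\overline S^i\setminus S^i$ by a covering argument and lower semicontinuity of $m\mapsto m(\CO)$) is sensible, and the final covering step is a legitimate variant of the paper's use of Lemma \ref{lem:caractPhi}. However, the proposal rests on two assertions that are false as stated, and the missing step is precisely the heart of the paper's proof. First, $J^i$ is \emph{not} jointly continuous in the pair of strategies for $\vartheta$: the integrand $f(t,t')=\mathrm e^{-rt}R^i(X_t)\indic_{t\le t'}+\mathrm e^{-rt'}G^i(X_{t'})\indic_{t'<t}$ is discontinuous on the diagonal wherever $R^i<G^i$, so Theorem \ref{thm:bill} applies only if the limiting product law $\nu^i\otimes\nu^j$ does not charge that set. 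If, say, $S^i_n=\{x_0+\tfrac1n\}$ and $S^j_n=\{x_0-\tfrac1n\}$ with $R^i(x_0)<G^i(x_0)$ and the process starts below $x_0$, the payoffs along the sequence converge to the ``$j$ stops first'' value $G^i$, while the limit strategies stop simultaneously and give $R^i$; so the convergence $J^i(x,\Gamma^{i,n},\Gamma^{j,n})\to J^i(x,\Gamma^i,\Gamma^j)$ that you invoke in step (2) is exactly what can fail. Ruling this out requires first proving that the \emph{limit} satisfies $S^i\cap S^j\cap\{R^i<G^i\}=\emptyset$ and $\mu^i(S^j\cap\{R^i<G^i\})=0$ — which cannot be obtained from payoff convergence (that would be circular) and is instead obtained in the paper by a separate contradiction argument: at any $y\in S^i_n\cup\mathrm{supp}\,\mu^i_n$ one has $R^i(y)\ge\EE_y[\int_{[0,\infty)}\mathrm e^{-rs}G^i(X_s)\,\dd\Gamma^{j,n}_s]$, and a local estimate via the Markov property shows the right-hand side tends to $G^i(x)>R^i(x)$ along $x_n\to x\in S^j\cap\{R^i<G^i\}$. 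This argument is entirely absent from your proposal.

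Second, the ``continuous convergence'' $\bar J^i_n(y_n)\to\bar J^i(y)$ and the claimed equicontinuity of $(\bar J^i_n)$ on compacts are unsupported: each $\bar J^i_n$ is continuous only on $\CI\setminus S^j_n$ and may jump on $S^j_n$, and no uniform modulus is available; moreover the upper bound $\limsup_n\bar J^i_n\le\bar J^i$ can only be extracted from payoff convergence along the candidate sequence, which brings you back to the first gap. Note also that your appeal to $J^i(x,\tau,(\mu^j_n,S^j_n))\to J^i(x,\tau,(\mu^j,S^j))$ for a fixed pure $\tau$ needs the restriction (present in the paper) that $\tau\ne\tau_{S^j}$ on $\{R^i(X_\tau)<G^i(X_\tau)\}$ a.s.; it happens to hold for $\tau=\tau_{\overline S^i}$ because $\overline S^i\cap S^j\subset\{R^i=G^i\}$, but this should be checked rather than subsumed under a false blanket continuity claim. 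Once the two structural identities above are established (as in the paper), the remainder of your argument can be repaired, but as written the proof does not go through.
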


\begin{proof}
Because $\Phi$ is defined as a cartesian product, it is sufficient to prove that, for each $i=1,2$, $\Phi^i$ has a closed graph in $\CM(\CI)\times \CM(\CI )$. Because $\CM(\CI)$ is metrizable, it is sufficient to prove that the graph of $\Phi^i$ is sequentially closed. Let us therefore consider a sequence $((\mu^i_n, S^i_n ), (\mu^j_n,S^j_n))_{n \geq 0}$ in $\CM(\CI)\times \CM(\CI)$ such that, for each $n \geq 0$, $( \mu^i_n,S^i_n) \in \Phi^i(\mu^j_n,S^j_n)$. Assume further that this sequence converges to a limit $((\mu^1,S^1),(\mu^2,S^2))$. We need to prove that $(\mu^i,S^i) \in \Phi^i(\mu^j,S^j)$ or, equivalently, by Lemma \ref{lem:caractPhi}, that $(\mu^i,S^i)$ is a pbr to $(\mu^j,S^j)$ such that $\mu^i(S^j\cap \{G^i> R^i\})=0$.

By Proposition \ref{prop:cvg_lambda}, for each $t \neq \tau_{S^i}$, $\Lambda^{i}_{n,t} \to \Lambda^i_t$ a.s., where the processes $\Lambda^i$ and $\Lambda^{i}_n$ are the csfs associated to $(\mu^i,S^i)$ and $(\mu^i_n,S^i_n)$, respectively. Recall also that
\begin{align}
J^i(x,\Gamma^i,\Gamma^j) &=  \mathbb E _x \! \left[ \int_{[0,\infty)} \mathrm e^{-r t} R^i(X_t)\Lambda^j_{t-} \, \mathrm d\Gamma^i_t+ \int_{[0,\infty)} \mathrm e^{-r t}G^i(X_t)\Lambda^i_t \, \mathrm d\Gamma^j_t\right] \hskip -1mm, \label{eq:forbpr}
\\
J^i(x,\Gamma^{i}_n,\Gamma^{j}_n) &=  \mathbb E _x \! \left[ \int_{[0,\infty)} \mathrm e^{-r t} R^i(X_t)\Lambda^{j}_{n,t-} \, \mathrm d\Gamma^{i}_{n,t} + \int_{[0,\infty)} \mathrm e^{-r t}G^i(X_t)\Lambda^{i}_{n,t} \, \mathrm d\Gamma^{j}_{n,t}\right] \hskip -1mm. \label{eq:forbpr2}
\end{align}
The remainder of the proof consists of three steps.

\subparagraph{Step 1}

We first prove that $S^i\cap S^j\cap \{R^i<G^i\}=\emptyset$. Hence suppose, by way of contradiction, that $x\in S^i\cap S^j\cap \{R^i<G^i\}$. For each $n \geq 0$, because $(\mu^i_n,S^i_n) \in \Phi^i(\mu^j_n,S^j_n)$, we have $\bar J^i(y,(\mu^j_n,S^j_n))=R^i(y)$ for all $y\in S^i_n\cup \mathrm{ supp}\,\mu^i_n$, and therefore stopping immediately gives a weakly larger payoff than never stopping, i.e.,
\begin{align}\label{eq:ineqRG}
R^i(y) \geq \EE_y\! \left[\int_{[0,\infty)} \mathrm e^{-rs}G^i(X_s)\, \mathrm d\Gamma^{j}_{n,s}\right]\!=:\kappa_n(y).
\end{align}
Now, because $x \in S^i$ and $m^i_n: = (\mu_n,S_n) \to (\mu^i,S^i)$, for each $\varepsilon >0$ we have $m_n((x-\varepsilon,x+\varepsilon)) \to \infty$, and there exists $N(\varepsilon) >0$ such that $m_n((x-\varepsilon,x+\varepsilon))>0$ and thus $(x-\varepsilon,x+\varepsilon) \cap \mathrm {supp}\, m_n \not = \emptyset$ for all $n \geq N(\varepsilon)$. Hence, because $\mathrm {supp}\, m_n = S^i_n \cup \mathrm {supp}\, \mu_n$ for all $n \geq 0$, there exists a sequence $x_n \to x$ such that for each $n \geq 0$, $x_n  \in S^i_n\cup \mathrm{ supp} \, \mu^i_n$ and thus satisfies \eqref{eq:ineqRG}. Fix some $\varepsilon>0$ such that $G^i > R^i(x)+\varepsilon$ on $[x-\varepsilon,x+\varepsilon]$. By the above reasoning, we can assume that $x_n \in (x-\varepsilon,x+\varepsilon)$ for $n$ large enough.

Next, observe that the mapping $[0,\infty] \to \RR: t \to \mathrm e^{-rt}G^i(X_t)$, which is a.s.\! equal to 0 at $\infty$ by Assumption A2, is a.s.\! continuous and bounded, and that the sequence of probabilities on $[0,\infty]$ with csfs $(\Lambda^{j}_{n})_{n \geq 0}$ converges weakly to the probability $\nu$ with csf $\Lambda^j$ by Proposition \ref{prop:cvg_lambda}. Therefore, we can apply Theorem \ref{thm:bill} with $E=[0,\infty]$ to obtain
\begin{align*}
\int_{[0,\infty)} \mathrm e^{-rs}G^i(X_s)\, \mathrm d\Gamma^{j}_{n,s} \rightarrow \int_{[0,\infty)} \mathrm e^{-rs}G^i(X_s)\, \mathrm d\Gamma^{j}_s \; \, \mathrm{a.s.}
\end{align*}
Using Assumption A1 and $x \in S^j$, we conclude by dominated convergence that
\begin{align*}
\kappa_n(x)=\EE_x\! \left[\int_{[0,\infty)} \mathrm e^{-rs}G^i(X_s)\, \mathrm d\Gamma^{j}_{n,s}\right] \to \EE_x \! \left[\int_{[0,\infty)} \mathrm e^{-rs}G^i(X_s)\, \mathrm d \Gamma^{j}_s\right]=G^i(x).
\end{align*}
It follows that $\kappa_n(x)\geq R^i(x)+\varepsilon$ for $n$ large enough. Let $\tau$ denote the exit time from $(x-\varepsilon,x+\varepsilon)$ and $\tau_x$ the hitting time of $x$. Using the Markov property and \eqref{mark} as in the proof of Proposition \ref{geneprop}(c), we deduce that
\begin{align}
&\EE_{x_n} \!\left[\int_{[0,\infty)} \mathrm e^{-rs}G^i(X_s)\, \mathrm d\Gamma^{j}_{n,s}\right] \notag
\\
& = \EE_{x_n} \! \left[\int_{[0,\tau_x\wedge \tau)} \mathrm e^{-rs}G^i(X_s)\, \mathrm d\Gamma^{j}_{n,s} +\indic_{\tau_x<\tau}\,\Lambda^{j}_{n,\tau_x-} \,\mathrm e^{-r \tau_x}\kappa_n(x)  +\indic_{\tau <\tau_x}\int_{[\tau,\infty)} \mathrm e^{-rs}G^i(X_s)\, \mathrm d\Gamma^{j}_{n,s} \right] \notag \allowdisplaybreaks
\\
&\geq [R^i(x)+\varepsilon]\,\EE_{x_n} [\indic_{\tau_x<\tau}\,\mathrm e^{-r \tau_x} ] +\EE_{x_n}\!\left[\indic_{\tau <\tau_x}\int_{[\tau, \infty)} \mathrm e^{-rs}G^i(X_s)\, \mathrm d\Gamma^{j}_{n,s} \right] \hskip -1mm. \label{armoire}
\end{align}
Again, as in the proof of Proposition \ref{geneprop}(c), there exists a constant $C'>0$ such that
\begin{align}
\left|\EE_{x_n}\! \left[\indic_{\tau <\tau_x}\int_{[\tau,\infty)} \mathrm e^{-rs}G^i(X_s)\, \mathrm d\Gamma^{j}_{n,s} \right]\right| \leq C' \PP_{x_n}[\tau <\tau_x]. \label{lit}
\end{align}
We deduce from (\ref{eq:ineqRG}) applied to $x_n$ and from \eqref{armoire}--(\ref{lit}) that
\begin{align*}
R^i(x_n)\geq \EE_{x_n}\!\left[\int_{[0,\infty)} \mathrm e^{-rs}G^i(X_s)\, \mathrm d\Gamma^{j}_{n,s}\right] \geq [R^i(x)+\varepsilon]\, \EE_{x_n} \! \left[\indic_{\tau_x<\tau}\mathrm e^{-r \tau_x} \right]-C'\PP_{x_n}[\tau <\tau_x].
\end{align*}
The right-hand side of this inequality converges to $R^i(x)+\varepsilon$ as $n \to \infty$, whereas the left-hand side converges to $R^i(x)$, a contradiction. We conclude that $S^i\cap S^j\cap \{R^i<G^i\}=\emptyset$.

\subparagraph{Step 2}

We now prove that $\mu^i(S^j\cap\{R^i<G^i\})=0$. Suppose, by way of contradiction, that $\mu^i(S^j\cap\{R^i<G^i\})>0$. Then there exists $x\in S^j\cap\{ R^i<G^i\}$ such that every neighborhood $O$ of $x$ is such that $\mu^i(O)>0$. Because $x\notin S^i$ by Step 1, it must be that $[x- \varepsilon,x+\varepsilon] \cap S^i =\emptyset$ for any sufficiently small $\varepsilon>0$, and thus $\mu^i([x-\varepsilon,x+\varepsilon])<\infty$.
Because $(\mu^i_n,S^i_n) = m_n^i \to m^i=(\mu^i,S^i)$, it follows that
\begin{align*}
\limsup_{n \to \infty} m^i_n([x-\varepsilon,x+\varepsilon]) \leq m^i([x-\varepsilon,x+\varepsilon])=\mu^i([x-\varepsilon,x+\varepsilon])< \infty,
\end{align*}
so that $S^i_n \cap [x-\varepsilon,x+\varepsilon] = \emptyset$ for $n$ large enough. We deduce that
\begin{align*}
\liminf_{n \to \infty} \mu^i_n((x-\varepsilon,x+\varepsilon))&=\liminf_{n \to \infty} m^i_n((x-\varepsilon,x+\varepsilon))
\\
&\geq m^i((x-\varepsilon ,x+\varepsilon))
\\
&=\mu^i((x-\varepsilon,x+\varepsilon))
\\
&>0.
\end{align*}
As this is true for any sufficiently small $\varepsilon >0$, there exists a sequence $x_n \to x$ such that for all $n$, $x_n \in \mathrm {supp} \, \mu^i_n$. Because $(\mu^i_n,S^i_n) \in \Phi^i(\mu^j_n,S^j_n)$, we have $R^i(x_n)=\bar J^i(x_n,(\mu^j_n,S^j_n))$. Thus, for any large enough $n$, inequality \eqref{eq:ineqRG} holds with $y=x_n$, which leads to a contradiction as in Step 2. We conclude that $\mu^i(S^j\cap\{R^i<G^i\})=0$.

\subparagraph{Step 3}

We finally prove that $(\mu^i,S^i)$ is a pbr to $(\mu^j,S^j)$. By Assumptions A1--A2, the random function $f:[0,\infty]^2 \rightarrow \RR$ defined by
\begin{align*}
f(t,t') := \indic_{t\leq t'} \, \mathrm e^{-rt}R^1(X_t)+ \indic_{t'<t} \,\mathrm e^{-rt'}G^1(X_{t'}),
\end{align*}
with $f(\infty,\infty):=0$, is a.s.\! bounded, and the set of discontinuities of $f$ is the set
\begin{align*}
\{ (t,t)\in [0,\infty)^2 : R^1(X_t)<G^1(X_t)\}.
\end{align*}
Letting $\nu^i_n$ and $\nu^i$ denote the probabilities on $[0,\infty]$ with csfs $\Lambda^{i}_{n}$ and $\Lambda^i$ respectively, and similarly for player $j$, we have
\begin{align}\label{eq:forbpr3}
\int_{[0,\infty]^2} f(t,t')\, \nu^i\otimes \nu^j(\mathrm d t,\mathrm d t')=\int_{[0,\infty)} \mathrm  e^{-r t} R^i(X_t)\Lambda^j_{t-} \, \mathrm d\Gamma^i_t+ \int_{[0,\infty)} \mathrm e^{-r t}G^i(X_t)\Lambda^i_t \, \mathrm d\Gamma^j_t.
\end{align}
Because $S^i\cap S^j\cap \{R^i<G^i\}=\emptyset$, the probability $\nu^1\otimes \nu^2$ does not charge the set of discontinuities of $f$; indeed, the conditional probability that $t'=t$ given $t$ is 0 unless $t=\tau_{S^j}$, the probability that $t=\tau_{S^j}$ is 0 unless $\tau_{S^i}=\tau_{S^j}$, and it cannot be that $\tau_{S^i}=\tau_{S^j}$ and $R^i(X_{\tau_{S^i}})<G^i(X_{\tau_{S^i}})$. Proposition \ref{prop:cvg_lambda} thus implies that the sequence $(\nu^i_n\otimes \nu^j_n)_{n \geq 0}$ converges weakly to $\nu^i\otimes \nu^j$. We can thus apply Theorem \ref{thm:bill} to obtain
\begin{align*}
\int_{[0,\infty]^2} f(t,t')\, \nu^{i}_n\otimes \nu^{j}_n(\mathrm dt,\mathrm dt') \rightarrow \int_{[0,\infty]^2} f(t,t')\, \nu^i\otimes \nu^j
(\mathrm dt,\mathrm dt')\;\, \text{a.s.}
\end{align*}
By Assumption A1, the random variables $(f(t,t'))_{(t,t') \in [0,\infty]^2}$ are uniformly integrable. As a result, the sequence of random variables $(\int _{[0,\infty]^2} f(t,t')\, \nu^{i}_n\otimes \nu^{j}_n(\mathrm dt,\mathrm dt'))_{n\geq 0}$ is also uniformly integrable. Therefore, the above convergence also holds in expectation, which leads by \eqref{eq:forbpr}--\eqref{eq:forbpr2} and \eqref{eq:forbpr3} to
\begin{align*}
J^i(x,\Gamma^{i}_{n},\Gamma^{j}_{n}) \rightarrow J^i(x,\Gamma^{i},\Gamma^{j}).
\end{align*}
Let now $\tau \in \mathcal T$ be an arbitrary stopping time for player $i$ such that $\tau\neq \tau_{S^j}$ on $R^i(X_\tau)<G^i(X_\tau)$ a.s. Replacing $\Lambda^{i}_{n}$ and $\Lambda^i$ in the preceding proof by $\Lambda_t=\indic_{t<\tau}$, we obtain
\begin{align*}
J^i(x,\tau,\Gamma^{j}_{n}) \to J^i(x,\tau,\Gamma^{j}).
\end{align*}
Because, for each $n \geq 0$, $(\mu^i_n,S^i_n) \in PBR^i(\mu^j_n,S^j_n)$, we have, for each $x\in \CI$,
\begin{align*}
J^i(x,\tau,\Gamma^j_n) \leq J^i(x,\Gamma^{i}_{n},\Gamma^{j}_{n}).
\end{align*}
Taking the limit on both sides, it follows that, for each $\tau \in \mathcal T$ such that $\tau\neq \tau_{S^j}$ a.s.\! and for each $x\in \CI$,
\begin{align}
J^i(x,\tau,\Gamma^{j}) \leq J^i(x,\Gamma^{i},\Gamma^{j}). \label{this is the end}
\end{align}
To conclude, notice that a stopping time $\tau$ such that $\tau=\tau_{S^j}$ and $R^i(X_\tau)<G^i(X_\tau)$ with positive probability cannot be optimal, as player $1$ would prefer, conditionally on this event, to wait indefinitely so as to let player $j$ stop first. Hence, \eqref{this is the end} holds without restriction for all $\tau \in \mathcal T$. We conclude that $(\mu^i,S^i) \in PBR^i(\mu^j,S^j)$ and therefore that the graph of $\Phi^i$ is closed. Hence the result. \end{proof}

{Proposition \ref{prop:closed_graph} has an important corollary, which pertains to pure pbrs as defined by \eqref{purepbrcorr}. As explained in the introduction, the optimal stopping problem of a decision maker $i$ whose reward function may jump upwards according to a Markovian randomized stopping time $m^j$ is of independent interest. According to Proposition \ref{prop:existence_pbr}, this problem admits an optimal stopping time and, according to Lemma \ref{lem:sigma_i}, the corresponding optimal stopping regions forms a lattice. We now study how this set varies with $m^j$.

To this end, we may identify the family $\CC$ of all closed subsets $S \subset \CI$ with a subset of $\CM(\CI)$ by identifying $S$ with the measure $m_S$ associated to the pair $(0,S)$. Recall that a sequence $(S_n)_{n \geq 0}$ of closed sets converges to $S$ in the Painlev\'{e}--Kuratowski sense ($S_n \mathop{\to} \limits_{\rm{PK}} S$ hereafter) if:
\begin{itemize}

\item[(i)]

For each $x\in S$ and every open neighborhood $O$ of $x$, $O \cap S_n \neq \emptyset$ for any sufficiently large $n$;

\item[(ii)]

For each $x\notin S$, there exists an open neighborhood $O$ of $x$ such that $O \cap S_n = \emptyset$ for any sufficiently large $n$.

\end{itemize}
(See, e.g., \cite[Section 5.2]{Beer}). The following result then holds.

\begin{lemma} \label{lastlemma}
$\CC$ is closed in $\CM(\CI)$ and $S_n \to S$ in $\CC$ if and only if $S_n \mathop{\to} \limits_{\rm{PK}} S$.
\end{lemma}
\begin{proof}
The proof relies on the characterization given in Proposition \ref{prop:compact_metric}(3).

Assume that $m_{S_n} \to m$ in $\CM(\CI)$. Then $\int_{\CI} \phi \, \mathrm dm_{S_n}\in\{0,\infty\}$ for all $\phi \in \CC_c^+(\CI)$, and hence $m$ must vanish on $e(m)^c$. It follows that $m=m_{e(m)} \in\CC$.

Assume that $S_n\to S$ in $\CC$. First, for every open set $O$ such that $O\cap S \neq \emptyset$, we must have $O \cap S_n \neq \emptyset$ for any sufficiently large $n$ as $m_{S_n} (O) \to \infty$. Second, for each $x\notin S$, one can find $\phi \in \CC_c^+(\mathcal I \setminus S)$ which is positive on some open neighborhood $O$ of $x$. Because $\int_\CI \phi \, \mathrm dm_{S_n} \to \int_\CI \phi \, \mathrm dm_{S}=0$, it must be that $O \cap S_n = \emptyset$ for any sufficiently large $n$. We deduce from this that $S_n \mathop{\to} \limits_{\rm{PK}} S$. The proof of the converse is similar and is thus omitted. The result follows.
\end{proof}

Thanks to Lemma \ref{lastlemma}, we deduce from Proposition \ref{prop:closed_graph} a stability result for pure pbrs.

\begin{corollary}\label{breakthrough}
The correspondence of pure pbrs of player $i,$ defined by
\begin{align*}
\CM(\CI)\twoheadrightarrow \CM(\CI): m^j \mapsto \Sigma^i(m^j):=\{ S\in \CC : m_S\in PBR^i(m^j) \},
\end{align*}
has a compact graph and hence is upper hemicontinuous.
\end{corollary}

\begin{proof} Notice that any pure pbr of player $i$ against a measure $m^j \in \CM(\CI)$ belongs to $\Phi^i(m^j)$ by Proposition \ref{geneprop}(ii). We deduce that $\Sigma^i(m^j)= \CC \cap \Phi^i(m^j)$. As $\CC$ is closed, we can then apply Proposition \ref{prop:closed_graph}. The result follows. \end{proof}

\begin{remark}
Given a sequence $(m^j_n)_{n \geq 0}$ with limit $m^j$ in $\CM(\CI),$ let $\overline S\,\!^i_n$ and $\overline S\,\!^i$ denote the largest closed sets in $\Sigma^i(m^j_n)$ and $\Sigma^i(m^j),$ respectively$,$ as in Lemma \ref{lem:sigma_i}. The sequence $(\overline S\,\!^i_n)_{n \geq 0}$ admits a convergent subsequence with limit $S\in \Sigma^i(m^j),$ but we can only conclude in general that $S \subset \overline S\,\!^i$---unless $\Sigma^i(m^j)$ is a singleton$,$ in which case we have an equality.
\end{remark}}

\section{Contractibility and the AR Property}\label{sec:contractibility}

The aim of this section is twofold. First, we establish in Proposition \ref{prop:contraction} that the space $(\CM(\CI),\vartheta)$ is contractible, which, together with Proposition \ref{prop:compact_metric}, completes the proof of Theorem \ref{thm:compact_AR}. Second, we establish in Proposition \ref{prop:contractible_values} that the correspondence $\Phi$ defined in \eqref{defPhi} has contractible values, which together with the results of the preceding sections completes the proof of Theorem \ref{thm:main}. Both proofs rely on an explicit construction of a contraction of the space $\CM(\CI)$ using convolutions.

Let us first introduce more tools from general topology of metric spaces.

\begin{definition}\label{def:ANR}
A metric space $(E,d)$ is an absolute neighborhood retract (ANR) if$,$ for any continuous map $f:E \rightarrow E'$ into a metric space $(E',d')$ such that $f$ is an homeomorphism between $E$ and $f(E)$ and $f(E)$ is closed in $E',$ there exists an open set $U$ such that $f(E)\subset U$ and a continuous map $g: U \rightarrow f(E)$ such that for all $x\in f(E),$ $g(x)=x$ (i.e.$,$ $f(E)$ is a retract of some neighborhood $U$).
\end{definition}

From Definition \ref{def:AR}, it is clear that an AR is an ANR, and we have the following characterization of ARs.

\begin{proposition}[{\cite[Theorem 8.2]{McLennan}}]\label{prop:ar=contractible_anr}
$(E,d)$ is an AR if and only if it is a contractible ANR.
\end{proposition}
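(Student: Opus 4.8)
The plan is to prove the two implications separately. That an AR is an ANR is immediate from Definitions \ref{def:AR} and \ref{def:ANR} by taking the neighborhood $U$ to be all of $E'$, so the content is the equivalence between being an AR and being contractible, for a space already known to be an ANR; both directions go through an embedding of $E$ as a \emph{closed} subset of a convex subset of a normed linear space and exploit the linear structure there. For ``AR $\Rightarrow$ contractible'' I would embed $E$ as a closed subset $f(E)$ of a convex set $C$ in a normed linear space (for $E$ completely metrizable --- in particular for $E=\CM(\CI)$, which is compact by Proposition \ref{prop:compact_metric} --- one may take $C$ to be the closed convex hull of a Kuratowski image of $E$ in $C_b(E)$, or of a topological copy of $E$ in the Hilbert cube $[0,1]^{\NN}$, so that $f(E)$ stays closed in $C$). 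The AR property applied to this embedding yields a retraction $g\colon C\to f(E)$. Fixing $c_0\in C$, the straight-line homotopy $H_0(c,t):=(1-t)c+tc_0$ contracts $C$ to $c_0$, and
\[ H(y,t):=f^{-1}\bigl(g(H_0(f(y),t))\bigr),\qquad (y,t)\in E\times[0,1], \]
is continuous with $H(\cdot,0)={\rm id}_E$ and $H(\cdot,1)$ constant, so $E$ is contractible in the sense of Definition \ref{def:contractible}.

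For the converse, ``contractible ANR $\Rightarrow$ AR'', embed $E$ as a closed subset $A:=f(E)$ of a convex set $C$ in a normed linear space as above. By Dugundji's theorem $C$ is an AR, hence an ANR; since $A$ is a closed ANR-subset of the metrizable space $C$, the pair $(C,A)$ has the homotopy extension property (Borsuk's homotopy extension theorem). Transporting the contraction of $E$ gives a continuous $D\colon A\times[0,1]\to A$ with $D(\cdot,0)={\rm id}_A$ and $D(\cdot,1)\equiv p$ for some $p\in A$. Now apply the HEP of $(C,A)$ from the endpoint $t=1$: the map on $(C\times\{1\})\cup(A\times[0,1])$ that equals the constant $p$ on $C\times\{1\}$ and equals $D$ on $A\times[0,1]$ is well defined, since the two prescriptions agree on $A\times\{1\}$, and it extends to a continuous $F\colon C\times[0,1]\to A$ with $F|_{A\times[0,1]}=D$. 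Then $r\colon C\to A$, $r(y):=F(y,0)$, restricts to ${\rm id}_A$ on $A$, i.e. $A$ is a retract of $C$; since a retract of an AR is an AR (immediate from the extension characterization of ARs), $A\cong E$ is an AR.

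The ``AR $\Rightarrow$ contractible'' half is routine once a closed embedding into a convex set is in place. I expect the main obstacle to be the converse, and specifically the passage from a \emph{neighborhood} retraction --- which the ANR property supplies for free --- to a \emph{global} retraction onto $A$: the naive fix, sliding points toward $p$ along $D$ as they approach the frontier of the neighborhood, is not continuous when $A$ is non-compact, because $D(\cdot,t)$ need not converge to the constant map uniformly. Channeling the construction through the homotopy extension property of $(C,A)$ is what resolves this, as the HEP packages exactly the boundary compatibility one needs. The external ingredients on which everything then rests are Borsuk's homotopy extension theorem for a closed ANR-pair, Dugundji's theorem that convex subsets of normed (more generally, locally convex) linear spaces are ARs, and the existence of a closed embedding of the space $E$ --- compact in our application --- into such a convex set.
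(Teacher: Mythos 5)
Your proof is correct and follows the standard textbook route for this equivalence: a closed Kuratowski--Wojdys{\l}awski-type embedding of $E$ into a convex subset $C$ of a normed space, the straight-line contraction of $C$ pushed down by the retraction for the forward direction, and Dugundji's theorem plus Borsuk's homotopy extension theorem to upgrade the neighborhood retraction to a global retraction $C\to f(E)$ for the converse. The paper does not prove this statement at all --- it imports it verbatim as Theorem 8.2 of \cite{McLennan} --- so there is nothing to reconcile; the only cosmetic remark is that the homotopy extension step requires only that the \emph{target} $A$ be an ANR (the classical form of Borsuk's theorem), which your hypotheses already supply, so your slightly loose phrasing of the HEP for the pair $(C,A)$ is harmless.
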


This equivalence is useful as there are sufficient conditions for a metric space to be an ANR. The first one can be stated as follows.

\begin{proposition}[{\cite[Proposition 8.3]{McLennan}}]\label{prop:convex_anr}
A metrizable convex subset of a locally convex Hausdorff topological vector space is an ANR.
\end{proposition}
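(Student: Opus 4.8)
The plan is to derive this from Dugundji's extension theorem, which carries the real content of the statement. Recall that theorem: if $Y$ is a metric space, $A\subseteq Y$ is closed, $L$ is a locally convex Hausdorff topological vector space, $C\subseteq L$ is convex, and $\varphi\colon A\to C$ is continuous, then $\varphi$ extends to a continuous map $Y\to C$ (one may even arrange the image to lie in the convex hull of $\varphi(A)$).

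Let $C$ be a metrizable convex subset of a locally convex Hausdorff topological vector space $L$. By Definition~\ref{def:ANR} it suffices to show that for every homeomorphism $f$ of $C$ onto a closed subset $f(C)$ of a metric space $Y$, some neighborhood of $f(C)$ retracts onto $f(C)$. I would in fact produce a retraction defined on all of $Y$. The inverse homeomorphism $f^{-1}\colon f(C)\to C\subseteq L$ is continuous, its domain $f(C)$ is closed in the metric space $Y$, and its target $C$ is convex in the locally convex space $L$; so Dugundji's theorem yields a continuous extension $g\colon Y\to C$ with $g|_{f(C)}=f^{-1}$. Then $f\circ g\colon Y\to f(C)$ is continuous and equals the identity on $f(C)$, so it is a retraction of $Y$ onto $f(C)$. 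This shows $C$ is even an absolute retract in the sense of Definition~\ref{def:AR}; in particular it is an ANR, which is all that is claimed.

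The only nontrivial ingredient is thus Dugundji's theorem itself, which may be quoted or proved inline. A self-contained proof runs as follows. The open set $Y\setminus f(C)$ is metrizable, hence paracompact; cover it by the balls $B\bigl(y,\tfrac13\,\mathrm{dist}(y,f(C))\bigr)$ for $y\in Y\setminus f(C)$, and take a locally finite partition of unity $(\rho_\alpha)$ subordinate to this cover, with $\rho_\alpha$ supported in $B\bigl(y_\alpha,\tfrac13\,\mathrm{dist}(y_\alpha,f(C))\bigr)$; for each $\alpha$ choose $a_\alpha\in f(C)$ with $\mathrm{dist}(y_\alpha,a_\alpha)\le 2\,\mathrm{dist}(y_\alpha,f(C))$. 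Set $g:=f^{-1}$ on $f(C)$ and $g(y):=\sum_\alpha\rho_\alpha(y)\,f^{-1}(a_\alpha)$ for $y\notin f(C)$; convexity of $C$ makes $g(y)$ a well-defined point of $C$, and local finiteness makes $g$ continuous on the open set $Y\setminus f(C)$. The delicate point---and the step I expect to be the main obstacle in a detailed write-up---is continuity of $g$ at a point $y_0\in f(C)$: the choice of radii and of the $a_\alpha$ forces, for $y$ close to $f(C)$, every $a_\alpha$ with $\rho_\alpha(y)>0$ to lie within a fixed multiple of $\mathrm{dist}(y,f(C))$ of $y$, hence within a (slightly larger) fixed multiple of $\mathrm{dist}(y,f(C))$ of $y_0$; thus all points $f^{-1}(a_\alpha)$ entering the convex combination $g(y)$ cluster near points of $C$ whose $f$-images approach $y_0$, and continuity of $f^{-1}$ then yields $g(y)\to f^{-1}(y_0)=g(y_0)$ as $y\to y_0$.
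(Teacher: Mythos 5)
The paper does not prove this proposition; it simply quotes it from McLennan (Proposition 8.3 there), so there is no internal argument to compare against. Your proof via Dugundji's extension theorem is correct and is in fact the standard route to this result: applying the extension theorem to $f^{-1}\colon f(C)\to C$ and composing with $f$ does yield a retraction of all of $Y$ onto $f(C)$, establishing the stronger AR property, and your sketch of the Dugundji construction (the $\tfrac13$-radius cover, the choice of $a_\alpha$ within twice the distance to $f(C)$, and the local-convexity argument for continuity at boundary points of $f(C)$) is the correct and complete skeleton of that theorem's proof.
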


The second sufficient condition we will use states that the closure (in the strong sense of homopotopy-denseness defined below) of an ANR is still an ANR.

\begin{definition}
Let $(E,d)$ be a metric space and $A\subset E$. $A$ is said to be homotopy-dense in $E$ if there exists a continuous map $H:E\times [0,1] \rightarrow E$ such that $H(\cdot,0)={\rm Id}_E$ and $H(E\times (0,1])\subset A$.
\end{definition}

\begin{proposition}[{\cite[Corollary 6.6.7]{Sakai}}] \label{prop:sakai}
Let $(E,d)$ be a metric space and $A$ a homotopy-dense subset of $E$. Then $E$ is an ANR if and only if $A$ is an ANR.
\end{proposition}

The first main result of this section can be stated as follows.

\begin{proposition}\label{prop:contraction}
There exists a continuous map $H: \CM(\CI)\times [0,1] \rightarrow \CM(\CI)$ such that
\begin{enumerate}

\item

for each $m \in \CM(\CI),$ $H(m,0)=m$ and $H(m,1)=0;$

\item

for all $\varepsilon \in (0,1]$ and $m \in \CM(\CI),$ $H(m,\varepsilon) \in \CM_{\rm{loc}}(\CI)$.
\end{enumerate}
\end{proposition}

Before proving Proposition \ref{prop:contraction}, let us first show how this result enables us to complete the proof of Theorem \ref{thm:compact_AR}.

\begin{proof}[Proof of Theorem \ref{thm:compact_AR}]
By Proposition \ref{prop:compact_metric}, $\CM(\CI)$ is compact and metrizable. Proposition \ref{prop:contraction} implies that $\CM(\CI)$ is contractible and that $\CM_{\rm{loc}}(\CI)$ is homotopy-dense in $\CM(\CI)$. The topology induced by $\vartheta$ on $\CM_{\rm{loc}}(\CI)$ coincides with the topology of vague convergence by Proposition \ref{prop:compact_metric}(3), and thus $\CM_{\rm{loc}}(\CI)$ can be identified with a convex subset of the vector space of linear functionals on $\mathcal C^+_c(\CI)$ endowed with the vague topology. Therefore, $\CM_{\rm{loc}}(\CI)$ is a convex subset of a locally convex Hausdorff topological vector space and is metrizable. We deduce from Proposition \ref{prop:convex_anr} that $\CM_{\rm{loc}}(\CI)$ is an ANR.  Because $\CM_{\rm{loc}}(\CI)$ is homotopy-dense in $\CM(\CI)$ and an ANR, we conclude by Proposition \ref{prop:sakai} that $\CM(\CI)$ is an ANR.  As $\CM(\CI)$ is also contractible, we conclude from Proposition \ref{prop:ar=contractible_anr} that it is an AR. Hence the result.
\end{proof}

Let us come back to the proof of  Proposition \ref{prop:contraction}.

\begin{proof}[Proof of Proposition \ref{prop:contraction}]
The proof consists of two steps.

\subparagraph{Step 1}

We first show that it is sufficient to prove the result for $\CI=\RR$. Let $\psi: \CI \rightarrow \RR$ denote a $\mathcal C^1$-diffeomorphism and assume that a function $H$ satisfying properties $1$ and $2$ exists with $\CI=\RR$. Define
\begin{align*}
\widehat H(m,\varepsilon):= H(m\circ \psi^{-1},\varepsilon ) \circ \psi, \quad(m,\varepsilon )\in \CM(\CI)\times [0,1],
\end{align*}
where $m \circ \psi^{-1}$ denotes the image of the measure $m$ by $\psi$, defined for each $B\in \CB(\RR)$ by $m \circ \psi^{-1}(B):= m(\psi^{-1}(B) )$, and $\nu \circ \psi$ the image of the measure $\nu$ by $\psi^{-1}$, so that
\begin{align*}
\forall (m,\varepsilon ,A) \in \CM(\CI)\times [0,1] \times \CB(\CI), \; \widehat H(m,t)(A) = H(m\circ \psi^{-1},\varepsilon ) (\psi(A)).
\end{align*}
We just need to check that $\widehat H$ is continuous and satisfies Properties $1$ and $2$ in Proposition \ref{prop:contraction}. Property $1$ is immediate and Property $2$ follows from the fact that $\psi$ preserves compact sets. To prove continuity, it is sufficient to prove that the mappings \begin{eqnarray*}
\CM(\CI) \to \CM(\RR) : m \mapsto m\circ \psi^{-1}\, \text{ and } \,\CM(\RR) \to \CM(\CI): \nu \mapsto \nu \circ \psi
\end{eqnarray*}
are continuous. As the arguments for the two mappings are similar, we only consider the first one. Thus suppose that $m_n \rightarrow m $ in $\CM(\CI)$. If $O$ is open in $\RR$, then $\psi^{-1}(O)$ is open in $\CI$ and
\begin{align*}
\liminf_{n \to \infty} m_n\circ \psi^{-1} (O) = \liminf_{n \to \infty} m_n (\psi^{-1} (O)) \geq m (\psi^{-1} (O))=m\circ \psi^{-1}(O).
\end{align*}
Similarly, if $K$ is compact in $\RR$, then $\psi^{-1}(F)$ is compact in $\CI$ and
\begin{align*}
\limsup_{n \to \infty} m_n\circ \psi^{-1} (K) = \limsup_{n \to \infty} m_n (\psi^{-1} (K)) \leq m (\psi^{-1} (K))=m\circ \psi^{-1}(K).
\end{align*}
We conclude that $m_n\circ \psi^{-1} \rightarrow m\circ \psi^{-1} $ in $\CM(\RR)$, as desired. Notice for later use that, by choosing $\psi$ as a $\mathcal C^1$-diffeomorphism, we ensure that, if, for some $\varepsilon \in [0,1]$, $H(m\circ \psi^{-1},\varepsilon )$ is absolutely continuous with respect to Lebesgue measure, then so is $\widehat H(m, \varepsilon )$.

\paragraph{Step 2}

We now prove the result for $\CI=\RR$. Define, for all $(m,\varepsilon,x)\in \CM(\RR)\times (0,1] \times \RR$,
\begin{align}\label{eq:defh}
h(m,\varepsilon,x):=  \min \left\{ \int_\RR \rho_{\varepsilon}(x-y)\,m(\mathrm d y) , \frac{1}{\varepsilon^2} \right\}\!,
\end{align}
where $\rho_\varepsilon$ is a continuous function with compact support $[-\varepsilon,\varepsilon]$ defined by
\begin{align*}
\rho_\varepsilon(x)=\frac{1}{\varepsilon} \hskip 0.3mm \max\left \{1- \frac{|x|}{\varepsilon},0\right\} \! , \quad x \in \RR.
\end{align*}
One can easily verify the following properties for all $\varepsilon \in (0,1]$ and $c\in (0,1)$:
\begin{align}\label{eq:prop_rho}
0 \leq \rho_\varepsilon \leq \frac{1}{\varepsilon} \, \indic_{(-\varepsilon,\varepsilon)}, \;\; \int_\RR \rho_\varepsilon(x) \, \mathrm dx=1, \;\;\lim_{\delta \rightarrow \varepsilon} \| \rho_\delta- \rho_\varepsilon \|_\infty =0, \;\; \rho_\varepsilon  \geq \frac{(1-c)}{\varepsilon} \,\indic_{(-c \varepsilon,c\varepsilon)} .
\end{align}
Define then, for all $(m,\varepsilon)\in \CM(\RR) \times (0,1]$,
\begin{align*}
H(m,\varepsilon):= (1-\varepsilon)h(m,\varepsilon,.)\cdot \lambda,
\end{align*}
where $\lambda$ denotes Lebesgue measure, and $H(m,0): =m$. Notice that, for all $(m,\varepsilon)\in \CM(\RR) \times (0,1]$, the measure $H(m,\varepsilon)$ is absolutely continuous with respect to Lebesgue measure and has a bounded density, so that $H(m,\varepsilon)\in \CM_{\rm{loc}}(\RR)$.

Because $H(\cdot,0)= \mathrm{Id} _{\CM(\RR)}$ and $H(\cdot,1)= 0$, we only need to check that $H$ is jointly continuous on $\CM(\RR)\times [0,1]$. Thus consider a sequence $(\varepsilon_n,m_n) \to (\varepsilon,m)$ in $\CM(\RR)\times [0,1]$.

\subparagraph{Case 1: $\varepsilon>0$}

With no loss of generality, we can assume that $\varepsilon_n>0$ for all $n \geq 0$. Denoting $d(.,C)$ the usual distance to a set $C$ in $\RR$, define the sets
\begin{align*}
E^+(m,\varepsilon):=\{ x \in \RR : d(x,e(m))>\varepsilon \},
\\
E^-(m, \varepsilon):=\{ x \in \RR : d(x,e(m))<\varepsilon \},
\\
E^0(m,\varepsilon):=\{ x \in \RR : d(x,e(m))=\varepsilon \}.
\end{align*}
We examine these three sets separately in cases numbered 1.1, 1.2, 1.3, respectively.

\subsubparagraph{Case 1.1}

For each $x\in E^+(m,\varepsilon)$, we have $d(x,e(m))>\varepsilon$, so that $\rho_\varepsilon(x-\cdot) \in \mathcal C_c^+(\RR\setminus e(m))$. It then follows from Proposition \ref{prop:compact_metric}(3) that
\begin{align*}
\int_\RR \rho_\varepsilon (x-y) \,m_n(\mathrm dy) \to  \int_\RR \rho_\varepsilon (x-y)\,m(\mathrm dy) <\infty.
\end{align*}
Moreover, $\rho_{\varepsilon_n}(x-\cdot) \to \rho_\varepsilon(x-\cdot)$ uniformly, and, any sufficiently large $n$, all the supports of these functions are contained in a compact subset $K$ of $\RR\setminus e(m)$. As $\limsup_{n\to \infty} m_n(K) \leq m(K)<\infty$ and $\varepsilon>0$, we have, by \eqref{eq:prop_rho},
\begin{align*}
\int_\RR | \rho_{\varepsilon_n}(x-y)-\rho_{\varepsilon}(x-y)| \,m_n(\mathrm dy) \leq \|\rho_{\varepsilon_n}-\rho_\varepsilon \|_\infty\, m_n(K) \to 0.
\end{align*}
This implies
\begin{align*}
\int_\RR \rho_{\varepsilon_n} (x-y) \,m_n(\mathrm dy) \to \int_\RR \rho_\varepsilon (x-y) \, m(\mathrm dy)
\end{align*}
and therefore, by \eqref{eq:defh},
\begin{align*}
h(m_n,\varepsilon_n,x) \to  h(m,\varepsilon,x),
\end{align*}
as desired.

\subsubparagraph{Case 1.2}

The set $E^-(m,\varepsilon)= e(m)+(-\varepsilon,\varepsilon)$ is open. Let $x\in E^-(m,\varepsilon)$ and $z \in e(m)$ such that $|x-z|< \varepsilon$. Letting $c \in (\frac{|x-z|}{\varepsilon},1)$, we have, by \eqref{eq:prop_rho},
\begin{align*}
\rho_{\varepsilon}(x-\cdot) \geq \frac{(1-c)}{\varepsilon}\,\indic_{(x-c\varepsilon,x+c\varepsilon)}.
\end{align*}
For any sufficiently large $n$, $\|\rho_\varepsilon-\rho_{\varepsilon_n}\|_\infty \leq \frac{c}{\varepsilon}$, and thus
\begin{align*}
\rho_{\varepsilon_n}(x-\cdot) \geq \frac{(1-2c)}{\varepsilon}\, \indic_{(x-c\varepsilon,x+c\varepsilon)}.
\end{align*}
Using that $z\in (x-c\varepsilon,x+c\varepsilon) \cap e(m)$, we have $\lim_{n \to \infty} m_n((x-c\varepsilon,x+c\varepsilon))=\infty$ and thus
\begin{align*}
\int_\RR \rho_{\varepsilon_n}(x-y) \,m_n(\mathrm d y) \geq \frac{1-2c}{\varepsilon} \,m_n((x-c\varepsilon,x+c\varepsilon)) \to \infty.
\end{align*}
It follows that, for any sufficiently large $n$, $h(m,\varepsilon_n,x)=\frac{1}{\varepsilon_n^2}$ by \eqref{eq:defh}. On the other hand, $ \int_\RR \rho_\varepsilon (x-y) \,m(\mathrm dy) =\infty$, so that $h(m,\varepsilon,x)=\frac{1}{\varepsilon^2}$ by \eqref{eq:defh} again, and we conclude that
\begin{align*}
h(m_n,\varepsilon_n,x) \to  h(m,\varepsilon,x),
\end{align*}
as desired.

\subsubparagraph{Case 1.3}

Observe first that the set $E^0(m,\varepsilon)$ is countable and thus has Lebesgue measure 0; indeed, $e(m)^c$ is open and thus is a countable union of disjoint open intervals, each of which contains at most two points in $E^0(m,\varepsilon)$. Given $\phi \in \mathcal C^+_c(\CI)$, thanks to the analysis of Cases 1.1 and 1.2, we can apply the bounded convergence theorem to deduce that
\begin{align*}
L_\phi(H(m_n,\varepsilon_n))=\int_\RR \phi(x)h(m_n,\varepsilon_n,x)\, \mathrm dx \to \int_\RR \phi(x)h(m,\varepsilon,x)\, \mathrm dx= L_{\phi}(H(m, \varepsilon)).
\end{align*}
Because $e(H(m,\varepsilon))=\emptyset$, we conclude by Proposition \ref{prop:compact_metric}(3) that
\begin{align*}
H(m_n,\varepsilon_n) \to H(m,\varepsilon),
\end{align*}
as desired.

\subparagraph{Case 2: $\varepsilon=0$}

For each $n$ such that $\varepsilon_n=0$, we have $H(m_n,0)=m_n$, so we may assume with no loss of generality that $\varepsilon_n >0$ for all $n$.
To prove that $H(m_n,\varepsilon_n)\rightarrow H(m,0)=m$, we use Proposition \ref{prop:compact_metric}(3). We check each property in turn.

\subsubparagraph{Property 1}

Given an open subset $O$ of $\RR$ such that $O \cap e(m)\neq \emptyset$, we have to prove that $ \lim_{n \to \infty} H(m_n,\varepsilon_n)( O) =\infty$. Let us fix some $c\in (0,1)$, and let $z \in O \cap e(m)$ and $\delta>0$ be such that $(z-\delta,z+\delta) \subset O$. There exists some $n_0$ such that, for each $n\geq n_0$, $(y-c\varepsilon_n,y+c\varepsilon_n) \subset O$ for all $y\in (z-\delta,z+\delta)$. By \eqref{eq:defh}--\eqref{eq:prop_rho}, we have
\begin{align}
H(m_n,\varepsilon_n)(O)&= (1-\varepsilon_n) \int_\RR \indic_O(x)h(m_n,\varepsilon _n,x) \, \mathrm dx \nonumber
\\
&= (1-\varepsilon_n) \int \indic_O(x)\min\left\{\int_\RR \rho_{\varepsilon_n}(x-y)dm_n(y), \frac{1}{\varepsilon_n^2}\right\} \hskip 0.2mm \mathrm dx \nonumber
\\
&\geq (1-\varepsilon_n) \int_\RR \indic_O(x)\min\left\{\frac{1-c}{\varepsilon_n}\,m_n((x-c\varepsilon_n,x+c\varepsilon_n)), \frac{1}{\varepsilon_n^2}    \right\} \hskip 0.2mm \mathrm dx. \label{eq:ineq_Hlambda}
\end{align}
In turn, letting $A_n:=\{ x\in O :  (1-c)m_n((x-c\varepsilon_n,x+c\varepsilon_n))<\frac{1}{\varepsilon_n} \}$, we have
\begin{align}
(1-\varepsilon_n) &\int_\RR \indic_O(x)\min\left\{\frac{1-c}{\varepsilon_n}\,m_n((x-c\varepsilon_n,x+c\varepsilon_n)), \frac{1}{\varepsilon_n^2}    \right\} \hskip 0.2mm \mathrm dx \notag
\\
&= \frac{1-\varepsilon_n}{\varepsilon_n} \int_\RR \indic_O(x)\min\left\{(1-c)m_n((x-c\varepsilon_n,x+c\varepsilon_n)), \frac{1}{\varepsilon_n}    \right\} \hskip 0.2mm \mathrm dx  \notag
\\
&= \frac{1-\varepsilon_n}{\varepsilon_n} \int_\RR  \left[ \indic_{A_n}(x)(1-c)m_n((x-c\varepsilon_n,x+c\varepsilon_n)) + \indic_{O\setminus A_n}(x) \frac{1}{\varepsilon_n}    \right] \mathrm dx  \notag
\\
&=\frac{1-\varepsilon_n}{\varepsilon_n}  \left[ \int_\RR \int_\RR \indic_{A_n}(x)(1-c)\indic_{(x-c\varepsilon_n,x+c\varepsilon_n)}(y) \,m_n(\mathrm dy)\, \mathrm dx + \frac{1}{\varepsilon_n}\,\lambda(O\setminus A_n) \right] \notag
\\
&=\frac{1-\varepsilon_n}{\varepsilon_n}  \left[ \int_\RR (1-c)\lambda(A_n \cap(y-c\varepsilon_n,y+c\varepsilon_n)) \, m_n(\mathrm dy) + \frac{1} {\varepsilon_n}\,\lambda(O\setminus A_n) \right]\hskip -1mm, \label{meremichel}
\end{align}
where the last equality follows from Fubini's theorem. Suppose, by way of contradiction, that, along some subsequence, $H(m_n,\varepsilon_n) (O)$ is bounded above by a constant $M$. Then, from \eqref{eq:ineq_Hlambda}--\eqref{meremichel}, it must be that $\frac{1-\varepsilon_n}{ \varepsilon _n^2}\, \lambda (O\setminus A_n) \leq  M$. Because $(y-c\varepsilon_n,y+c\varepsilon_n) \subset O$, we deduce that, for each $n \geq n_0$ in the subsequence and each $y\in(z-\delta,z+\delta)$,
\begin{align*}
\lambda(A_n \cap(y-c\varepsilon_n,y+c\varepsilon_n)) &= \lambda((y-c\varepsilon_n,y+c\varepsilon_n)) - \lambda((O\setminus A_n)\cap(y-c \varepsilon_n , y+c\varepsilon_n))
\\
& \geq 2c\varepsilon_n -M\, \frac{\varepsilon_n^2}{1-\varepsilon_n}
\\
&\geq c\varepsilon_n
\end{align*}
for any sufficiently large $n$. It follows from \eqref{eq:ineq_Hlambda}--\eqref{meremichel} that, along this subsequence,
\begin{align*}
H(m_n,\varepsilon_n)(O)& \geq (1-\varepsilon_n)c(1-c) m_n((z-\delta,z+\delta)) \to \infty,
\end{align*}
a contradiction. We conclude that $H(m_n,\varepsilon_n)(O) \to \infty$, as desired.

\subsubparagraph{Property 2}

Given $\phi \in \mathcal C^+_c(\RR\setminus e(m))$, we have to prove that $L_\phi(H(m_n,\varepsilon_n)) \to L_\phi(m).$ Let $K:=\mathrm{supp} \, \phi$ and, for each $x \in \mathbb R$, $K_x:=K+[-x,x]$. Because $K$ is a compact subset of the open set $\RR\setminus e(m)$, there exists $\delta>0$ such that $K_\delta \cap e(m)=\emptyset$. Because $\lim_{n \to \infty} m_n = m$, we have $\limsup_{n \to \infty} m_n(K_\delta) <\infty$, and hence $\limsup_{n\to \infty} m_n(K_{\varepsilon_n}) <\infty$. Moreover,
\begin{align*}
H(m_n,\varepsilon_n)(K) &\leq \int_K \int_\RR \rho_{\varepsilon_n}(x-y)\, m_n(\mathrm dy) \, \mathrm dx
\\
&=  \int_\RR \int_K \rho_{\varepsilon_n}(x-y) \, \mathrm dx \, m_n(\mathrm dy)
\\
& \leq \int_{\RR} \indic_{K_{\varepsilon_n}}(y)  \int_\RR \rho_{\varepsilon_n}(x-y) \, \mathrm dx \, m_n(\mathrm dy)
\\
&= m_n(K_{\varepsilon_n}),
\end{align*}
where the first inequality follows from \eqref{eq:defh}, the first equality follows from Fubini's theorem, the second inequality follows from the definition of $\rho_\varepsilon$, and the last equality follows from \eqref{eq:prop_rho}. We deduce from this that $\limsup_{n \to \infty} H(m_n, \varepsilon_n)(K) <\infty$. Now, recall that
\begin{align*}
L_{\phi}(H(m_n,\varepsilon_n))&= \int_\RR  \phi(x) h(m_n,\varepsilon_n,x)\, \mathrm dx  = \int_\RR  \phi(x) \min\left\{ \int_\RR \rho_{\varepsilon_n}(x-y)\,m_n(\mathrm dy), \frac{1}{\varepsilon_n^2} \right\}  \mathrm dx.
\end{align*}
By \eqref{eq:prop_rho}, for sufficiently large $n$,
\begin{align*}
\int_\RR \rho_{\varepsilon_n}(x-y)\, m_n(\mathrm dy) \leq \frac{1}{\varepsilon_n} \,m_n((x-\varepsilon_n,x+\varepsilon_n)) \leq \frac{1}{ \varepsilon_n} \, m_n(K_\delta) \leq \frac{1}{\varepsilon_n^2}
\end{align*}
for all $x\in K$, where the last inequality follows from $\limsup_{n \to \infty} m_n(K_\delta)<\infty$. Therefore, for any sufficiently large $n$, we have, by Fubini's theorem,
\begin{align*}
L_{\phi}(H(m_n,\varepsilon_n))&= \int_\RR  \phi(x) \int_\RR \rho_{\varepsilon_n}(x-y) \,m_n(\mathrm dy)\, \mathrm dx
\\
&= \int_\RR   \int_\RR  \phi(x) \rho_{\varepsilon_n}(x-y)\, \mathrm dx \, m_n(\mathrm dy)  \allowdisplaybreaks
\\
&= \int_\RR   \int_\RR  \phi(u+y) \rho_{\varepsilon_n}(u)\, \mathrm du \, m_n(\mathrm d y).
\end{align*}
Using that $\int_\RR \rho_{\varepsilon_n}(u)\, \mathrm du=1$, we obtain
\begin{align}
|L_{\phi}(H(m_n,\varepsilon_n))-L_\phi(m_n)|&= \left| \int_\RR   \int_\RR  [\phi(u+y)-\phi(y)] \rho_{\varepsilon_n}(u)\, \mathrm du \,m_n(\mathrm dy) \right|  \notag
\\
&\leq  \int_\RR   \int_\RR  |\phi(u+y)-\phi(y)| \rho_{\varepsilon_n}(u)\, \mathrm du \, m_n(\mathrm d y) \notag
\\
&= \int_{K_{\varepsilon_n}}\int_\RR  |\phi(u+y)-\phi(y)| \rho_{\varepsilon_n}(u)\, \mathrm du \, m_n(\mathrm dy)  \notag
\\
& \leq \omega_\phi(\varepsilon_n)m_n(K_{\varepsilon_n}) \notag
\\
& \to 0, \label{Kperdusonchat}
\end{align}
where, recalling that $\phi \in \CC^+_c(\CI\setminus e(m))$ and thus is uniformly continuous, $\omega_\phi$ denotes the modulus of continuity of $\phi$. As $m_n \to m$ and $\mathrm{supp}\, \phi\cap e(m)=\emptyset$, we have $L_\phi(m_n) \rightarrow L_\phi(m)$ by Proposition \ref{prop:compact_metric}(3). From this and \eqref{Kperdusonchat}, we conclude that
\begin{align*}
L_\phi(H(m_n,\varepsilon_n)) \to  L_\phi(m),
\end{align*}
as desired. Hence the result.
\end{proof}

Because the correspondence $\Phi$ defined by \eqref{eq:def_phi_product} and characterized by \eqref{egalPhi} has nonempty values by Proposition \ref{prop:existence_pbr} and a closed graph by Proposition \ref{prop:closed_graph}, and because $\CM(\CI) \times \CM(\CI)$ is a compact AR as the product of two compact ARs \cite[Exercise 8.4]{McLennan}, the following result enables us to apply Theorem \ref{thm:fixed_point} to $\Phi$ and thereby to complete the proof of Theorem \ref{thm:main}.

\begin{proposition}\label{prop:contractible_values}
The correspondence $\Phi$ has contractible values.
\end{proposition}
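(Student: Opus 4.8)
Since a finite product of contractible spaces is contractible, the plan is to show that each value $\Phi^i(\mu^j,S^j)$ is contractible, and for this it suffices to contract it continuously onto one of its points. After the identification of a pair $(\mu,S)$ with a measure in $\CM(\CI)$, formula \eqref{defPhi} shows that this value equals
\[ \CP := \{\, m \in \CM(\CI) \,:\, \underline S \subset e(m) \ \text{ and }\ m(\CI\setminus\overline S)=0 \,\},\]
where $\underline S:=\underline S^i$ and $\overline S:=\overline S^i$ are the two fixed closed sets attached to $(\mu^j,S^j)$, with $\underline S\subset\overline S$: indeed $m(\CI\setminus\overline S)=0$ forces $e(m)\subset\overline S$ because $\CI\setminus\overline S$ is open, and, given $e(m)\subset\overline S$, the requirement that the locally finite part $m|_{\CI\setminus e(m)}$ be concentrated on $\overline S\setminus e(m)$ is exactly $m(\CI\setminus\overline S)=0$. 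I would contract $\CP$ onto the point $m^\star:=\infty\cdot\indic_{\overline S}$, i.e. the pure best reply $(0,\overline S)$, which plainly lies in $\CP$.

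The mechanism is to add to $m$ larger and larger multiples of a \emph{fixed finite} reference measure $\nu$ whose support covers $\overline S\setminus\underline S$: for any finite multiple the explosion set is unchanged, while letting the multiple tend to $+\infty$ makes the explosion set grow to $e(m)\cup\mathrm{supp}\,\nu$, which, together with the constraint $\underline S\subset e(m)$ built into $\CP$, is exactly $\overline S$. Concretely, I would fix a countable dense subset $\{p_k\}_{k\ge 1}$ of $\overline S\setminus\underline S$ (if this set is empty, $\CP$ is the single point $m^\star$ and there is nothing to prove) and set $\nu:=\sum_{k\ge 1}2^{-k}\delta_{p_k}$, a finite measure with $e(\nu)=\emptyset$, concentrated on $\overline S$, and with $\mathrm{supp}\,\nu=\overline{\overline S\setminus\underline S}$, so that $\underline S\cup\mathrm{supp}\,\nu=\overline S$. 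Then I would define $H:\CP\times[0,1]\to\CM(\CI)$ by $H(m,t):=m+\frac{t}{1-t}\,\nu$ for $t\in[0,1)$ and $H(m,1):=m^\star$. One checks directly that $H$ takes values in $\CP$: for $t<1$, adding a finite measure concentrated on $\overline S$ changes neither $e(m)$ nor the fact that the locally finite part lives on $\overline S$, and a sum of a regular measure with a finite measure is regular; for $t=1$ one has $m^\star\in\CP$ since $e(m^\star)=\overline S$. Moreover $H(\cdot,0)=\mathrm{id}$ and $H(\cdot,1)\equiv m^\star$. Continuity of $H$ on $\CP\times[0,1)$ is routine: $s\mapsto s\nu$ is $\vartheta$-continuous from $[0,\infty)$ into $\CM_{loc}(\CI)$ because $\nu$ is finite, $t\mapsto t/(1-t)$ is continuous, and addition is $\vartheta$-continuous on $\CM(\CI)\times\CM(\CI)$ — all immediate from Proposition~\ref{prop:compact_metric}-(3), using that the explosion set of a sum is the union of the explosion sets and that $L_\phi$ is additive.

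The one delicate point, and the main obstacle, is continuity at $t=1$; this is where the choice of $\nu$ is essential. Given $(m_n,t_n)\to(m,1)$ in $\CP\times[0,1]$ with $t_n<1$ and $c_n:=t_n/(1-t_n)\to+\infty$, I would verify via Proposition~\ref{prop:compact_metric}-(3) that $m_n+c_n\nu\to m^\star$. For the first condition: if $\CO\subset\CI$ is open with $\CO\cap\overline S=\CO\cap e(m^\star)\neq\emptyset$, then either $\CO$ meets $\mathrm{supp}\,\nu$, whence $\nu(\CO)>0$ and $(m_n+c_n\nu)(\CO)\ge c_n\nu(\CO)\to+\infty$, or else $\CO\cap\overline S\subset\overline S\setminus\mathrm{supp}\,\nu\subset\underline S$, so $\CO$ contains a point of $\underline S\subset e(m_n)$ and $(m_n+c_n\nu)(\CO)=+\infty$ for every $n$. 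For the second condition: if $\phi\in C_c(\CI\setminus\overline S)$ then $\int_\CI\phi\,d\nu=0$ since $\nu$ is concentrated on $\overline S$, and since $e(m)\subset\overline S$ and $m$ is concentrated on $\overline S$ one has $\phi\in C_c(\CI\setminus e(m))$ with $\int_\CI\phi\,dm=0$, so $L_\phi(m_n)\to L_\phi(m)=0$ by Proposition~\ref{prop:compact_metric}-(3); hence $L_\phi(m_n+c_n\nu)=L_\phi(m_n)+c_n\!\int_\CI\phi\,d\nu\to 0=L_\phi(m^\star)$. This yields $m_n+c_n\nu\to m^\star$, so $H$ is continuous, each $\Phi^i(\mu^j,S^j)$ is contractible, and therefore $\Phi$ has contractible values. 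The reason this cannot be done more naively is that the explosion-set map $m\mapsto e(m)$ is not $\vartheta$-continuous, so one cannot ``enlarge the stopping region continuously''; the trick is that the enlargement to $\overline S$ occurs only in the limit $t\to1$, driven by the single fixed measure $\nu$ together with the constraint $\underline S\subset e(m)$.
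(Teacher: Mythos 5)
Your proof is correct, but it takes a genuinely different route from the paper's. The paper contracts $\Phi^i(\mu^j,S^j)$ onto its \emph{smallest} element, the measure $\overline m$ corresponding to $(0,\underline S^i)$: it decomposes $\CI\setminus\underline S^i$ into its connected components $\CO_k$, on each component composes the mollification homotopy built for Proposition \ref{prop:contraction} with a right-continuous projection onto $\overline S^i\cap\CO_k$ so as to stay inside the constraint set, and then pastes the component-wise contractions together. You instead contract the whole set at once onto the \emph{largest} element $(0,\overline S^i)$ by adding $\tfrac{t}{1-t}\,\nu$ for a fixed finite purely atomic measure $\nu$ whose atoms are dense in $\overline S^i\setminus\underline S^i$. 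Your verification holds up: for $t<1$ the explosion set of $H(m,t)$ equals $e(m)$ and the constraint $m(\CI\setminus\overline S^i)=0$ is preserved; joint continuity away from $t=1$ reduces to continuity of addition for $\vartheta$, which does follow from Proposition \ref{prop:compact_metric}-(3) because $e(m+\nu)=e(m)\cup e(\nu)$ and $L_\phi$ is additive; and the delicate limit $t\to 1$ is exactly covered by the two bullet points of that criterion --- density of the atoms gives $c_n\nu(\CO)\to+\infty$ on open sets meeting $\mathrm{supp}\,\nu$, the constraint $\underline S^i\subset e(m_n)$ handles open sets meeting $\overline S^i$ only inside $\underline S^i$, and $L_\phi(\nu)=0$ for $\phi$ supported off $\overline S^i$. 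What your approach buys is brevity and self-containedness: no projections $p_k$, no component-wise pasting, no reuse of the convolution homotopy. What the paper's approach buys is that it recycles machinery already set up for Theorem \ref{thm:compact_AR} and keeps the homotopy in locally finite measures away from $\underline S^i$ for $t>0$; neither feature is needed for the statement, so your argument is a legitimate and arguably cleaner alternative.
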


\begin{proof}
Recall that $\Phi$ take values in $\CM(\CI)\times \CM(\CI)$ and that, denoting $m^i \in \CM(\CI)$ the measure associated to the pair $(\mu^i,S^i)$, we have
\begin{align*}
\forall (m^1,m^2) \in \CM(\CI)\times \CM(\CI), \; \Phi(m^1,m^2)=\Phi^1(m^2)\times \Phi^2(m^1).
\end{align*}
Because the product of two contractible spaces is contractible, it is therefore sufficient to prove that, for all $i=1,2$ and $m^j \in \CM(\CI)$, $\Phi^i(m^j)$ is contractible in $\CM(\CI)$. The measure $m^j$ being fixed, we have, by \eqref{defPhi},
\begin{align*}
\Phi^i(m^j)=\{ m \in \CM(\CI) : \underline S^i \subset e(m) \subset \overline S\,\!^i, \, m(\CI\setminus \overline S\!\,^i)=0\},
\end{align*}
where $\underline S^i$ and $\overline S\,\! ^i$ are the closed subsets of $\CI$ defined in Section \ref{sec:BR}. The open set $O=\CI\setminus \underline S^i$ can be written as a countable union $O=\bigcup_{k\geq 0} O_k$ of disjoint open intervals $O_k \subset \CI$. For each $k\geq 0$, let $F_k : =\overline S\,\!^i \cap O_k$, which is closed in $O_k$. Every measure $m\in \Phi^i(m^j)$ can in turn be written as
\begin{align}\label{eq:decomposition_m_1}
 m= \overline m + \sum_{k \geq 0} m_k,
\end{align}
where $\overline m$ is defined by
\begin{align*}
\overline m (A)= \begin{cases} \infty & \text{if } A\cap \underline S^i \neq \emptyset \\ 0 & \text{if } A\cap \underline S^i = \emptyset \end{cases}, \quad A \in \CB(\CI),
\end{align*}
and, for each $k\geq 0$, $m_k$ is the restriction of $m$ to $O_k$ that we identify with a (not necessarily regular) measure on $\CI$ through the formula
\begin{align*}
m_k(A)= m(A\cap O_k), \quad A \in \CB(\CI).
\end{align*}
Reciprocally, given any sequence $(m_k)_{k \geq 0}$ of regular measures on $O_k$, formula \eqref{eq:decomposition_m_1} defines a regular measure on $\CI$.

The proof consists of two steps. In Step 1, we prove that the contraction $H$ constructed in Proposition \ref{prop:contraction} can be modified to obtain, for each $k \geq 0$, a contraction of the set of measures in $\CM(O_k)$ concentrated on $F_k$. To do so, we simply compose, up to a diffeomorphism, $H$ with the projection on $F_k$. In Step 2, we paste together a family of such contractions using \eqref{eq:decomposition_m_1} to obtain a contraction of $\Phi^i(m^j)$.

\subparagraph{Step 1}

We first prove that, for each $k\geq 0$, the set
\begin{align*}
C_k:= \{ m \in \CM(O_k) : m(O_k\setminus F_k)=0\}
\end{align*}
is contractible for the topology induced by $\vartheta$ on $\CM(O_k)$. Notice that $C_k$ is closed in $\CM(O_k)$ as the mapping $m \mapsto  m(O_k\setminus F_k)$ is lsc and nonnegative. Fix some $k\geq 0$ and let $\psi_k : O_k \rightarrow \RR$ be a $\mathcal C^1$-diffeomorphism. Then the map $\widehat \psi_k : \CM(O_k) \to \CM(\RR)$ defined by
\begin{align*}
\widehat \psi_k(m)=m\circ \psi_k^{-1}, \quad m \in \CM(O_k),
\end{align*}
is a homeomorphism (see the proof of Proposition \ref{prop:contraction}). Letting $\widehat F_k :=\psi_k(F_k)$ and $\widehat C_k:= \widehat \psi_k(C_k)$, consider then the map $p_k : \RR \rightarrow \widehat F_k$ defined by
\begin{align*}
p_k(x)= \max \left\{ y \in \widehat F_k :  |x-y|= \inf_{z \in \widehat F_k}|x-z| \right\}\! , \quad x\in \RR,
\end{align*}
which is a right-continuous version of the orthogonal projection on $\widehat F_k$. The map $p_k$ is nondecreasing and continuous outside of an at most countable set $D_k$. It is easy to verify that for any interval $U \subset \RR$, $p_k^{-1}(U)$ is an interval and that
\begin{align*}
p_k^{-1}(U)=p_k^{-1}(U\cap \widehat F_k) \, \text{ and } \, p_k^{-1}(U) \cap \widehat F_k =U\cap \widehat F_k.
\end{align*}
Define a map $\widehat H_k: \CM(\RR)\times [0,1] \to \CM(\RR)$  by
\begin{align*}
\widehat H_k(m,\varepsilon)=H(m,\varepsilon) \circ p_k^{-1}, \quad (m,\varepsilon)\in \CM(\RR)\times [0,1],
\end{align*}
where $H : \CM(\RR)\times [0,1] \to \CM(\RR)$ denotes the contraction of $\CM(\RR)$ explicitly constructed in Step 2 of the proof of Proposition \ref{prop:contraction}. By construction, $\widehat H_k(m,0)= m \circ p_k^{-1}$ and $\widehat H_k(m,1)=0$ for all $m\in \CM(\RR)$. In particular, $\widehat H_k(m,0)=m$ for all $m \in \widehat C_k$. Therefore, to prove that $\widehat H_k$ is a contraction of $\widehat C_k$, we only need to check that it is jointly continuous on $\widehat C_k\times [0,1]$ and takes values in $ \widehat C_k$. Thus consider a sequence $(m_n, \varepsilon_n)\to (m, \varepsilon)$ in $\widehat C_k \times [0,1]$. We distinguish two cases.

\subparagraph{Case 1: $\varepsilon >0$}

With no loss of generality, we can assume that $\varepsilon_n>0$ for all $n \geq 0$. First, let $U \subset \RR$ be an open interval. Then $p_k^{-1}(U)$ is an interval, whose interior we denote by $U'$. For each $n\geq 0$, we have
\begin{align*}
\widehat H_k(m_n,\varepsilon_n)(U)=H(m_n,\varepsilon_n) (p_k^{-1}(U))= H(m_n,\varepsilon_n) (U') ,
\end{align*}
where the second equality follows from the fact that $H(m_n,\varepsilon_n)$ is absolutely continuous and that $p_k^{-1}(U)\setminus U'$ consists of at most one point, which, whenever it exists, is the left-endpoint of $p_k^{-1}(U)$. Using that $H$ is continuous, we conclude by the same argument that
\begin{align}
\liminf_{n \to \infty} \widehat H_k(m_n,\varepsilon_n)(U) \geq H(m,\varepsilon)(U')= \widehat H_k(m,\varepsilon)(U). \label{Henri}
\end{align}
Next, let $K \subset \RR$ be a compact interval. Then $p_k^{-1}(K)$ is an interval, but it is not necessarily bounded. However, because the measures $m_n$ belong to $\widehat C_k$, using the definition of $H$ and letting $G_k$ be the convex hull of $F_k+[-1,1]$, we have $H(m_n,\varepsilon_n)(\RR\setminus G_k)=0$ and therefore
\begin{align*}
H(m_n,\varepsilon_n)(p_k^{-1}(K))=H(m_n,\varepsilon_n)(p_k^{-1}(K) \cap G_k)
\end{align*}
for all $n \geq 0$. We claim that $p_k^{-1}(K) \cap G_k$ is a (possibly empty) bounded interval. That it is an interval follows from the fact that it is the intersection of two intervals. Now, suppose, by way of contradiction, that it is not bounded. Then there exists an unbounded monotone sequence $(x_m)_{m \geq 0}$ in $p_k^{-1}(K) \cap G_k$. With no loss of generality, assume that this sequence is increasing. Because $p_k(x_m) \in K$ for all $m\geq 0$, we have, for any sufficiently large $m$,
\begin{align*}
p_k(x_m)=x^*:=\max \hskip 0.5mm K\cap \widehat F_k\, \text{ and } \, (x^*,x_m) \cap \widehat F_k =\emptyset.
\end{align*}
Letting $m \to \infty$, this implies $(x^*, \infty)\cap \widehat F_k =\emptyset$ and in turn that $G_k \subset (-\infty,x^*+1]$, a contradiction as $x_m \in G_k$ for all $m\geq 0$. The claim follows. Define $K'$ as the closure of $p_k^{-1}(K) \cap G_k$, we have
\begin{align*}
\widehat H_k(m_n,\varepsilon_n)(K)=H(m_n,\varepsilon_n) (p_k^{-1}(K)\cap G_k)= H(m_n,\varepsilon_n) (K') ,
\end{align*}
where the second equality follows from the fact that $H(m_n,\varepsilon_n)$ is absolutely continuous and that $K' \setminus(p_k^{-1}(K)\cap G_k)$ is finite. Using that $H$ is continuous, we conclude by the same argument that
\begin{align}
\limsup_{n \to \infty} \widehat H_k(m_n,\varepsilon_n)(K) \leq H(m,\varepsilon)(K')= \widehat H_k(m,\varepsilon)(K). \label{Michaux}
\end{align}
It follows from \eqref{Henri}--\eqref{Michaux} that $\widehat H_k$ is continuous at $(m, \varepsilon)$.

\subparagraph{Case 2: $\varepsilon =0$}

For each $n$ such that $\varepsilon_n=0$, we have $H_k(m_n,0)=m_n$, so we may assume with no loss of generality that $\varepsilon_n >0$ for all $n$. First, let $U \subset \RR$ be an open interval. Then $p_k^{-1}(U)$ is an interval, whose interior we denote by $U'$. As in Case 1, we have
\begin{align*}
\liminf_{n \to \infty} \widehat H_k(m_n,\varepsilon_n)(U) \geq H(m,0)(U')=m(U').
\end{align*}
Notice that $p_k^{-1}(U)\setminus U'$ consists of at most one point, which, whenever it exists, is the left-endpoint of $p_k^{-1}(U)$ and does not belong to $\widehat F_k$. Thus $m(U')= m(p_k^{-1}(U))$ as $m$ is concentrated on $\hat F_k$. Using the properties of $p_k$ and the fact that $m \in \widehat C_k$, it follows that
\begin{align*}
m(U')= m(p_k^{-1}(U))=m(U\cap \widehat F_k)=m(U).
\end{align*}
We conclude that
\begin{align}
\liminf_{n \to \infty} \widehat H_k(m_n,\varepsilon_n)(U) \geq m(U')=m(U)=\widehat H_k(m,0)(U). \label{David}
\end{align}
Next, let $K\subset \RR$ be a compact interval, and let $K'$ be the closure of $p_k^{-1}(K) \cap G_k$. As in Case 1, we have
\begin{align*}
\limsup_{n\to \infty} \widehat H_k(m_n,\varepsilon_n)(K) \leq H(m,0)(K')=m(K').
\end{align*}
Using the properties of $p_k$ and the fact that $m \in \widehat C_k$, we have
\begin{align*}
m(K)=m(K\cap \widehat F_k)=m(p_k^{-1}(K))=m(p_k^{-1}(K) \cap G_k).
\end{align*}
Because $G_k$ is a closed interval, any point in $K' \setminus (p_k^{-1}(K) \cap G_k)$ must be an endpoint of $p_k^{-1}(K)$ which does not belong to $p_k^{-1}(K)$ and thus cannot belong to $\widehat F_k$. It follows that
\begin{align*}
m(K)=m(p_k^{-1}(K) \cap G_k)=m(K').
\end{align*}
We conclude that
\begin{align}
\limsup_{n \to \infty} \widehat H_k(m_n,\varepsilon_n)(K) \leq m(K')=m(K)=\widehat H_k(m,0)(K). \label{Kreps}
\end{align}
It follows from \eqref{David}--\eqref{Kreps} that $\widehat H_k$ is continuous at $(m, 0)$.

\vskip 3mm

The analyses of Cases 1--2 above imply that $\widehat H_k$ is a contraction of $\widehat C_k$. Define then
\begin{align*}
H^*_k(m,\varepsilon) :=\widehat H_k(m\circ\psi_k^{-1},\varepsilon) \circ \psi_k, \quad (m,\varepsilon) \in C_k  \times[0,1].
\end{align*}
By composition, $H^*_k$ is continuous and, from the properties of $\widehat H_k$, we have
\begin{align*}
\forall m \in C_k, \;  H^*_k(m,0)=m \, \text{ and } \, H^*_k(m,0)=0.
\end{align*}
Hence $H^*_k$ is a contraction of $C_k$, as desired.

\subparagraph{Step 2} We now prove that $\Phi^i(m^j)$ is contractible. Define the map $H^*: \Phi^i(m^j)\times [0,1]  \to \Phi^i(m^j)$ by
\begin{align}
H^*(m,\varepsilon):= \overline m + \sum_{k \geq 0} H^*_k(m_k,\varepsilon), \quad (m,\varepsilon) \in \Phi^i(m^j) \times [0,1], \label{AA}
\end{align}
where $\overline m$ and the measures $(m_k)_{k\geq 0}$ are defined in \eqref{eq:decomposition_m_1} and, for each $k \geq 0$, $H^*_k$ is the map constructed in Step 1. We only need to prove that $H^*$ is continuous; indeed, that
\begin{align*}
\forall m \in \Phi^i(m^j), \; H^*(m,0)=m \,\text{ and } \,H^*(m,1)=\overline m
\end{align*}
follows directly from \eqref{eq:decomposition_m_1}, \eqref{AA}, and the properties of the maps $H^*_k$. Thus consider a sequence $(m_n,\varepsilon_n) \to (m,\varepsilon)$ in $\Phi^i(m^j)\times [0,1]$. Notice first that, for each $k\geq0$, the sequence $(m_{k,n})_{n \geq 0}$ converges to $m_k$ in $\CM(O_k)$, so that
\begin{align*}
H^*_k(m_{k,n},\varepsilon_n) \to H^*_k(m_k,\varepsilon).
\end{align*}
First, let $U$ be an open subset of $\CI$. If $U \cap \underline S^i \neq \emptyset$, then
\begin{align*}
H^*(m_n,\varepsilon_n)(U)=\infty  \to \infty =   H^*(m,\varepsilon)(U).
\end{align*}
If $U \cap \underline S^i= \emptyset$, then $U$ is the disjoint union of the open sets $U \cap O_k$, and we have
\begin{align*}
\liminf_{n\to \infty} H^*(m_n,\varepsilon_n)(U) &= \liminf_{n \to \infty} \,\sum_{k \geq 0} H^*_k(m_n,\varepsilon_n)(U\cap O_k)
\\
& \geq  \sum_{k \geq 0} \liminf_{n\to \infty} H^*_k(m_{k,n},\varepsilon_n)(U\cap O_k)
\\
&\geq  \sum_k H^*_k(m_k,\varepsilon)(U\cap O_k)
\\
&= H^*(m,\varepsilon)(U).
\end{align*}
Next, let $K$ be a compact subset of $\CI$. If $K\cap \underline S^i \neq \emptyset$, then
\begin{align*}
H^*(m_n,\varepsilon_n)(K)=\infty  \to \infty =   H^*(m,\varepsilon)(K).
\end{align*}
If $K\cap \underline S^i =\emptyset$, then $K$ is the disjoint union of the compact sets $K\cap O_k$, and we have
\begin{align*}
\limsup_{n \to \infty} H^*(m_n,\varepsilon_n)(K) &= \limsup_{n \to \infty} \, \sum_{k \geq 0} H^*_k(m_n,\varepsilon_n)(K\cap O_k)
\\
& \leq  \sum_{k \geq 0} \limsup_n H^*_k(m_{k,n},\varepsilon_n)(K\cap O_k)
\\
&\leq  \sum_{k \geq 0} H^*_k(m_k,\varepsilon)(K\cap O_k)
\\
&= H^*(m,\varepsilon)(K).
\end{align*}
It follows that $H^*$ is continuous at $(m, \varepsilon)$. Hence the result.
\end{proof}

\section{An Example}\label{sec:example}

We consider in this section the diffusion $X$ with state space $\CI=(0,1)$ solution of the SDE
\begin{align}\label{eq:SDE_example}
dX_t= X_t(1-X_t) \, \mathrm dW_t.
\end{align}
This process satisfies the assumptions of Section \ref{sec:model} and is a martingale appearing in filtering equations (see, e.g., \cite{LipsterShiryaev}) that satisfies $X_\infty:=\lim_{t \rightarrow \infty}X_t \in \{0,1\}$ a.s. We let the discount rate $r$ be equal to $0$. In order to satisfy Assumption A2, the payoff functions $R^i$ and $G^i$, $i=1,2,$ must converge to 0 at both boundaries $0$ and $1$.

The payoff functions, which we consider as functions on $[0,1]$ equal to 0 at 0 and 1, are represented in Figure 1. Assumption A1 is satisfied as all these functions are bounded.

\smallskip
\bigskip

\begin{tikzpicture}

\node at (1,-0.75) [blue]{$R^1$};
\node at (1,1.5) [red]{$G^1$};

  \draw[->] (0, 0) -- (12.5, 0) node[right] {$x$};
  \draw[->] (0,-2) -- (0,4.2) node[above] {};
\draw[scale=1, domain=0:2, smooth, variable=\x, blue]  plot ({\x}, {0.125*\x*\x*\x-1.5*\x});
\draw[scale=1, domain=2:6, smooth, variable=\x, blue] plot ({\x}, {-0.125*(\x-4)*(\x-4)*(\x-4)+1.5*(\x-4)});
\draw[scale=1, domain=6:10, smooth, variable=\x, blue] plot ({\x}, {0.125*(\x-8)*(\x-8)*(\x-8)-1.5*(\x-8)});
\draw[scale=1, domain=10:12, smooth, variable=\x, blue]  plot ({\x}, {-0.125*(\x-12)*(\x-12)*(\x-12)+1.5*(\x-12)});
\draw[scale=1, domain=0:12, smooth, dashed, variable=\x, blue]  plot ({\x}, {2});
\node at (6,2) [circle,fill,inner sep=1pt,blue]{};
\draw[scale=1, domain=0:2, smooth, variable=\x, red]  plot ({\x}, {-0.25*\x*\x*\x+3*\x});
\draw[scale=1, domain=2:4, smooth, variable=\x, red] plot ({\x}, {0.75*(\x-3)*(\x-3)*(\x-3)-2.25*(\x-3)+2.5});
\draw[scale=1, domain=4:6, smooth, variable=\x, red] plot ({\x}, {-0.5*(\x-5)*(\x-5)*(\x-5)+1.5*(\x-5)+2});
\draw[scale=1, domain=6:8, smooth, variable=\x, red] plot ({\x}, {+0.5*(\x-7)*(\x-7)*(\x-7)-1.5*(\x-7)+2});
\draw[scale=1, domain=8:10, smooth, variable=\x, red] plot ({\x}, {-0.75*(\x-9)*(\x-9)*(\x-9)+2.25*(\x-9)+2.5});
\draw[scale=1, domain=10:12, smooth, variable=\x, red]  plot ({\x}, {+0.25*(\x-12)*(\x-12)*(\x-12)-3*(\x-12)});
\draw[thick] (2, 0.1) -- (2, -0.1) node[below] {$\frac{1}{6}$};
\draw[thick] (3, 0.1) -- (3, -0.1) node[below] {$\frac{1}{4}$};
\draw[thick] (4, 0.1) -- (4, -0.1) node[below] {$\frac{1}{3}$};
\draw[thick] (6, 0.1) -- (6, -0.1) node[below] {$\frac{1}{2}$};
\draw[thick] (12, 0.1) -- (12, -0.1) node[below] {$1$};
\draw[thick] (0, 0.1) -- (0, -0.1) node[left] {$0$};


\end{tikzpicture}

\begin{tikzpicture}
\node at (1,0.75) [blue]{$R^2$};
\node at (1,3) [red]{$G^2$};

  \draw[->] (0, 0) -- (12.5, 0) node[right] {$x$};
  \draw[->] (0,-2) -- (0,4.2) node[above] {};
\draw[scale=1, domain=0:4, smooth, variable=\x, blue]  plot ({\x}, {0.00625*(5*\x*\x*\x-62*\x*\x+256*\x)});
\draw[scale=1, domain=4:6, smooth, variable=\x, blue]  plot ({\x}, {0.5*((\x-5)*(\x-5)*(\x-5)-3*(\x-5))+0.00625*192});
\draw[scale=1, domain=6:8, smooth, variable=\x, blue]  plot ({\x}, {0.5*(-(\x-7)*(\x-7)*(\x-7)+3*(\x-7))+0.00625*192});
\draw[scale=1, domain=8:12, smooth, variable=\x, blue]  plot ({\x}, {0.00625*(5*(12-\x)*(12-\x)*(12-\x)-62*(12-\x)*(12-\x)+256*(12-\x))});

\draw[scale=1, domain=2:4, smooth, dashed, variable=\x, blue]  plot ({\x}, {0.00625*(304+(\x-2)*68)});
\node at (2,0.00625*304) [circle,fill,inner sep=1pt,blue]{};
\draw[scale=1, domain=3:8, smooth, dashed, variable=\x, blue]  plot ({\x}, {0.00625*(345+(\x-3)*19)});
\node at (3,0.00625*345) [circle,fill,inner sep=1pt,blue]{};
\draw[scale=1, domain=4:8, smooth, dashed, variable=\x, blue]  plot ({\x}, {1+0.00625*192});
\node at (4,1+0.00625*192) [circle,fill,inner sep=1pt,blue]{};
\node at (8,1+0.00625*192) [circle,fill,inner sep=1pt,blue]{};

\draw[scale=1, domain=2:10, smooth, variable=\x, red]  plot ({\x}, {0.00625*(345+5*19)});
\node at (4,{0.00625*(345+5*19)}) [circle,fill,inner sep=1pt,red]{};
\node at (8,{0.00625*(345+5*19)}) [circle,fill,inner sep=1pt,red]{};

\draw[scale=1, domain=0:2, smooth, variable=\x, red]  plot ({\x}, {0.00625*(345+5*19)*0.25*(-0.25*\x*\x*\x+3*\x)});
\draw[scale=1, domain=10:12, smooth, variable=\x, red]  plot ({\x}, {0.00625*(345+5*19)*0.25*(0.25*(\x-12)*(\x-12)*(\x-12)-3*(\x-12))});
\draw[thick] (2, 0.1) -- (2, -0.1) node[below] {$\frac{1}{6}$};
\draw[thick] (3, 0.1) -- (3, -0.1) node[below] {$\frac{1}{4}$};
\draw[thick] (4, 0.1) -- (4, -0.1) node[below] {$\frac{1}{3}$};
\draw[thick] (6, 0.1) -- (6, -0.1) node[below] {$\frac{1}{2}$};
\draw[thick] (8, 0.1) -- (8, -0.1) node[below] {$\frac{2}{3}$};
\draw[thick] (12, 0.1) -- (12, -0.1) node[below] {$1$};
\draw[thick] (0, 0.1) -- (0, -0.1) node[left] {$0$};
\end{tikzpicture}
\vskip -1cm
\begin{center}
Figure 1: The players' payoff functions.
\end{center}

We shall not give explicit formulas for these functions, as this does not help for the proof. The properties of these functions that will be useful are the following:
\begin{enumerate}

\item

For $R^1$, $R^2$, $G^1$, and $G^2$ are symmetric around $1 \over 2$;

\item

$G^1>R^1$ on $(0,1)$, $R^1<0$ on $(0, \frac{1}{3}) \cup (\frac{2}{3},1)$, and $R^1>0$ on $(\frac{1}{3},\frac{2}{3})$;

\item

$G^1$ is decreasing on $[\frac{1}{6},\frac{1}{3}]$, and $G^1(\frac{1}{4})> R^1(\frac{1}{2})> G^1 (\frac{1}{3})$;

\item

$G^2>R^2$ on $(0,1)$, and $G^2$ is concave on $(0,1)$ and constant on $[\frac{1}{6},\frac{5}{6}]$;

\item

$R^2$ is strictly concave and $\mathcal C^2$ on $(0,\frac{1}{3}]$, $(R^2)'(\frac{1}{3})=0$, $R^2<R^2(\frac{1}{3})$ on $(\frac{1}{3},\frac{2}{3})$, and
\begin{align}\label{eq:tangents_R2}
R^2(\tfrac{1}{6})+(R^2)'(\tfrac{1}{6})(\tfrac{1}{3}-\tfrac{1}{6})=G^2(\tfrac{1}{3})\, \text{ and } \,
R^2(\tfrac{1}{4})+(R^2)'(\tfrac{1}{4})(\tfrac{2}{3}-\tfrac{1}{4})=G^2(\tfrac{2}{3}).
\end{align}
\end{enumerate}
The central result of this section can then be stated as follows.

\begin{proposition}\label{prop:nopureMPE}
Consider the lBwa with underlying diffusion process solution to \eqref{eq:SDE_example} and payoff functions illustrated in Figure 1 and satisfying Properties 1--5. Then$,$
\begin{itemize}

\item[(i)]

there exists no pure-strategy MPE$;$

\item[(ii)]

the randomized stopping times $(\mu^1,S^1):=(\alpha \delta_{\frac{1}{2}},\emptyset)$ and $(\mu^2,S^2):=(0,(0,x^*]\cup[1-x^*,1))$ form an MPE$,$ where $x^*$ is the unique solution in $(\tfrac{1}{4},\tfrac{1}{3})$ of $G^1(x^*)=R^1(\tfrac{1}{2})$ and
\begin{align*}
\alpha= \frac{(R^2)'(x^*)}{G^2(\tfrac{1}{2})- R^2(x^*)-(R^2)'(x^*)(\tfrac{1}{2}-x^*)}>0.
\end{align*}
\end{itemize}
\end{proposition}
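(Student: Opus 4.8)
The plan is to treat the two assertions separately; in both we use Propositions \ref{geneprop}, \ref{prop:existence_pbr} and \ref{prop:cacrt_pbr}, the explicit representations of the strategies involved, and the fact that, since $r=0$ and $X$ is a bounded martingale converging to $\{0,1\}$, $X$ is in natural scale: hence, whenever the opponent uses a \emph{pure} strategy $(0,S)$, the best‑reply value function is the concave envelope on $[0,1]$ (boundary value $0$) of the terminal reward, and harmonic pieces are affine. Two preliminaries. First, $x^*$ is well defined: $G^1$ is continuous and strictly decreasing on $[\tfrac16,\tfrac13]$ with $G^1(\tfrac14)>R^1(\tfrac12)>G^1(\tfrac13)$ (Property~3). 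Second, $\alpha>0$: its numerator $(R^2)'(x^*)$ is positive because $R^2$ is strictly concave on $(0,\tfrac13]$ with $(R^2)'(\tfrac13)=0$ and $x^*<\tfrac13$; its denominator is $G^2(\tfrac12)$ minus the value at $\tfrac12$ of the tangent $T_{x^*}$ to $R^2$ at $x^*$, and this is strictly positive since by \eqref{eq:tangents_R2} the tangent $T_{1/4}$ to $R^2$ at $\tfrac14$ satisfies $T_{1/4}(\tfrac23)=G^2(\tfrac23)=G^2(\tfrac12)$, while strict concavity and $x^*>\tfrac14$, $\tfrac12<\tfrac23$ give $T_{x^*}(\tfrac12)<T_{1/4}(\tfrac12)<T_{1/4}(\tfrac23)$.

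For the first assertion, suppose closed sets $(S^1,S^2)$ form an MPE. By Proposition \ref{geneprop}(i) and $R^i<G^i$ on $\CI$, $S^1\cap S^2=\emptyset$; since $G^1>0$ on $\CI$, never stopping yields $\bar J^1\ge 0>R^1$ off $[\tfrac13,\tfrac23]$, so $S^1\subseteq\overline S^1\subseteq[\tfrac13,\tfrac23]$. Next, the concave‑envelope description of $\bar J^2$ together with $R^2<G^2\le G^2(\tfrac12)$ on $\CI$ (as $G^2$ is concave and flat on $[\tfrac16,\tfrac56]$) and $R^2<R^2(\tfrac13)$ on $(\tfrac13,\tfrac23)$ shows $\bar J^2>R^2$ throughout $(\tfrac13,\tfrac23)$; by symmetry and strict concavity of $R^2$ on $(0,\tfrac13]\cup[\tfrac23,1)$ we get $\underline S^2=\overline S^2=(0,q]\cup[1-q,1)$, which is therefore player $2$'s unique pure best reply, where $q=\tfrac13$ if $S^1=\emptyset$ and otherwise $q=q(\min S^1)$, with $q(\cdot)$ the increasing function characterised by "$T_q$ reaches $G^2(\tfrac12)$ at $\min S^1$"; \eqref{eq:tangents_R2} gives $q(\tfrac13)=\tfrac16$ and (using the positive slope of $T_{1/6}$) $q(\tfrac12)<x^*$, so $q\in[\tfrac16,x^*)\subseteq[\tfrac16,\tfrac13)$ when $S^1\neq\emptyset$. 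Now consider player $1$'s best reply to $(0,(0,q]\cup[1-q,1))$; its reward process $\bar Y$ (see \eqref{eq:def_barY}) is bounded above by $\max(\max R^1,G^1(q))=\max(R^1(\tfrac12),G^1(q))$. If $S^1\neq\emptyset$, then $q<x^*$, hence $G^1(q)>G^1(x^*)=R^1(\tfrac12)=\max R^1$, so $\bar J^1\equiv G^1(q)>R^1$ on $(q,1-q)$ and $\bar J^1=G^1>R^1$ on $(0,q]\cup[1-q,1)$; thus $\overline S^1=\emptyset$ and player $1$'s only pure best reply is $S^1=\emptyset$, a contradiction. If $S^1=\emptyset$, then $q=\tfrac13$ and $G^1(\tfrac13)<R^1(\tfrac12)$, so $\bar J^1\le R^1(\tfrac12)$ on $(\tfrac13,\tfrac23)$ with equality only at $\tfrac12$, whence $\tfrac12\in\overline S^1$; and since the reward process attains the value $R^1(\tfrac12)$ only on $\{X=\tfrac12\}$ (as $G^1(\tfrac13)<R^1(\tfrac12)$), optimality of $\tau_{\underline S^1}$ started at $\tfrac12$ forces $\tfrac12\in\underline S^1\subseteq S^1$, again a contradiction. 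Hence no pure MPE exists.

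For the second assertion, set $S^2:=(0,x^*]\cup[1-x^*,1)$ and $\mu^1:=\alpha\delta_{1/2}$. \emph{Player $1$ against $(0,S^2)$.} Since $X\to\{0,1\}$, from any $x\in(x^*,1-x^*)$ we have $\tau_{S^2}<\infty$ a.s.\ and $X_{\tau_{S^2}}\in\{x^*,1-x^*\}$, so by symmetry and the definition of $x^*$ the never‑stopping payoff equals $G^1(x^*)=R^1(\tfrac12)$, while the reward process is everywhere $\le R^1(\tfrac12)$ (using $\max R^1=R^1(\tfrac12)$); hence $\bar J^1\equiv R^1(\tfrac12)$ on $(x^*,1-x^*)$ and $\bar J^1=G^1$ on $S^2$, so $\overline S^1=\{\tfrac12\}$ and (never stopping being optimal) $\underline S^1=\emptyset$. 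By Proposition \ref{prop:cacrt_pbr}, $(\mu^1,S^1)$ is a pbr to $(0,S^2)$ whenever $\emptyset\subseteq S^1\subseteq\{\tfrac12\}$ and $\mu^1$ is locally finite on $\CI\setminus S^1$ and concentrated on $(\{\tfrac12\}\setminus S^1)\cup S^2$; in particular $(\alpha\delta_{1/2},\emptyset)$ qualifies. \emph{Player $2$ against $(\alpha\delta_{1/2},\emptyset)$.} Write $\Lambda^1_t=\mathrm e^{-\alpha L^{1/2}_t}$, $\Gamma^1_t=1-\Lambda^1_t$, and let $v$ equal $R^2$ on $(0,x^*]\cup[1-x^*,1)$, the tangent $R^2(x^*)+(R^2)'(x^*)(\cdot-x^*)$ on $[x^*,\tfrac12]$, and its mirror on $[\tfrac12,1-x^*]$; then $v$ is $C^1$ off $\tfrac12$ with a downward corner at $\tfrac12$, $v\ge R^2$ with $\{v=R^2\}=S^2$ (using strict concavity of $R^2$ on $(0,\tfrac13]$ and $R^2<R^2(\tfrac13)$ on $(\tfrac13,\tfrac23)$). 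Put $\widehat Z_t=\int_0^t G^2(X_s)\,\mathrm d\Gamma^1_s+\Lambda^1_t v(X_t)$. By the Itô–Tanaka formula, $\mathrm d\widehat Z_t=\Lambda^1_t v'(X_t)\,\mathrm dX_t+\tfrac12\Lambda^1_t v''_{\mathrm{ac}}(X_t)\sigma^2(X_t)\,\mathrm dt+\Lambda^1_t\big[\tfrac12\big(v'(\tfrac12^+)-v'(\tfrac12^-)\big)+\alpha\big(G^2(\tfrac12)-v(\tfrac12)\big)\big]\,\mathrm dL^{1/2}_t$; the $\mathrm dt$‑coefficient is $\le 0$ since $v''_{\mathrm{ac}}=(R^2)''\le 0$ off $(x^*,1-x^*)$ and vanishes on it, and the $\mathrm dL^{1/2}$‑coefficient vanishes because $\tfrac12(v'(\tfrac12^+)-v'(\tfrac12^-))=-(R^2)'(x^*)$ while $\alpha(G^2(\tfrac12)-v(\tfrac12))=(R^2)'(x^*)$ by the choice of $\alpha$ and $v(\tfrac12)=R^2(x^*)+(R^2)'(x^*)(\tfrac12-x^*)$. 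Thus $\widehat Z$ is a bounded supermartingale, and a martingale on $[0,\tau_{S^2}]$ as $v''_{\mathrm{ac}}$ vanishes on $(x^*,1-x^*)$. Since $v\ge R^2$, for every $\tau$ one gets $\EE_x[Y_\tau]\le\EE_x[\widehat Z_\tau]\le\widehat Z_0=v(x)$, and $\EE_x[Y_{\tau_{S^2}}]=\EE_x[\widehat Z_{\tau_{S^2}}]=v(x)$ (as $v=R^2$ on $S^2$); hence $\bar J^2=v$, so $\overline S^2=\{v=R^2\}=S^2$, and by Proposition \ref{prop:existence_pbr}, $(0,S^2)=(0,\overline S^2)$ is a pbr to $(\alpha\delta_{1/2},\emptyset)$. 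Therefore $\big((\alpha\delta_{1/2},\emptyset),(0,S^2)\big)$ is a fixed point of the pbr correspondence, i.e.\ an MPE.

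The main obstacle is the first assertion: ruling out \emph{all} candidate pure profiles requires the full strength of Properties 1–5 — in particular both identities in \eqref{eq:tangents_R2}, which pin down the location $\tfrac16$ and give the comparison $q(\tfrac12)<x^*$ — together with the reduction of player $2$'s pure best replies to the one‑parameter family $(0,(0,q]\cup[1-q,1))$. In the second assertion the delicate point is the Itô–Tanaka bookkeeping for $\widehat Z$ and the integrability needed to upgrade the local (super)martingale statements to genuine ones; both hold because every process in sight is bounded (so of class (D)).
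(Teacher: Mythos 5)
Your proof is correct and follows essentially the same route as the paper's: the same contradiction scheme on $S^1=\emptyset$ versus $S^1\neq\emptyset$, using the tangent identities \eqref{eq:tangents_R2} to reduce player 2's pure best replies to $(0,x_0]\cup[x_1,1)$ with $x_0\le\tfrac14<x^*$, and for part 2 the same verification function $v=w^2$ with the identical It\^o--Tanaka cancellation of the local-time term at $\tfrac12$. The only cosmetic slips are that $S^1$ need not be symmetric (so the two endpoints of $S^2$ should be tracked via $\min S^1$ and $\max S^1$ separately, as the paper does) and that the bound actually needed is $q(\min S^1)\le q(\tfrac23)=\tfrac14<x^*$ rather than just $q(\tfrac12)<x^*$; both points follow immediately from the second identity in \eqref{eq:tangents_R2} and do not affect the argument.
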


To prove the Proposition \ref{prop:nopureMPE}(i), we will use a semi-harmonic characterization of best replies that can be found in \cite{Attard18}, proven in a more general framework. To deduce the statement below from \cite{Attard18}, we use that the fine topology associated to $X$ coincides with the usual topology in $(0,1)$, that all points of $(0,1)$ are regular for $X$, and that super-harmonic functions are just concave functions because $X$ is a martingale and $r=0$.

\begin{theorem}[{\cite[Theorem 5.3]{Attard18}}]\label{thm:attard}
Let $\bar J^i$ denote the pbr value function to some pure strategy $(0,S^j)$. Then $\bar J^i$ is continuous and is the pointwise minimum of the family of continuous functions $u: (0,1) \rightarrow \RR$ satisfying $R^i  \leq u \leq \mathrm{cav} \,G^i,$ $u=G^i$ on $S^j,$ and $u$ is concave on each connected component of $(0,1)\setminus S^j,$ where $\mathrm{cav} \,G^i$ is the smallest concave function bounded below by $G^i$.
\end{theorem}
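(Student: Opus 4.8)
The plan is to prove the statement directly with the optimal stopping calculus for the martingale diffusion $X$, leaning on Propositions \ref{geneprop} and \ref{prop:existence_pbr}; the result then also follows from \cite[Theorem 5.3]{Attard18}, since for our $X$ the fine topology coincides with the Euclidean topology on $(0,1)$, every point is regular, and --- $X$ being a martingale with $r=0$ --- the $r$-superharmonic functions are exactly the concave ones. First I would recast the best reply against $(0,S^j)$ as a plain optimal stopping problem: taking $\Gamma^j_t=\indic_{t\geq\tau_{S^j}}$ and $\Lambda^j_{t-}=\indic_{t\leq\tau_{S^j}}$ in \eqref{eq:defY}, with $r=0$ the reward process becomes $Y_t=G^i(X_{\tau_{S^j}})\indic_{\tau_{S^j}<t}+R^i(X_t)\indic_{t\leq\tau_{S^j}}$, with $Y_\infty=G^i(X_{\tau_{S^j}})\indic_{\tau_{S^j}<\infty}$; by Lemma \ref{usepure}, $\bar J^i(x)=\sup_{\tau\in\CT}\EE_x[Y_\tau]$, and crucially $Y_\tau\leq G^i(X_{\tau\wedge\tau_{S^j}})$ pointwise.

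Next I would verify that $\bar J^i$ belongs to the stated family. The bounds $R^i\leq\bar J^i$ and $\bar J^i=G^i$ on $S^j$ are parts (a) and (b) of Proposition \ref{geneprop}; continuity on $\CI\setminus S^j$ is Proposition \ref{prop:existence_pbr}, and continuity at $x_0\in\partial S^j$ follows because the optimal stopping time $\tau_{\overline S^i}\wedge\tau_{S^j}$ exhibited there is independent of the starting point, so letting $x\to x_0$ and using that $x_0$ is regular for $S^j$, that $R^i=G^i$ on $\overline S^i\cap S^j$, and dominated convergence (via A1), one gets $\bar J^i(x)\to G^i(x_0)=\bar J^i(x_0)$. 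The inequality $\bar J^i\leq\mathrm{Cav}(G^i)$ comes from $\EE_x[Y_\tau]\leq\sup_{\rho\in\CT}\EE_x[G^i(X_\rho)]=\mathrm{Cav}(G^i)(x)$, the last equality being the classical identification of the value of an undiscounted martingale optimal stopping problem with the smallest concave majorant of the (bounded, continuous, endpoint-vanishing) reward. Finally, for concavity of $\bar J^i$ on each connected component $I_0$ of $\CI\setminus S^j$, I would use the Snell-envelope identity $Z=\widehat Z$ proved inside Proposition \ref{prop:existence_pbr}: it shows that $t\mapsto\bar J^i(X_{t\wedge\tau_{S^j}})$ is a $\PP_x$-supermartingale for every $x\in I_0$, and since $X$ is in natural scale (being a martingale) and $r=0$, the superharmonic-equals-concave correspondence forces $\bar J^i|_{I_0}$ to be concave. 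Hence $\bar J^i$ is a member of the family, and in particular continuous.

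Then I would show $\bar J^i$ is the smallest member. Given any admissible $u$ and any $x$ (if $x\in S^j$ the identity $u(x)=G^i(x)=\bar J^i(x)$ is immediate), set $M_t:=u(X_{t\wedge\tau_{S^j}})$; extending $u$ from the component $I_0\ni x$ to a concave function and using that $X$ is in natural scale, $M$ is a bounded supermartingale, so $M_\infty$ exists and $u(x)=M_0\geq\EE_x[M_\tau]$ for all $\tau\in\CT$ by optional sampling. One then checks $M_\tau\geq Y_\tau$ a.s.: on $\{\tau<\tau_{S^j}\}$, $M_\tau=u(X_\tau)\geq R^i(X_\tau)=Y_\tau$; on $\{\tau\geq\tau_{S^j}\}\cap\{\tau_{S^j}<\infty\}$, $M_\tau=u(X_{\tau_{S^j}})=G^i(X_{\tau_{S^j}})=Y_\tau$ because $X_{\tau_{S^j}}\in S^j$; and on $\{\tau=\tau_{S^j}=\infty\}$, $M_\tau=M_\infty\geq\liminf_t R^i(X_t)=0=Y_\infty$ because $u\geq R^i$ and $R^i(X_t)\to0$ at the natural boundaries. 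Hence $u(x)\geq\EE_x[Y_\tau]$ for all $\tau$, so $u\geq\bar J^i$, and together with the previous paragraph $\bar J^i$ equals the pointwise minimum of the family.

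I expect the main obstacle to be the boundary analysis rather than the supermartingale bookkeeping: $0$ and $1$ being natural, $X$ only converges to them and the convention $f(X_\infty):=0$ is in force, so one must reconcile this with the lower bound $u\geq R^i$ (using $R^i(X_t)\to0$) to justify $M_\infty\geq Y_\infty$ on $\{\tau_{S^j}=\infty\}$, and likewise take care of the unbounded components $(0,d)$ and $(c,1)$ of $\CI\setminus S^j$, on which the relevant exit time of $X$ may be infinite. The concavity step is then routine once the Snell-envelope identity of Proposition \ref{prop:existence_pbr} is invoked.
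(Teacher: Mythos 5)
The paper does not supply its own proof of this theorem: it simply invokes Theorem~5.3 of Attard (2018) and records the dictionary needed to specialize it (the fine topology of $X$ equals the Euclidean one on $(0,1)$, every interior point is regular, and, since $X$ is a martingale and $r=0$, superharmonic functions are exactly concave functions). Your proposal, by contrast, is a direct, self-contained proof built from the machinery already in the paper: you rewrite the best reply against $(0,S^j)$ as the stopping problem $\sup_\tau\EE_x[Y_\tau]$ with the left-continuous reward $Y_t=G^i(X_{\tau_{S^j}})\indic_{\tau_{S^j}<t}+R^i(X_t)\indic_{t\le\tau_{S^j}}$, then establish membership of $\bar J^i$ in the admissible family (using Proposition~\ref{geneprop} for $R^i\le\bar J^i$ and $\bar J^i=G^i$ on $S^j$, the pointwise bound $Y_\tau\le G^i(X_{\tau\wedge\tau_{S^j}})$ together with the Dynkin--Yushkevich identification $\sup_\rho\EE_x[G^i(X_\rho)]=\mathrm{Cav}(G^i)(x)$ for the upper bound, and the Snell-envelope identity $Z=\widehat Z$ of Proposition~\ref{prop:existence_pbr} to show $t\mapsto\bar J^i(X_{t\wedge\tau_{S^j}})$ is a supermartingale, hence $\bar J^i$ concave on each component), and finally prove minimality by a verification argument with $M_t:=u(X_{t\wedge\tau_{S^j}})$. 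This route is correct and genuinely different from the paper's: it trades the modularity of citing the general Markov-process result for a more elementary and transparent argument that keeps everything inside the one-dimensional martingale-diffusion framework, and it makes explicit exactly where each hypothesis ($R^i\le G^i$, A1--A2, regularity of every point, natural scale) enters.

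One small inaccuracy worth fixing: in the minimality step you write that on $\{\tau\ge\tau_{S^j}\}\cap\{\tau_{S^j}<\infty\}$ one has $M_\tau=u(X_{\tau_{S^j}})=G^i(X_{\tau_{S^j}})=Y_\tau$. On the subevent $\{\tau=\tau_{S^j}<\infty\}$ the reward is $Y_\tau=\Lambda^j_{\tau_{S^j}-}R^i(X_{\tau_{S^j}})=R^i(X_{\tau_{S^j}})$, not $G^i(X_{\tau_{S^j}})$, so the correct statement there is the inequality $M_\tau=G^i(X_{\tau_{S^j}})\ge R^i(X_{\tau_{S^j}})=Y_\tau$ (strict unless $R^i=G^i$ at $X_{\tau_{S^j}}$). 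Since you only need $M_\tau\ge Y_\tau$, this does not affect the conclusion, but the equality claim as written is false. You also silently use that $\mathrm{Cav}(G^i)$ exists and vanishes at $0$ and $1$; in the setting of Section~\ref{sec:example} this is guaranteed because $G^i$ is bounded, continuous, and tends to $0$ at both natural boundaries, and the paper relies on the same fact in its comment following the theorem. With these two points made explicit, your argument is a valid alternative to the citation-based treatment in the text.
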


Notice that, for one-dimensional continuous diffusions, the characterization given in Theorem \ref{thm:attard} has a local character, in the sense that the restriction of $\bar J^i$ to any connected component $(a,b)$ of $(0,1)\setminus S^j$ is the smallest concave function bounded below by $R^i$ that is equal to $G^i$ at $a$ and $b$ (where, if $a=0$, this equality means that the limit at $0+$ is $0$, as implied by the inequalities $R^i  \leq u \leq \mathrm{cav} \,G^i$ together with the fact that $\mathrm{cav} \,G^i(0)=0$, and similarly if $b=1$).

\begin{proof}[Proof of Proposition \ref{prop:nopureMPE}]
(i) Suppose, by way of contradiction, that a pure-strategy Mpe $((0,S^1),(0,S^2))$ exists. We will use Theorem \ref{thm:attard} several times during the proof, as well as the fact that, if $(0,S^i)$ is a pbr to $(0,S^j)$, then $S^i \subset \overline S\,\!^i=\{\bar J^i=R^i\}$ (Proposition \ref{geneprop}), where $\bar J^1,\bar J^2$ are the players' equilibrium brvfs.

We first claim that
\begin{align}\label{eq:pure_eq1}
S^1 \subset [\tfrac{1}{3},\tfrac{2}{3}].
\end{align}
Let $x \in (0,\frac{1}{3})$. If $x\in S^2$, then $x\notin S^1$ as $\bar J^1(x)= G^1(x)>R^1(x)$ on $(0,1)$. If $x\notin S^2$, let $(a,b)$ denote the connected component of $(0,1)\setminus S^2$ containing $x$. By Theorem \ref{thm:attard}, $\bar J^1$ is concave on $(a,b)$, bounded below by $R^1$, and equal to $G^1$ at $a$ and $b$. As $G^1 \geq 0$, if follows that $\bar J^1(x) \geq 0 > R^1(x)$ and thus $x\notin S^1$. A symmetric result holds for $(\frac{2}{3},1)$. Hence \eqref{eq:pure_eq1}.

We next claim that
\begin{align}\label{eq:pure_eq2}
S^2 \subset  (0,\tfrac{1}{3}] \cup [\tfrac{2}{3},1).
\end{align}
The proof of \eqref{eq:pure_eq2} is similar to that of \eqref{eq:pure_eq1} and is thus omitted.

Now, we claim that, if $S^1=S^1 \cap [\frac{1}{3},\frac{2}{3}] \neq \emptyset$, then it must be that
\begin{align}\label{eq:pureeq3}
\exists (x_0,x_1) \in  [\tfrac{1}{6},\tfrac{1}{4}] \times [\tfrac{3}{4},\tfrac{5}{6}], \; S^2=(0,x_0] \cup [x_1,1).
\end{align}
Let $z_0:=\min S^1 \in [\frac{1}{3},\frac{2}{3}]$. Using \eqref{eq:tangents_R2} along with the fact that $R^2$ is strictly concave and increasing on $[\frac{1}{6},\frac{1}{4}]$, we obtain that the mapping $x\mapsto R^2(x)+(R^2)'(x)(z_0-x)$ is decreasing on $[\frac{1}{6},\frac{1}{4}]$, {and satisfies
$R^2(1/4)+(R^2)'(1/4)(z_0-1/4)
 \leq G^2 (2/3) = G^2(z_0)$ and  $G^2(z_0)= G^2(1/3) \leq R^2(1/6)+(R^2)'(1/6)(z_0-1/6),$ where we have used that $G^2$ is constant on $[\frac{1}{6}, \frac{5}{6}]$.}
It follows that there exists a unique point $x_0 \in [\frac{1}{6},\frac{1}{4}]$ such that
\begin{align*}
R^2(x_0)+(R^2)'(x_0)(z_0-x_0)=G^2(z_0).
\end{align*}
By Theorem \ref{thm:attard}, the restriction of $\bar J^2$ to $(0,z_0]$ is the smallest concave function bounded below by $R^2$ and bounded above by $G^2$ which is equal to $G^2$ at $0$ and at $z_0$. It follows that $\bar J^2=R^2$ on $[0,x_0]$ and that $\bar J^2(x)=R^2(x_0)+(R^2)'(x_0)(x-x_0)$ for all $x\in [x_0,z_0]$. A symmetric argument on the interval $[z_1,1)$ with $z_1:= \max S^1$ shows that there exists $x_1\in [\tfrac{3}{4},\tfrac{5} {6}]$ such that $\bar J^2=R^2$ on $[x_1,1]$ and $\bar J^2(x)=R^2(x_1)+(R^2)'(x_1)(x-x_1)$ for all $x\in [z_1,x_1]$. Finally, it must be that $\bar J^2=G^2$ on $[z_0,z_1]$: first, on $[z_0,z_1]$, $\bar J^2 \leq G^2$, with equality on $S^1$, and $G^2$ is constant; second, $\bar J^2$ is concave on any connected component $(a,b)$ of $(z_0,z_1) \setminus S^1$ and equal to $G^2$ at $a$ and $b$. Therefore $\bar J^2$ is constant on any such interval, and thus $\bar J^2=G^2$ on $[z_0,z_1]$. Hence \eqref{eq:pureeq3}.

Now, if \eqref{eq:pureeq3} holds, then it must be that $S^1=\emptyset$. Indeed, any continuous function that is equal to $G^1$ on $S^2$ and is concave on $(x_0,x_1)$ is strictly larger than $R^1$, so that $\bar J^1 > R^1$ using again Theorem \ref{thm:attard}. Because $S^1 \not = \emptyset$ implies \eqref{eq:pureeq3}, we deduce that it must be that $S^1=\emptyset$. We deduce from this that $S^2=(0,\frac{1}{3}]\cup[\frac{2} {3},1)$, and hence that $\bar J^1({1 \over 2}) = G^1({1 \over 3} )= G^1({2\over 3})$ because $r=0$ and $X$ is a martingale such that $X_\infty \in \{0,1\}$ a.s., a contradiction as $G^1(\tfrac{1}{3})=G^1(\tfrac{2}{3}) < R^1(\tfrac{1} {2})$ and $\bar J^1 \geq R^1$. We conclude that no pure-strategy Mpe exists.

\bigskip

(ii) Let $\bar J^i$ denote the brvf to $(\mu^j,S^j)$ for the randomized stopping times defined in the statement of the proposition. First, we easily see that $\bar J^1$ is equal to $G^1$ on $S^2$ and is constant and equal to $R^1(\frac{1}{2})$ on $[x^*,1-x^*]$. It follows that $\overline S\,\!^1=\{\frac{1}{2}\}$. To show that $(0,S^1)=(0,\emptyset)$ is a best reply to $(0,S^2)$, just note that the expected payoff from not stopping starting from any point in $(x^*,1-x^*)$ is equal to $G^1(x^*)=G^1(1-x^*)=R^1(\frac{1}{2})$ because $r=0$ and $X$ is a martingale such that $X_\infty \in \{0,1\}$ a.s. From Proposition \ref{geneprop}(iv), we conclude that $(0,\alpha' \delta_{\frac{1}{2}})$ is a pbr to $(0,S^2)$ for any nonnegative $\alpha'$.

Notice then that $\alpha>0$. Indeed, $R^2$ is strictly concave and increasing on $({1 \over 4} ,\frac{1}{3}]$, so that $(R^2)'(x^*)>0$ and
\begin{align*}
R^2(x^*)+(R^2)'(x^*)(\tfrac{1}{2}-x^*) < R^2(\tfrac{1}{4})+(R^2)'(\tfrac{1}{4})(\tfrac{1}{2}-\tfrac{1}{4})<R^2(\tfrac{1}{4})+(R^2)'(\tfrac{1}{4})(\tfrac{2}{3}-\tfrac{1}{4}) = G^2(\tfrac{1} {2})
\end{align*}
by \eqref{eq:tangents_R2} as $G^2(\tfrac{1} {2})= G^2(\tfrac{2} {3})$. Let us prove that $(0,S^2)$ is a pbr to $(\alpha \delta_{\frac{1}{2}}, \emptyset)$. By Proposition \ref{prop:cacrt_pbr}, it is sufficient to prove that $\bar J^2$ is equal to $R^2$ on $S^2$ and is strictly larger than $R^2$ on $(x^*,1-x^*)$. Let $w^2 :(0,1) \rightarrow \RR$ be equal to $R^2$ on $S^2$ and such that
\begin{align*}
w^2(x)&= R^2(x^*)+(R^2)'(x^*)(x-x^*), \quad x \in [x^*,\tfrac{1}{2}],
\\
w^2(x)&= R^2(1-x^*)+(R^2)'(1-x^*)[x-(1-x^*)],\quad x \in [\tfrac{1}{2},1-x^*].
\end{align*}
Notice that $w^2> R^2$ on $(x^*,1-x^*)$, that $w^2$ is $\mathcal C^1$ on $(0, {1 \over 2}) \cup ({1 \over 2},1)$ and piecewise $\mathcal C^2$, and that $w^2$ is solution to the variational system
\begin{align*}
w^2(0+)&=w^2(1-)=0 ,
\\
w^2 &= R^2 \text{ on } (0,x^*]\cup[1-x^*,1) ,
\\
(w^2)'(x^*)&= (R^2)'(x^*) ,
\\
(w^2)'(1-x^*)&= (R^2)'(1-x^*),
\\
(w^2)''&=0 \text{ on } (x^*,\tfrac{1}{2})\cup(\tfrac{1}{2},1-x^*) ,
\\
(w^2)''&<0 \text{ on }(0,x^*)\cup(1-x^*,1) ,
\\
\alpha [ G^2(\tfrac{1}{2})- w^2(\tfrac{1}{2})]+\tfrac{1}{2}\Delta (w^2)'(\tfrac{1}{2}) &=0.
\end{align*}
Proceeding along the same lines as in \cite[Lemma A.4]{DGM2}, the proof that $\bar J^2=w^2$ now follows from a standard verification argument based on the It\^o--Tanaka--Meyer formula. First, let us observe that for $\tau \in \CT$ and denoting by $L$ the local time of $X$ at $\frac{1}{2}$, we have
\begin{align} \label{eq:payoff_ito}
J^2(x, (\mu^1, S^1), \tau) =  \EE_x\! \left[R^2(X_{\tau}) \Lambda_{\tau}^1 +\int_{[0, \tau)} G^2 (X_s) \Lambda_s^1  \, \mathrm d L_s \right] \hskip -1mm.
\end{align}
Applying the It\^o--Tanaka--Meyer formula to the process $(\Lambda^1_{t} w^2 (X_{t}))_{t \geq 0}$ yields
\begin{align}
w^2(x) = \Lambda^1_{\tau}  w^2(X_{\tau}) & -\int_{[0,{\tau} )} w^2(X_s)\, \mathrm d\Lambda_s^1 - \int_{[0,{\tau})} \Lambda_s^1 (w^2)'(X_s) \, \mathrm dX_s \notag
\\
& - \frac{1}{2} \int_{[0,{\tau} )} \Lambda_s^1 (w^2)''(X_s) X_s^2(1-X_s)^2 \, \mathrm ds  - \frac{1}{2} \,\Delta  ({w^2})' (\tfrac{1}{2}) \int_{[0,{\tau})}  \Lambda_s^1  \, \mathrm dL_s. \label{Hito}
\end{align}
Because $(w^2)'' \leq 0$ on $(0,1)\setminus\{x^*,\frac{1}{2},1-x^*\}$, with equality on $(x^*,1-x^*)\setminus\{ \frac{1}{2}\}$, we have
\begin{align}
-\int_{[0,{\tau} )} \Lambda_s^1 (w^2)''(X_s) X_s^2(1-X_s)^2\, \mathrm ds\geq 0. \label{Itoh}
\end{align}
From the last line of the variational system for $w^2$ and the properties of $L$, we have
\begin{align}
- \,\frac{1}{2} \, \Delta  (w^2)' (\tfrac{1}{2}) \int_{[0,{\tau})}\Lambda_s^1  \, \mathrm dL_s &=  \alpha \big[G^2 (\tfrac{1}{2}) - w^2 (\tfrac{1}{2}) \big] \int_{[0,{\tau})} \Lambda_s^1  \, \mathrm dL_s \notag
\\
&= \int_{[0,{\tau})} \alpha G^2(X_s) \Lambda^1_s   \, \mathrm d L_s - \int_{[0,{\tau})}  \alpha\Lambda_s^1 w^2(X_s) \, \mathrm d L_s \notag
\\
&= \int_{[0,{\tau})} G^2(X_s) \, \mathrm d\Gamma^1_s  + \int_{[0,{\tau})}w^2(X_s)\, \mathrm d\Lambda_s^1. \label{Itho}
\end{align}
We deduce that
\begin{align}
w^2(x)& \geq \EE_x \!\left[\Lambda_{\tau}^1 w^2(X_{\tau }) +  \int_{[0,{\tau})} G^2(X_s) \, \mathrm d\Gamma^1_s  \right] \notag
\\
& \geq  \EE_x \!\left[\Lambda_{\tau}^1 R^2(X_{\tau}) +  \int_{[0,{\tau})}G^2(X_s) \, \mathrm d\Gamma^1_s  \right] \notag
\\
&= J^2(x,(\mu^1, S^1), \tau), \label{lastbutnotleast}
\end{align}
where the first inequality follows from (\ref{Hito})--(\ref{Itho}) along with the fact that the stochastic integral in \eqref{Hito} is a centered integrable variable as $X$ is a bounded martingale, the second inequality follows from the fact that $w^2 \geq R^2 $ on $(0,1)$, and the equality follows from \eqref{eq:payoff_ito}. Taking the supremum over $\tau \in \mathcal T$ in \eqref{lastbutnotleast} yields $w^2 \geq \bar J^2$. It is easy to check that the above inequalities turn into equalities when $\tau=\tau_{S^2}$, which concludes the proof that $w^2=\bar J^2$. Hence the result.
\end{proof}

Let us conclude this section by explaining why the pure Nash equilibria constructed using the method of Hamadene and Zhang \cite{HamadeneZhang} need not be Markovian. In our example, the algorithm in \cite{HamadeneZhang} actually stops after two iterations and leads to the following equilibrium: Assume first that player $1$ never stops. Then, as shown in the proof of Proposition \ref{prop:nopureMPE}, a pure best reply of player $2$ is to use the hitting time $\tau_{S^2}$ with $S^2: =(0,\frac{1}{3}]\cup[ \frac{2}{3},1)$. In turn, facing the strategy $(0,S^2)$, a pure best reply of player $1$ is to use the hitting time $\tau_{S^1}$, where, letting $\bar J^1$ denote the brvf of player $1$ against $(0,S^2)$, $S^1:=\{ \bar J^1 =R^1 \}$ is a nonempty subset of $(\frac{1}{3},\frac{2}{3})$, see again the proof of Proposition \ref{prop:nopureMPE}. Define then the stopping time
\begin{eqnarray*}
\tau^1= \indic_{\tau_{S^1}<\tau_{S^2}} \,\tau_{S^1} + \indic_{\tau_{S^2}<\tau_{S^1}} \,\infty .
\end{eqnarray*}
This strategy consists for player $1$ in stopping in $S^1$ if $X$ did not visit $S^2$ before, and to never stop if $X$ visits $S^2$ before $S^1$ (one could say that player $1$ threatens to play $\infty$ if player $2$ does not stop in $S^2$). On the one hand, $\tau^1$ is a best reply to $\tau_{S^2}$ as it gives the same payoff to player $1$ as $\tau_{S^1}$ against $\tau_{S^2}$. On the other hand, whereas, as shown in the proof of Proposition \ref{prop:nopureMPE}, $\tau_{S^2}$ is not a best reply to $\tau_{S^1}$, it turns out that $\tau_{S^2}$ is a best reply to $\tau^1$ and that $(\tau^1,\tau_{S^2})$ is a Nash equilibrium. Indeed, when facing the strategy $\tau^1$, player $2$ will not stop if $S^1$ is reached before $S^2$ as $G^2>R^2$, and player $2$ will not stop before $X$ reaches $S^1$ or $S^2$ as this would give him a strictly smaller payoff than playing $\tau_{S^2}$.
However, if $X$ reaches $S^2$ before $S^1$, player 2 believes that player $1$ will never stop in the future, and thus the best player $2$ can do is to play a best reply against the stopping time $\infty$, that is, to stop in $S^2$. Notice that we may reverse the roles of the players in this construction and obtain another Nash equilibrium in which player $2$ plays a non-Markovian strategy.

{

\section{Discussion} \label{sec:discussion}

In this section, we briefly discuss the scope and limits of our analysis.

\subsection{More General Classes of Games}

Extending our analysis to lBwas with $I > 2$ players presents two difficulties.

First, the game may continue after one of the players has decided to stop---think, for instance, of firms sequentially exiting from an industry. Then the state of the game should comprise, besides the current value of $X$, also the identities of the remaining players. This calls for a recursive construction of an Mpe, starting from continuation games where only two players remain. At a minimum, we would have to verify that the continuation-equilibrium value functions of the remaining players satisfy the analogues of A0--A2. To add to the difficulty, an equilibrium-selection problem may arise because these value functions need not be uniquely determined.

Second, even if the game is over as soon as one player has decided to stop, the players' rewards may still depend on who stops first---e.g., because of externalities among players. This suggests that the very formalization of the lBwa should be amended, because, accounting for join stopping decisions, up to $2 \sum_{i=1}^I \! \binom{I}{i} = 2(2^I-1)$ different reward functions may have to be defined. The main simplification compared to the previous case is that these reward functions would be exogenously given, rather than being endogenously determined as part of a continuation Mpe.

Despite these difficulties, it should be noted that, in both cases, the challenge is less conceptual than computational. In particular, the characterization of Markovian randomized stopping times in Theorem \ref{representation} and our topological methods are likely to prove useful for equilibrium analysis. We leave these extensions for future work.

To conclude our discussion of the $I$-player lBwa, consider finally the case where the game is over as soon as one player has decided to stop and the players' rewards do not depend on who stops first. An example is Bliss and Nalebuff's \cite{BlissNalebuff} model of private provision of a public good, in which everyone benefit from provision but only the person who takes the initiative has to pay the cost (see also Bensoussan and Friedman \cite[\S1, Remark]{BensoussanFriedman}). As we now show, this gives the $I$-player lBwa the structure of an aggregative game (Selten \cite{Selten0}), which enables us to directly generalize Theorem \ref{thm:main}.

Specifically, consider a $I$-player lBwa in which the expected payoff of every player $i$ is given by
\begin{align}
J^i(x,\tau^i,(\tau^j)_{j\neq i}): = \EE _x\! \left[\indic_{\tau^i \leq  \bigwedge_{j\neq i} \tau^j} \,\mathrm e^{-r \tau^i}R^i (X_{\tau^i}) + \indic_{\tau^i>\bigwedge_{j\neq i} \tau^j}\, \mathrm e^{-r \bigwedge_{j\neq i} \tau^j} G^i\big(X_{\bigwedge_{j\neq i} \tau^j}\big) \right]\hskip -1mm ,\label{coreN}
\end{align}
where the functions $R^i$ and $G^i$ in \eqref{coreN} satisfy  A0--A2. The following result then holds.

\begin{theorem} \label{pg}
Any $I$-player lBwa with payoffs \eqref{coreN} admits an Mpe.
\end{theorem}

\begin{proof} Let $\lambda^{I-1}$ denote $(I-1)$-dimensional Lebesgue measure. For each $i=1,\ldots,I$ and $\lambda^{I-1}$-a.e.\! $(u^j)_{j\neq i}\in [0,1]^{I-1}$, $\bigwedge_{j \neq i} \gamma^j(\cdot,u^j) \in\CT $. As the players' randomization devices are independent, the corresponding conditional survival function is given by
\begin{align}
\lambda^{I-1}\big[\{(u^j)_{j\neq i} \in [0,1]^{I-1}:  \gamma^j(\cdot,u^j) >t \text{ for all } j\neq i\}\big]&= \prod_{j\neq i} \int_0^1\mathbbm{1}_{\{ \gamma^j (\cdot, u^j) >t\}}\, \mathrm du^j \label{lessommeslessommes}
\\
& = \prod_{j\neq i} \Lambda_t^j \notag
\\
&= \prod_{j\neq i} \left[ \mathbbm{1}_{t < \tau_{S^j}} \, \mathrm e^{- \int_{\mathcal I \setminus S^j} L_t^y \, \mu^j(\mathrm dy)}\right] \notag \allowdisplaybreaks
\\
&=\displaystyle \mathbbm{1}_{t < \tau_{\,\bigcup_{j \neq i} S^j} }\, \mathrm e^{- \int_{\mathcal I \setminus  \bigcup_{j \neq i} S^j}L_t^y \,  {\sum} _{j \neq i}\,\mu^j (\mathrm dy)}, \notag
\end{align}
the third equality follows from Theorem \ref{representation}, and the fourth equality follows from the fact that, if $t < \tau_{\,\bigcup_{j \neq i} S^j}$, then $L_t^y = 0$ for all $y \in \bigcup_{j \neq i} S^j$. We can identify $({\sum} _{j \neq i}\,\mu^j, \bigcup_{j \neq i} S^j)$ with a measure $m _{(\mu^j, S^j)_{j \neq i}} \in \CM(\CI)$ defined by
\begin{align*}
m_{(\mu^j, S^j)_{j \neq i}}(A):= \begin{cases}  {\sum} _{j \neq i}\,\mu^j (A)  & \text{if } A\cap \bigcup_{j \neq i} S^j = \emptyset \\ \infty & \text{if } A\cap \bigcup_{j \neq i} S^j  \neq \emptyset \end{cases}, \quad  A \in \CB(\CI).
\end{align*}
By construction, $e(m _{(\mu^j, S^j)_{j \neq i}}) = \bigcup_{j\neq i} e(m^j)$. It follows from \eqref{lessommeslessommes} that a necessary and sufficient condition for $(\mu^i,S^i)$ to be a best reply to $(\mu^j, S^j)_{j \neq i}$ in the $I$-player lBwa is that $(\mu^i,S^i)$ be a pbr to $m_{(\mu^j, S^j)_{j \neq i}}$ in the 2-player lBwa. Now, defining $\Phi^i$ as in \eqref{defPhi} for every player $i$, consider the correspondence $\Phi_I: \CM(\CI)^I \twoheadrightarrow \mathcal M(\mathcal I)^I$ defined by
\begin{align*}
\Phi_I \big((\mu^i,S^i)_{i =1}^I\big):=\bigtimes_{i=1}^I \Phi^i\big(m_{(\mu^j, S^j)_{j \neq i}}\big), \quad (\mu^i,S^i)_{i =1}^I \in  \CM(\CI)^I.
\end{align*}
It is sufficient to prove that $\Phi_I$ has a fixed point. By Proposition \ref{prop:contractible_values}, $\Phi_I$ has contractible values. Hence, by Theorem \ref{thm:fixed_point}, we only need to check that the graph of $\Phi_I$ is closed. To this end, it is sufficient to prove that the summation mapping
\begin{align}
\CM(\CI)^I\to \CM(\CI): (m^i)_{i=1}^I \mapsto \sum_{i=1}^I m^i \label{summation}
\end{align}
is continuous---i.e., that, if, for each $i= 1,\ldots, I$, $m^i_n \to m^i$, then $\sum_{i=1}^I m^i_n \to \sum_ {i=1}^I m^i $. We use Proposition \ref{prop:compact_metric}. First, let $O$ be an open set such that $O \cap e(\sum_{i=1}^I m^i)\neq \emptyset$. We have
\begin{align*}
O \cap e \!\left(\sum_{i=1}^I m^i \right) \! =  O \cap \bigcup_{i=1}^I e(m^i) = \bigcup_{i=1}^I \, [O \cap e(m^i)]
\end{align*}
so that $O \cap e(m^i)  \neq \emptyset$ for some $i$. It follows that $m_n^i (O) \to \infty$ and thus $\sum_{i=1}^I m_n^i (O) \to \infty$. Second, for each $i=1,\ldots, I$, $L_\phi( m_n^i) \to L_\phi(m^i)$ for all $\phi\in \CC_c^+(\CI \setminus e(m^i))$. Hence, for each $\phi \in \CC_c^+(\CI \setminus e(\sum_{i=1}^I m^i )) = \CC_c^+(\CI \setminus \bigcup_{i=1}^I e(m^i))$, we have $L_\phi(\sum_{i=1}^I m_n^i)  = \sum_{i=1}^I L_\phi(m_n^i ) \to \sum_{i=1}^I L_\phi(m^i ) =L_\phi(\sum_{i=1}^I m^i)$. We deduce from Proposition \ref{prop:compact_metric} that the summation mapping \eqref{summation} is continuous, as desired. Hence the result. \end{proof}

Extending our analysis beyond games with a second-mover advantage raises challenges that are of a more conceptual nature. As pointed out by Fudenberg and Tirole \cite{FudenbergTirole} in the deterministic case and Riedel and Steg \cite{RiedelSteg} in the stochastic case, the main challenge when there is a first-mover advantage lies in the treatment of coordination failures, which arise for instance when two firms simultaneously attempt to invest on a market that can profitably support only one of them. These authors argue that accounting for the risk of simultaneous moves while preserving a notion of subgame-perfectness in the spirit of \cite{Selten} requires a modification of the very concept of a strategy and of how strategy profiles determine outcomes. A plausible conjecture is that, in the case of a game with symmetric and sufficiently regular reward functions $R$ and $G$, the existence of an Mpe can be established by pasting together Mpes in Markovian randomized stopping times (in attrition regions where $G > R$) with Mpes in generalized mixed strategies in the sense of \cite{RiedelSteg} (in preemption regions where $R>G$). How to extend this construction to the case of asymmetric reward functions remains an open question.

\subsection{More General Environments}

Extending our analysis to more general Markov processes $X$ or more general payoff functions presents several difficulties.

There is a vast literature on the relation between additive or multiplicative functionals of general Markov processes and measures (see, e.g., \cite{Revuz}, \cite{Fukushima}, and the references therein). However, in order to extend our analysis to other classes of processes $X$, we first have to identify the relevant space of measures, then to find a compact topology which makes this set an absolute retract, and finally to prove the closedness of the graph of best replies for this topology. Each of these steps can be very challenging and such extensions are left for future research. To illustrate the fact that our analysis has to be adapted to each class of processes without entering too much into technicalities, let us consider examples of randomized stopping times in the three most natural cases: nonhomogeneous problems, linear diffusions with jumps, and multidimensional continuous diffusions.

Notice first that the definition of Markovian randomized stopping times given in Section \ref{sec:model} does not depend on which Markov process $X$ we consider.

If $X$ is a nonhomogeneous diffusion, or if the stopping game has a finite horizon $T$, or if the rewards are time-dependent, the natural concept of a (nonhomogeneous) Markovian randomized stopping time will be that of a multiplicative functional of the space-time process $(Z_t,X_t)$ with $Z_t=Z_0+t$ taking values in $[0,\infty)\times \CI$. To any positive measure $\mu$ on $[0,\infty)$, we can associate a multiplicative functional of the process $Z_t$ (and thus also of $(Z_t,X_t)$) through the relation
\begin{align*}
\Lambda^\mu_t(Z_\cdot) := \lim_{u \downarrow 0} \, \mathrm e^{- \int_{(Z_0,Z_t+u] }\,\mu(\mathrm d s)},\quad t \geq 0.
\end{align*}
However, if we consider a decreasing sequence $t_n \downarrow t^*$, the sequence $\Lambda^{\delta_{t_n}}$ does not converge to $\Lambda^{\delta_ {t^*}}$ if $Z_0=t^*$, showing that Proposition \ref{prop:cvg_lambda} does not hold if we use the usual vague convergence of measures. The topology has to be adapted to take care of the specific behavior of $Z_t$.

If $X$ is a homogeneous diffusion with jumps, the csf of a Markovian randomized stopping time can have several discontinuities as, for example, in
\begin{align*}
\Lambda_t(X_\cdot):=\lim_{u \downarrow 0}\prod_{0<s\leq t+u} F(X_{s-},X_s), \quad t \geq 0,
\end{align*}
where $F$ is a measurable map from $\CI\times \CI$ to $[0,1]$ that is identically equal to $1$ on the diagonal. In this case, depending on the distribution of jumps of $X$, several functions $F$ may lead to equivalent csfs (i.e., which coincide a.s.). Therefore, it is clear that any measure representing this strategy will depend on the distribution of the jumps of $X$.

If $X$ is a standard Brownian motion of dimension $d\geq 2$, given a nonnegative Borel function $f : \RR^d \rightarrow \RR$, one may define a randomized Markovian stopping time by
\begin{align*}
\Lambda_t := \lim_{u \downarrow 0} \, \mathrm e^{-\int_0^{t+u} f(X_s)\, \mathrm ds},\quad t \geq 0.
\end{align*}
Using the theory of Revuz measures (see \cite[Example 5.1.1]{Fukushima}), this strategy is naturally associated to the absolutely continuous positive measure $m:=f\cdot \lambda_d$ where $\lambda_d$ stands for the $d$-dimensional Lebesgue measure. Given a countable dense subset $S$ of $\RR^d$, one may construct\footnote{See \cite[Proposition 1.3 and Remark 1.4]{Albeverio} for the construction of $f$ and \cite[theorem 5.1.4 and Example 5.1.1]{Fukushima} for the properties of $\Lambda$, which are proved in \cite{Fukushima} for the additive functional $A_t=-\log(\Lambda_t)$.} a function $f$ such that for every open set $U\subset \RR^d$, $\int_U f(x) \, \mathrm dx=\infty$, while
\begin{align*}
\Lambda_0&=0 \text{ }\PP_x\text{-a.s. if and only if } x \in S,
\\
\Lambda_0&=1 \text{ }\PP_x\text{-a.s. if and only if } x \notin S.
\end{align*}
Moreover, for all $x \notin S$, we have $\PP_x \hskip 0.3mm [\forall t\geq 0,  \,  \Lambda_t>0]=1$. Therefore, the set $S$ on which one stops with probability $1$ is not closed and polar (i.e., for all $x \in \RR^d$, $\PP_x\hskip 0.3mm[\exists t>0\!:\, X_t \in S]=0$).\footnote{ However, $S$ is finely closed (see, e.g., \cite[Appendix A.2]{Fukushima} for the definition of the fine topology).} Extending formally the definition of $e(m)$ we used on $\CM(\CI)$ to positive measures on $\RR^d$ gives $e(m)=\RR^d$ with $m=f\cdot \lambda_d$, and we check easily that $m$ is not outer regular with respect to open sets. The set of measures we consider and the chosen topology has thus to be adapted when $X$ is a multidimensional diffusion, which seemingly requires to take care of the many subtleties appearing when going from dimension 1 to higher dimensions in the study of additive functionals or in potential theory. Notice finally that the proof of Proposition \ref{prop:cvg_lambda}, which is crucial to prove the closedness of the graph of the best reply correspondence, relies on the existence and properties of local times of $X$ at every point of $\CI$, whereas such local times are known not to exist for multidimensional Brownian motion.}

\newpage

\section*{Appendix} \label{sec:appendix}

\renewcommand{\thesection}{A}

\numberwithin{equation}{section}

\setcounter{equation}{0}
\setcounter{subsection}{0}

\newtheorem{applem}{Lemma}[section]

\subsection{Proof of Proposition \ref{geneprop}}

Point $(a)$ follows from the fact that stopping immediately is suboptimal in problem \eqref{optstop1}. Point $(b)$ follows from the fact that, for $x \in S^j$, the payoff of player $i$ is $G^i(x)$ if he does not stop immediately and $R^i(x)\leq G^i(x)$ otherwise. Let us now prove point $(c)$. Under the stated condition, by continuity, there exist $C \in \RR$ and $\varepsilon,\delta>0$ such that
\begin{align}\label{eq:GRepsilon}
\forall y\in [x-\delta,x+\delta],\; G^i(y)\geq C \geq  R^i(y)+\varepsilon.
\end{align}
Using that $\tau^i=\infty$ is suboptimal in problem \eqref{optstop1}, we have for all $y\in [x-\delta,x+\delta]$, letting $\tau_x$ and $\tau_\delta$ denote respectively the hitting time of $x$ and the exit time of $[x-\delta,x+\delta]$:
\begin{align}
\bar J^i(y) &\geq \EE_y\!\left[ \int_{[0,\infty)} \mathrm e^{-rs} G^i(X_s) \, \mathrm d\Gamma^j_s \right] \notag
\\
&=\EE_y \!\left[ \int_{[0,\tau_{x}]} \mathrm e^{-rs} G^i(X_s)\, \mathrm d\Gamma^j_s \right] \notag
\\
&= \EE_y\! \left[ \indic_{\tau_x < \tau_\delta}\int_{[0,\tau_{x}]} \mathrm e^{-rs} G^i(X_s)\, \mathrm d\Gamma^j_s \right]+\EE_y\!\left[\indic_{\tau_x > \tau_\delta} \int_{[0,\tau_{x}]}\mathrm e^{-rs} G^i(X_s)\, \mathrm d\Gamma^j_s \right]  \notag
\\
& \geq C \EE_y\hskip 0.3mm[ \mathrm e^{-r\tau_x} \indic_{\tau_x < \tau_\delta} ]+\EE_y\!\left[\indic_{\tau_x > \tau_\delta} \int_{[0,\tau_{x}]} \mathrm e^{-rs} G^i(X_s)\, \mathrm d\Gamma^j_s \right]\hskip -1mm, \label{chat}
\end{align}
where the second inequality follows from the fact that $\int_{[0,\tau_x]} \, \mathrm d\Gamma^j_s=1$ when $\tau_x<\infty$ as $x\in S^j$. Consider the last term on the right hand side of \eqref{chat}. We have
\begin{align*}
\EE_y &\! \left[\indic_{\tau_x > \tau_\delta} \int_{[\tau_\delta,\tau_{x}]} \mathrm e^{-rs} |G^i(X_s)|\, \mathrm d\Gamma^j_s \right]
\\
&= \EE_y \!\left[\indic_{\tau_x > \tau_\delta}\,\mathrm  e^{-r \tau_\delta} \left[ |G^i(X_{\tau_\delta})|(\Gamma^j_{\tau_\delta}-\Gamma^j_{\tau_\delta -})+  \Lambda^j_{\tau_\delta}\int_{(\tau_\delta,\tau_{x}]} \mathrm e^{-r(s-\tau_\delta)} |G^i(X_s)| \, \mathrm d(\Gamma^j_s\circ \theta_{\tau_\delta}) \right] \right]
\\
&= \EE_y\!\left[\indic_{\tau_x > \tau_\delta} \,\mathrm e^{-r \tau_\delta} \Lambda^j_{\tau_\delta-}\int_{[\tau_\delta,\tau_{x}]} \mathrm e^{-r(s-\tau_\delta)} |G^i(X_s)|\, \mathrm  d(\Gamma^j_s\circ \theta_{\tau_\delta}) \right]\hskip -1mm,
\end{align*}
where the first equality follows from \eqref{mark}, and the second equality follows from the facts that $\Gamma^j_{\tau_\delta}-\Gamma^j_{\tau_\delta -}=\Lambda^j_{\tau_\delta-}-\Lambda^j_{\tau_\delta}$ and that $\Lambda^j$ is continuous except at $\tau_{S^j}$ where it jumps to 0. Using this result, we have, for some constant $C'>0$,
\begin{align}
&\left|\EE_y\!\left[\indic_{\tau_x > \tau_\delta} \int_{[0,\tau_{x}]}\mathrm  e^{-rs} G^i(X_s)\,\mathrm d\Gamma^j_s \right]\right| \notag
\\
&\leq \EE_y\!\left[\indic_{\tau_x > \tau_\delta} \int_{[0,\tau_\delta)} \mathrm e^{-rs} |G^i(X_s)|\,\mathrm d\Gamma^j_s \right]+ \EE_y\!\left[\indic _{\tau_x > \tau_\delta} \int_{[\tau_\delta,\tau_{x}]} \mathrm e^{-rs} |G^i(X_s)|\,\mathrm  d\Gamma^j_s \right] \notag
\\
&\leq \sup_{[x-\delta,x+\delta]} |G^i| \, \EE_y[\indic_{\tau_x > \tau_\delta}] + \EE_y\!\left[\indic_{\tau_x > \tau_\delta}\, \mathrm  e^{-r \tau_ \delta} \Lambda^j_{\tau_\delta-}\int_{[\tau_\delta,\tau_{x}]} \mathrm e^{-r(s-\tau_\delta)} |G^i(X_s)|\,\mathrm d(\Gamma^j_s\circ \theta_{\tau_ \delta}) \right]  \allowdisplaybreaks \notag
\\
&\leq \sup_{[x-\delta,x+\delta]} |G^i| \, \EE_y[\indic_{\tau_x > \tau_\delta}] + \EE_y\!\left [\indic_{\tau_x > \tau_\delta} \, \EE_{X_{\tau_\delta}} \!\!\left[\sup_{t \geq 0}\,\mathrm e^{-rt}|G^i(X_t)|\right] \right]\notag
\\
&\leq C' \PP_y\hskip 0.3mm[\tau_x > \tau_\delta], \label{chaton}
\end{align}
where the third inequality follows from the Markov property, and the fourth inequality follows from assumption A1 along with the fact that $X_{\tau_\delta} \in \{x-\delta,x+\delta\}$ $\PP_y$-almost surely. From (\ref{chat})--\eqref{chaton}, we deduce that
\begin{align*}
\bar J^i(y) \geq C \EE_y \hskip 0.3mm[ \mathrm e^{-r\tau_x} \indic_{\tau_x < \tau_\delta} ] -C' \PP_y\hskip 0.3mm[\tau_x > \tau_\delta].
\end{align*}
The above lower bound is a continuous function of $y$ that is equal to $C$ at $x$ and to $-C'$ at $x-\delta$ and $x+\delta$. Therefore, by \eqref{eq:GRepsilon}, there exists $\delta'\in (0,\delta)$ such that $\bar J^i(y) > R^i(y)$ for all $y\in [x-\delta',x+\delta']$. This proves (c).

Finally, points (i)--(iv) can be proven exactly as in \cite[Proposition 1]{DGM2}. Hence the result.

\subsection{Proof of Equation \eqref{eq:Z=hatZ}}

For the sake of completeness, we show how to deduce \eqref{eq:Z=hatZ} from the arguments in \cite{EKLM}. Recall that $Z^x$ is the Snell envelope on the stochastic basis $(\Omega,\CF,(\CF_t)_{t\geq 0},\PP_x)$ of the process $\bar Y$ defined by
\begin{align*}
\bar Y_t:=\int_{[0,t]}\mathrm e^{-rs}G^i(X_s)\,\mathrm d\Gamma^j_s+ \Lambda^j_{t}\,\mathrm e^{-rt} R^i(X_t), \quad t \geq 0,
\end{align*}
and that $\hat Z$ is defined by
\begin{align*}
\widehat Z_t:=\int_{[0,t]}\mathrm e^{-rs}G^i(X_s)\,\mathrm d\Gamma^j_s+ \Lambda^j_{t}\,\mathrm e^{-rt}\bar J^i(X_t), \quad t \geq 0.
\end{align*}
First, it is clear that $\widehat Z\geq \bar Y$. Then, recall that (see \cite[Lemma 3.4 and the references therein]{EKLM}, noticing that we work on the smaller canonical space of continuous trajectories), for every stopping time $\tau$ of $(\CF^0_t)_{t\geq 0}$ and every stopping time $\rho$ of $(\CF^0_{t+})_{t\geq 0}$ such that $\rho \geq \tau$, there exists an $\CF^0_{\tau}\otimes \CF^0_\infty$ measurable random variable $U: \Omega \times \Omega \rightarrow [0,\infty]$ such that
\begin{itemize}

\item

$U(\omega,\tilde \omega)=0$ if $\tau(\omega)=\infty$ or if $X_0(\tilde \omega)\neq X_{\tau}(\omega)$;

\item

for each $\omega \in \Omega$, $U(\omega,\cdot)$ is a stopping time of $(\CF^0_{t+})_{t \geq 0}$;

\item

for all $\omega \in \Omega$ such that $\tau(\omega)<\infty$, $\rho(\omega)=\tau(\omega)+U(\omega,\theta_{\tau(\omega)}(\omega))$.

\end{itemize}
We deduce that, on the event $\{\tau<\infty\}$,
\begin{align*}
\EE_x \hskip 0.3mm [ \bar Y_{\rho} \! \mid \! \CF_{\tau}]& =\EE_x \! \left[\int_{[0,\rho]}\mathrm e^{-rs}G^i(X_s)\,\mathrm d\Gamma^j_s+ \Lambda^j_{\rho}\,\mathrm e^{-r\rho} R^i(X_\rho) \, \Big| \, \CF_{\tau}\right] \allowdisplaybreaks
\\
&=\EE_x \! \left[\int_{[0,\tau]}\mathrm e^{-rs}G^i(X_s)\, \mathrm d\Gamma^j_s+\int_{(\tau,\rho]}\mathrm e^{-rs}G^i(X_s)\,\mathrm d\Gamma^j_s+  \Lambda^j_{\rho}\,\mathrm e^{-r\rho} R^i(X_\rho) \, \Big| \, \CF_{\tau}\right]
\\
&=\int_{[0,\tau]}\mathrm e^{-rs}G^i(X_s)\,\mathrm d\Gamma^j_s+ \Lambda^j_{\tau}\,\mathrm e^{-r\tau}\EE_x\bigg[ \int_{(0,U(\omega,\theta_\tau (\omega))]} \mathrm e^{-rs}G^i(X_s)\,\mathrm d(\Gamma^j \circ \theta_\tau)_s
\\
&\qquad + ( \Lambda^j_{U(\omega,\theta_\tau(\omega))} \circ \theta_\tau) \,\mathrm e^{-rU(\omega,\theta_\tau(\omega))} R^i(X_{\tau+U(\omega,\theta_\tau(\omega))}) \, \Big| \, \CF_{\tau}\bigg]
\\
&=\int_{[0,\tau]}\mathrm e^{-rs}G^i(X_s)\,\mathrm d\Gamma^j_s+ \Lambda^j_{\tau}\,\mathrm e^{-r\tau}\EE_x\bigg[ \int_{[0,U(\omega,\theta_\tau (\omega))]} \mathrm e^{-rs}G^i(X_s)\,\mathrm d(\Gamma^j \circ \theta_\tau)_s
\\
&\qquad + ( \Lambda^j_{U(\omega,\theta_\tau(\omega))} \circ \theta_\tau) \,\mathrm e^{-rU(\omega,\theta_\tau(\omega))} R^i(X_{\tau+U(\omega,\theta_\tau(\omega))}) \, \Big| \, \CF_{\tau}\bigg]
\\
&\leq \int_{[0,\tau]}\mathrm e^{-rs}G^i(X_s)\,\mathrm d\Gamma^j_s+ \Lambda^j_{\tau}\,\mathrm e^{-r\tau}\bar J^i(X_\tau),
\\
&= \widehat Z_\tau,
\end{align*}
where the third equality follows from \eqref{mark} and the decomposition of stopping times, the fourth equality follows from the fact that $\Lambda^j_\tau=0$ whenever $\Gamma^j \circ \theta_\tau$ has a jump at time $0$ by \eqref{mark}, which allows us to replace the integral over $(0,U(\omega,\theta_\tau(\omega))]$ by an integral over $[0,U(\omega,\theta_\tau(\omega))]$, and the inequality follows from the Markov property. We deduce that $\EE_x \hskip 0.3mm [ \bar Y_{\rho}\! \mid\! \CF_{\tau}]\leq  \widehat Z_\tau$ as it is an equality on $\{\tau=\infty\}$. Because for each $x\in \CI$, every stopping time in $\CT$ is $\PP_x$-a.s.\! equal to a stopping time of $(\CF^0_{t+})_{t\geq 0}$ \cite[Lemma I.1.19]{JacodShiryaev}), we deduce that, for every stopping time $\tau$ of $(\CF^0_t)_{t \geq 0}$,
\begin{align*}
Z^x_\tau= \mathop{\mathrm{ess\,sup}} \limits _{\rho \geq \tau , \,\rho \in \CT}\,\EE_x \hskip 0.3mm[ \bar Y_{\rho} \! \mid \! \CF_{\tau}] \leq \widehat Z_\tau.
\end{align*}
To prove the reverse inequality, it is sufficient to prove that $\EE_x\hskip 0.3mm[\widehat Z_\tau] \leq \EE_x \hskip 0.3mm[Z^x_\tau]$. By \cite[Proposition 2.4]{EKLM},
\begin{align*}
\forall \nu \in \Delta(\CI),\; \int_\CI \bar J^i(y) \,\nu(\mathrm d y)=\sup_{\rho \in \CT^0}    \EE_{\nu}\hskip 0.3mm[\bar Y_{\rho}],
\end{align*}
where $\CT^0$ denotes the set of stopping times of the canonical filtration $(\CF^0_t)_{t \geq 0}$. Let $\hat{\nu}$ denote the finite measure on $\CI$ defined by
\begin{align*}
\hat \nu(A):=\EE_x[\Lambda^j_{\tau}\,\mathrm e^{-r\tau}\indic_{A}(X_\tau)], \quad A \in \CB(\CI).
\end{align*}
Whenever $\nu \not =0$, define the probability $\nu:=\frac{\hat{\nu}}{\hat \nu(\CI)}$. Then, denoting by $\tilde \Omega$ a copy of the canonical space endowed with the probabilities $\tilde \PP_y : =\PP_y$ for $y\in \CI$, we have
\begin{align}
\EE_x \hskip 0.3mm [\Lambda^j_{\tau}\,\mathrm e^{-r\tau}\bar J^i(X_\tau)]&= \int_\CI \bar J^i(y) \, \hat \nu (\mathrm d  y) = \hat \nu(\CI) \sup_{\rho \in \CT^0}\EE_{\nu} \hskip 0.3mm [\bar Y_{\rho}] =\sup_{\rho \in \CT^0} \EE_x \hskip 0.3mm[\Lambda^j_{\tau}\,\mathrm  e^{-r\tau} \, \tilde \EE_{X_\tau}[\bar Y_{\rho}] ]. \label{truc}
\end{align}
We deduce that
\begin{align*}
\EE_x[\widehat Z_\tau] &\leq \sup_{\rho \in \CT^0} \EE_x\!\left[\int_{[0,\tau]}\mathrm e^{-rs}G^i(X_s)\,\mathrm d\Gamma^j_s+\Lambda^j_{\tau}\,\mathrm e^{-r\tau}\,\tilde\EE_{X_\tau}[\bar Y_{\rho}] \right]
\\
& = \sup_{\rho \in \CT^0} \EE_x \bigg[\int_{[0,\tau]}\mathrm e^{-rs}G^i(X_s)\,\mathrm d\Gamma^j_s+\Lambda^j_{\tau}\,\mathrm e^{-r\tau}\,\EE_x\bigg[ \int_{[0,\rho\circ \theta_\tau ]}\mathrm e^{-rs}G^i(X_s)\,\mathrm d(\Gamma^j \circ \theta_\tau)_s
\\
&\qquad\qquad +  (\Lambda^j_{\rho\circ \theta_\tau} \circ \theta_\tau) \,\mathrm e^{-r(\rho\circ \theta_\tau)} R^i(X_{\tau+\rho\circ \theta_\tau}) \,\Big|\, \CF_{\tau}\bigg]\bigg]
\\
& = \sup_{\rho \in \CT^0} \EE_x \bigg[\int_{[0,\tau]}\mathrm e^{-rs}G^i(X_s)\,\mathrm d\Gamma^j_s+\Lambda^j_{\tau}\,\mathrm e^{-r\tau}\,\EE_x\bigg[ \int_{(0,\rho\circ \theta_\tau ]}\mathrm e^{-rs}G^i(X_s)\,\mathrm d(\Gamma^j \circ \theta_\tau)_s
\\
&\qquad\qquad +  (\Lambda^j_{\rho\circ \theta_\tau} \circ \theta_\tau) \,\mathrm e^{-r(\rho\circ \theta_\tau)} R^i(X_{\tau+\rho\circ \theta_\tau}) \,\Big|\, \CF_{\tau}\bigg]\bigg]
\\
&=  \sup_{\rho \in \CT^0} \EE_x \!\left[\int_{[0,\tau+\rho\circ \theta_\tau]} \mathrm e^{-rs}G^i(X_s)\,\mathrm d\Gamma^j_s+ \Lambda^j_{\tau+\rho\circ \theta_\tau}\,\mathrm e^{-r(\tau +\rho\circ\theta_\tau)}R^i(X_{\tau+\rho\circ \theta_\tau})\right]
\\
&=  \sup_{\rho \in \CT^0} \EE_x [ \bar Y_{\tau+\rho\circ \theta_\tau}]
\\
&\leq \EE_x[Z^x_\tau],
\end{align*}
where the first inequality follows from \eqref{truc}, the first equality follows from the strong Markov property, the second equality follows from the fact that $\Lambda^j_\tau=0$ whenever $\Gamma^j \circ \theta_\tau$ has a jump at time $0$, which allows us to replace the integral over $[0,\rho\circ \theta_\tau ]$ by an integral over $(0,\rho\circ \theta_\tau]$, and the third equality follows from \eqref{mark}. This concludes the proof of \eqref{eq:Z=hatZ}.

\subsection{Proof of Proposition \ref{prop:compact_metric}}

It is hereafter assumed without explicit mention that $\CM(\CI)$ is endowed with the topology $\vartheta$. The proof consists of three parts.

\paragraph{Metrizability}

We first prove that $\CM(\CI)$ is metrizable. By Urysohn's metrization theorem (see, e.g., \cite[Theorem 4.58]{Folland}), it is sufficient to check that $\CM(\CI)$ is Hausdorff, regular, and second countable.

First, we check that $\CM(\CI)$ is second countable. By definition, the topology $\vartheta$ has a countable subbasis of neighborhoods defined by all the sets $U,V$ of the form
\begin{align*}
U=U_{a,b,c} := \{ m \in \CM(\CI) : m((a,b))> c \} \, \text{ and } \, V=V_{a,b,d}:=\{ m \in \CM(\CI) : m([a,b])< d \}
\end{align*}
for all $a,b \in \CI\cap \QQ$, $c \in [0,\infty)\cap \QQ$, and $d\in \left( (0,+\infty)\cap \QQ \right)  \cup\{\infty\}$. Therefore, $\CM(\CI)$ is second countable.

Next, we check that $\CM(\CI)$ is regular. To this end, let $B$ be a nonempty closed set in $\CM(\CI)$ and $m\in \CM(\CI) \setminus B$. We have to prove that $B$ and $m$ have disjoint neighborhoods. The complement $B^c$ of $B$ is open and thus
\begin{align*}
B^c =\bigcup_{\alpha } \left(  \bigcap_{k=1}^{n_\alpha} O^{\alpha}_k \right) \, \text{ and } \, B =\bigcap_{\alpha } \left(\bigcup_{k=1}^{n_\alpha} (O^{\alpha}_k)^c \right)\hskip -1mm,
\end{align*}
where $\alpha$ ranges through an arbitrary countable set, and each $O^{\alpha}_k$ is of the form $U,V$ above. In particular, there exists $\alpha$ such that $m \in \bigcap_{k=1}^{n_\alpha} O^{\alpha}_k$ and $B\subset B^\alpha :=\bigcup_{k=1}^{n_\alpha} (O^{\alpha}_k)^c $. Therefore, it is sufficient to prove the claim for $B^\alpha$ instead of $B$. Thus assume that $B=\bigcup_{k=1}^{n} (O_k)^c$. In turn, it is sufficient to prove the claim for each $(O_k)^c$ and then take the union of the neighborhoods of each set $(O_k)^c$, and the (finite) intersection of the neighborhoods of $m$. Thus assume that $B=O^c$ with $O$ of the form $U,V$ above. We accordingly distinguish two cases, depending on the form of $O$.

\subparagraph{Case $B=U^c$ with $U=U_{a,b,c}$}

Let $\delta >0$ such that $m((a,b))>c+2\delta$. There exists $(a',b')\subset (a,b)$ such that $m((a',b'))>c+2\delta$ by inner regularity, so that  $U_{a',b',c+2\delta}$ is an open neighborhood of $m$. On the other hand $V_{a',b',c+\delta}$ is an open neighborhood of $B$ as
\begin{align*}
\forall \nu \in B, \; \nu([a',b'])\leq \nu((a,b))\leq c <c+\delta .
\end{align*}
To conclude, notice that $V_{a',b',c+\delta}$ and $U_{a',b',c+2\delta}$ are disjoint.

\subparagraph{Case $B=V^c$ with $V=V_{a,b,d}$}

Notice that $m\notin B$ is equivalent to $m([a,b])< d$, so that $m([a,b])<\infty$. There exists $(a',b') \supset [a,b]$ such that $m([a',b']) <d$ by outer regularity. Thus let $d',d''$ such that  $m([a',b']) <d'<d''<d$, and observe that $B\subset U_{a',b',d''}$ whereas $m\in V_{a',b',d'}$. To conclude, notice that $V_{a',b',d'}$ and $U_{a',b',d''}$ are disjoint.

\vskip 3mm

Therefore, $\CM(\CI)$ is regular.

Finally, we check that $\CM(\CI)$ is Hausdorff. As $\CM(\CI)$ is regular, it is sufficient to prove that singletons are closed. Let $m_0\in \CM(\CI)$ and consider the closed set
\begin{align*}
&C(m_0)
\\
&:= \bigcap_{a,b\in \CI\cap \QQ} \big( \{ m\in \CM(\CI) : m((a,b))\leq m_0((a,b))\} \cap \{m\in \CM(\CI) : m([a,b]) \geq m_0([a,b]) \}\big).
\end{align*}
If $m\neq m_0$, then there exists an interval $(a,b) \subset \CI$ such that $m((a,b))\neq m_0((a,b))$. By inner regularity, we can assume that $a,b\in \QQ$. If $m((a,b))>m_0((a,b))$, then $m\notin C(m_0)$. If $m((a,b))<m_0((a,b))$, then, by inner regularity, there exists an interval $[a',b']\subset (a,b)$ such that $a',b'\in \QQ$ and $m([a',b'])<m_0([a',b'])$, so that $m\notin C(m_0)$. We conclude that $C(m_0)=\{m_0\}$ and hence that singletons are closed. Therefore, $\CM(\CI)$ is Hausdorff.

The proof that $\CM(\CI)$ is metrizable is now complete.

\paragraph{Proofs of Assertions 1--3}

We prove each assertion in turn.

(1) Any open set $O \subset \CI$ can be written as $O=\bigcup_{n \geq 0} O_n$ for some nondecreasing sequence $(O_n)_{n \geq 0}$ such that each $O_n$ is a finite disjoint union of open intervals with rational endpoints. The mapping $m\mapsto m(O_n)$ is lsc as a finite sum of lsc mappings, and the mapping $m\mapsto m(O)$ is lsc as the supremum of lsc mappings.

(2) Any compact set $K\subset \CI$ can be written as $K=\bigcap_{n\geq 0} K_n$ for some nonincreasing sequence $(K_n)_{n \geq 0}$ such that each $K_n$ is a finite disjoint union of compact intervals with rational endpoints. The mapping $m\mapsto m(K_n)$ is usc as a finite sum of usc mappings, and the mapping $m\mapsto m(K)$ is usc as the infimum of usc mappings.

(3) Suppose that $m_n \to m$ in $\CM(\CI)$. If $O \cap e(m)\neq \emptyset$ with $O$ open, then $m(O)=\infty$, and thus $m_n(O)\to \infty$ by point (1). Now, let $\phi \in \CC_c^+(\CI\setminus e(m))$, with support $K$. Because $m(K)<\infty$, by outer regularity, there exists a compact neighborhood $K'$ of $K$ such that $m(K')<\infty$. Then, by point (2), $\limsup_{n \to \infty} m_n(K') \leq m(K')<\infty$. The restrictions of the measures $(m_n)_{n \geq 0}$ to the open set $O':=\mathrm {int} \hskip 0.5mm K'$ are therefore locally finite for any sufficiently large $n$, and by \cite[Lemma 4.1(iv)]{Kallenberg}, converge vaguely to the restriction of $m$ to $O'$, which implies $L_\phi(m_n)\to L_\phi(m)$.

Conversely, suppose that the sequence $(m_n)_{n\geq 0}$ in $\CM(\CI)$ and the measure $m$ satisfy the properties that, for every open set $O$ such that $O \cap e(m) \neq \emptyset$, $m_n(O) \to \infty$, and that, for each $\phi\in \CC_c^+(\CI \setminus e(m))$,  $L_\phi( m_n) \rightarrow L_\phi(m)$. We want to prove that $m_n \to m$ in $\CM(\CI)$. Let $a,b \in \CI\cap \QQ$. If $(a,b) \cap e(m) \neq \emptyset$, then, by the first property, $\liminf_{n \to \infty} m_n((a,b))=\infty = m((a,b))$. If $(a,b)\cap e(m) = \emptyset$, let $(\phi_k)_{k \geq 0}$ be a nondecreasing sequence of continuous functions with compact support in $(a,b)$ with pointwise limit $\indic_{(a,b)}$. Then, by the second property, we have, for each $k$,
\begin{align*}
\liminf_{n \to \infty} m_n((a,b)) \geq \lim_{n \to \infty} L_{\phi_k}(m_n)=L_{\phi_k}(m),
\end{align*}
and thus, by monotone convergence,
\begin{align}
\liminf_{n \to \infty} m_n((a,b)) \geq m((a,b)). \label{cafe}
\end{align}
If $[a,b] \cap e(m) \neq \emptyset$, then $\limsup m_{n \to \infty}([a,b])\leq \infty = m([a,b])$. If $[a,b]\cap e(m) = \emptyset$, then $m([a,b])<\infty$, and there exists $a',b'$ such that $[a,b]\subset (a',b')$ and $m((a',b'))<\infty$ by outer regularity; in particular, $(a',b')\cap e(m)=\emptyset$. Let $(\phi_k)_{k \geq 0}$ be a nonincreasing sequence of continuous functions with compact support in $(a',b')$ and pointwise limit $\indic_{[a,b]}$. Then, by the second property, we have, for each $k$,
\begin{align*}
\limsup_{n \to \infty} m_n([a,b]) \leq \lim_{n \to \infty} L_{\phi_k}(m_n)=L_{\phi_k}(m),
\end{align*}
and thus, by bounded convergence,
\begin{align}
\limsup_{n\to \infty} m_n([a,b]) \leq m([a,b]). \label{clope}
\end{align}
We conclude from (\ref{cafe})--(\ref{clope}) that $m_n \to m$ in $\CM(\CI)$.

\paragraph{Compactness}

We finally prove that $\CM(\CI)$ is compact. As $\CM(\CI)$ is metrizable, it is sufficient to prove that it is sequentially compact, i.e., that any sequence $(m_n)_{n \geq 0}$ has a convergent subsequence. The proof consists of three steps.

\subparagraph{Step 1}

Let $\CB=\{O_1,O_2,...\}$ denote a countable basis of open sets for $\CI$. If $\limsup_{n\to \infty} m_n\linebreak (O_1)=\infty$, then we extract a subsequence $(m^1_n)_{n\geq 0}$ such that $\lim_{n \to \infty} m^1_n(O_1)=\infty$, otherwise we let $(m^1_n)_{n\geq 0}:=(m_n)_{n\geq 0}$. Assuming that the subsequence $(m^k_n)_{n \geq 0}$ for some $k \geq 1$ is constructed, if $\limsup_{n\to \infty} m^k_n(O_{k+1})=\infty$, then we extract a subsequence $(m^{k+1}_n)_{n\geq 0}$ of $(m^k_n)_{n\geq 0}$ such that $\lim_{n \to \infty} m^{k+1}_n(O_{k+1})=\infty$, otherwise we let $(m^{k+1}_n)_{n \geq 0}:=(m^k_n)_{n\geq 0}$. By diagonal extraction, we obtain a subsequence $(m^*_n)_{n \geq 0}$ of $(m_n)_{n \geq 0}$ such that, for each $k \geq 1$, either $\lim_{n \to \infty} m^{*}_n(O_{k})=\infty$ or $\limsup_{n \to \infty} m^*_n(O_k)<\infty$. Now, for all $x\in \CI$, let $D_x : =\{ k\geq 1: x\in O_k\}$, and notice that $\{x\}=\bigcap_{k \in D_x} O_k$. Define then
\begin{align*}
S : =\big\{ x \in \CI : \forall k \in D_x, \,  \lim_{n \to \infty} m^{*}_n(O_{k})=\infty \big\}.
\end{align*}
We claim that $S$ is closed. Indeed, let $(x_p)_{p \geq 0}$ be a sequence in $S$ with limit $x\in \CI$. For each $k \in D_x$, we have $x_p\in O_k$ for any sufficiently large $p$ and thus $k\in D_{x_p}$. Therefore, $\lim_{n \to \infty} m^*_n(O_k)=\infty$ for all $k \in D_x$, which proves that $x\in S$. The claim follows.

\subparagraph{Step 2}

Let $(K_p)_{p\geq0}$ be an increasing sequence of compact sets such that $\bigcup_{p=0}^\infty K_p= \CI\setminus S$. We claim that $ \limsup_{n \to \infty} m^*_n(K_p) <\infty$ for all $p\geq 0$. Indeed, each $x \in K_p$ is such that there exists $k \in D_x$ such that $\limsup_{n \to \infty} m^{*}_n (O_k) <\infty$. These open sets form an open covering of $K_p$, and we may therefore extract a finite open cover $(O_{k_1},\ldots,O_{k_r})$. We conclude that
\begin{align*}
\limsup_{n \to \infty} m^*_n(K_p) \leq \limsup_{n \to \infty} \sum_{t=1}^r m^*_n(O_{k_t}) \leq \sum_{t=1}^r \limsup_{n \to \infty} m^*_n(O_{k_t}) < \infty.
\end{align*}
The claim follows. Because $\limsup_{n\to \infty} m_n(K_1)<\infty$, the restriction of $m^*_n$ to $K_1$ is a finite measure for any sufficiently large $n$. By \cite[Theorem 4.2]{Kallenberg}, it admits a subsequence that converges weakly to some finite measure $\mu_1$ on $K_1$.\footnote{A sequence a finite measures $(\nu_n)_{n \geq 0}$ on some metric space $E$ converge weakly to $\nu$ if $\int_E \phi \, \mathrm d\nu_n \to \int_E \phi \, \mathrm d\nu$ for every bounded and continuous function $\phi: E \to \RR$.} Iterating the process and using diagonal extraction, we can extract a subsequence $(m^{**}_n)_{n \geq 0}$ of $(m^*_n)_{n \geq 0}$ such that, for each $p\geq 0$, the sequence of the restrictions of the measures $(m^{**}_n)_{n \geq 0}$ to $K_p$ converges weakly to some finite measure $\mu_p$ on $K_p$. By construction, the measures $(\mu_p)_{p \geq 0}$ are consistent in the sense that there exists a Radon measure $\mu$ on $\CI\setminus S$ whose restriction to $K_p$ is $\mu_p$ for all $p \geq 0$, and therefore the sequence of the restrictions of the measures $(m^{**}_n)_{n \geq 0}$ to $\CI\setminus S$ converges vaguely to $\mu$.

\subparagraph{Step 3}

Define $m \in \CM(\CI)$ such that $e(m):=S$ and $m|_{\mathcal I \setminus e(m)}:=\mu$. Then, by Assertion 3, the subsequence $(m^{**}_n)_{n \geq 0}$ constructed in Step 2 converges in $\CM(\CI)$ to $m$. Hence the result.

\newpage

\end{document}